\documentclass[11pt]{amsart}
\usepackage{mathrsfs}

\usepackage{latexsym}
\usepackage{amssymb}
\usepackage{amsfonts}

\usepackage{amscd}
\usepackage{bbm}
\usepackage{mathabx}
\usepackage{skak}

\usepackage{color}

\usepackage{hyperref}
\hypersetup{
    colorlinks,
    citecolor=black,
    filecolor=black,
    linkcolor=black,
    urlcolor=black
}

\usepackage[all]{xy}

\mathsurround=4pt

\textwidth=16.8cm
\textheight=23.2cm
\addtolength{\topmargin}{-45pt}
\addtolength{\oddsidemargin}{-2cm}
\addtolength{\evensidemargin}{-2cm}

\sloppy

\newtheorem{proposition}{Proposition}[section]

\newtheorem{lemma}[proposition]{Lemma}
\newtheorem{definition}[proposition]{Definition}
\newtheorem{theorem}[proposition]{Theorem}

\newtheorem{corollary}[proposition]{Corollary}

\newtheorem{example}[proposition]{Example}
\newtheorem{remark}[proposition]{Remark}
\newtheorem{construction}[proposition]{Construction}
\newtheorem{lemma-definition}[proposition]{Lemma-Definition}


\newcounter{tmp}


\def\lto{\longrightarrow}

\def\D{{\mathcal D}}

\def\H{{\mathcal H}}

\def\T{{\mathcal T}}
\def\I{{\mathcal I}}

\def\P{{\mathcal{P}}}

\def\ZZ{{\mathbb Z}}

\def\bR{{\mathbf R}}

\def\bL{{\mathbf L}}

\def\NN{{\mathbb N}}
\def\ZZ{{\mathbb Z}}

\def\Hom{\operatorname{Hom}}

\def\Ker{\operatorname{Ker}\,}

\def\id{{\operatorname{id}}}

\def\codim{{\operatorname{codim}\;}}

\def\kk{{\Bbbk}}

\def\op{\circ}

%
%

\newcommand{\Ho}{{\H^0}}

\newcommand{\SF}{\dS\!\dF\!\operatorname{--}\!}
\newcommand{\SFf}{\dS\!\dF_{fg}\!\operatorname{--}\!}
\newcommand{\prfdg}{\mathscr{P}\!\mathit{erf}\!\operatorname{--}}

\newcommand{\Ac}{\dA\!\mathit{c}\!\operatorname{--}\!}

\newcommand{\prf}{\operatorname{perf}\!\operatorname{--}}

\def\dA{\mathscr A}
\def\dB{\mathscr B}
\def\dC{\mathscr C}

\def\dE{\mathscr E}

\def\dF{\mathscr F}

\def\dR{\mathscr R}
\def\dS{\mathscr S}

\def\Mod{{\mathscr M}\!\mathit{od}\!\operatorname{--}\!}

\def\mE{\mathsf E}

\def\mF{\mathsf F}
\def\mI{\mathsf I}
\def\mJ{\mathsf J}

\def\mM{\mathsf M}
\def\mN{\mathsf N}
\def\mP{\mathsf P}

\def\mS{\mathsf S}
\def\mT{\mathsf T}

\def\mf{\mathsf f}

\def\mPhi{\mathsf \Phi}

\def\m0{\mathsf 0}

\def\dHom{\mathsf{Hom}}
\def\dEnd{\mathsf{End}}

\def\rd{{J}}
\def\rdi{\mJ_{-}}
\def\rde{\mJ_{+}}

{\endgroup\hfill$\Box$}

{\endgroup\hfill$\Box$}

\def\hy{\mbox{-}}


\def\gR{R}
\def\gS{S}
\def\gM{M}
\def\gN{N}
\def\gHom{\Hom}

\def\La{\Lambda}

\def\da{d_{\dA}}
\def\db{d_{\dB}}
\def\dc{d_{\dC}}

\def\dr{d_{\dR}}
\def\ds{d_{\dS}}

\def\cf{\mathrm{cf}}

\def\bone{{\mathbf 1}}
\def\btwo{{\mathbf 2}}
\def\rAnn{{\operatorname{rAnn}}}
\def\pd{{\operatorname{pd}}}

\def\Chi{\mathtt X}
\def\mod{\operatorname{mod}\!}

\def\fd{\mathsf{cf}}

\def\SL{\operatorname{SL}}

\title[]{Smooth DG algebras and twisted tensor product}

\author[]{Dmitri Orlov}

\address{ Algebraic Geometry Dept., Steklov Math. Institute RAS,
8 Gubkin str., Moscow 119991, RUSSIA}
\email{orlov@mi-ras.ru}

\thanks{This work is supported by Russian Science Foundation under  grant 19-11-00164, \href{https://rscf.ru/project/19-11-00164/}{https://rscf.ru/project/19-11-00164}}

\date{}
\dedicatory{Dedicated to the blessed memory of Igor Rostislavovich Shafarevich on the occasion of his 100th birthday}

\keywords{Noncommutative algebraic geometry, differential graded algebras,  perfect modules}

\makeatletter
\@namedef{subjclassname@2020}{\textup{2020} Mathematics Subject Classification}
\makeatother
\subjclass[2020]{14A22, 16E45, 16P10, 16E35, 18G80}

\begin{document}

\begin{abstract}
In this paper, twisted tensor product of DG algebras is studied and sufficient conditions for smoothness of such a product are given. It is shown that in the case of finite-dimensional DG algebras, applying this operation offers great possibilities for constructing new examples of smooth DG algebras and algebras. In particular, examples are given of families of algebras of finite global dimension with two simple modules that have nontrivial moduli spaces.

\end{abstract}

\maketitle


\section*{Introduction}

The main objects of study in algebraic geometry are smooth projective varieties. The property of being projective, or more precisely of being proper, can be easily seen on the level  of the category of perfect complexes on a variety. Namely, morphisms between perfect complexes in this case are finite-dimensional vector spaces. This property is generalized to noncommutative varieties and to differential graded (DG) algebras directly. Another important property is the property of smoothness. Smoothness, like the property of being regular, is also fundamental and can be extended both to noncommutative varieties and to DG algebras.

The main goal of this paper is to study  smooth finite-dimensional algebras and DG algebras. For finite-dimensional DG algebras, the property of properness is already fulfilled.
Regularity (or smoothness) for finite-dimensional algebras is in fact equivalent to the finiteness of the global dimension with the additional separability property for the semisimple part if we are talking about smoothness.  We show that these properties can also be extended to DG algebras.

A natural question arises: what kind of algebras or DG algebras of finite global dimension do we know and how to construct them?
Algebras of finite global dimension can be naturally obtained from a directed quiver $Q$ with arbitrary relations $I.$
Recall that a quiver $Q$ is said to be directed if a total order is given on its (finite) set of vertices, and for any arrow the target vertex   is strictly greater than the source vertex.
Any algebra of the form $A=\kk Q/I,$ where $\kk Q$ is the path algebra of the quiver $Q$ over a field $\kk,$ and $I$ is an ideal of relations, has  finite global dimension.
In this case, the category of perfect complexes $\prf A,$ which is equivalent to the derived category of finite-dimensional modules $\D^b(\mod-A),$ is a triangulated category with a full exceptional
collection. From the point of view of DG categories, the DG category of perfect complexes $\prfdg A$ can be obtained as a gluing of several categories of the form $\prfdg \kk$ via perfect bimodules.
Such categories always turn out to be smooth. And thus, the procedure of gluing smooth categories via perfect bimodules is the main operation for obtaining new smooth categories (see \cite{Or16,Or19,Or20}).

However, it is important to study more general algebras and DG-algebras of finite global dimension: in particular, those that cannot be obtained by the gluing procedure on the level of the category of perfect complexes.
Algebras of such  type with two simple modules appeared in \cite{G}, and it was shown  in \cite{Ha,MH} that the categories of perfect complexes for these algebras do not have full exceptional collections.
Another series of algebras of finite global dimension with two simple modules was constructed in \cite{KK}. One of the interesting features of these algebras is that their nilpotency index is equal to 4, while the global dimension can be arbitrarily large.

The question also arises of finding other operations that allow one to construct smooth algebras and DG algebras, starting from known and elementary smooth algebras. A well-known example of such an operation is  the usual tensor product. The tensor product of two smooth DG $\kk$\!--algebras
$\dA\otimes_{\kk}\dB$ is also smooth (see \cite{Lu}). However, this operation has strong limitations and naturally increases the rank of the semisimple part. Moreover, it can be defined only over a central subring.

In this paper we consider the operation of twisted tensor product of noncommutative algebras and DG algebras over various subalgebras that do not belong to the center, and apply this operation to the construction of new smooth finite-dimensional algebras and DG algebras. Note that the twisted tensor product has appeared in the literature before. For example, \cite{CSV} defined twisted tensor product over a central subring, and, in \cite{Du}, for a pair of finite-dimensional algebras with a common semisimple part,  a certain operation was introduced, which should be understood as a twisted tensor product of these algebras over a semisimple subalgebra.

In this paper, we give sufficient conditions for a (DG) twisted tensor product of DG algebras to be  smooth (see Theorems \ref{DGprod} and \ref{DGfinpr}).
In the case where these DG algebras admit augmentations, one can present an explicit twisted map for them that defines a certain twisted tensor product of these DG algebras and allows one to construct new examples of smooth DG algebras (see (\ref{vtwist}) from Construction \ref{Mtwist}). In section \ref{ExampAlg}, we consider examples of algebras of finite global dimension with two simple modules and show how they can be obtained as twisted tensor products. Furthermore, we give new examples of families of algebras with two simple modules that have finite global dimension (see Theorem \ref{fgdim}) and show that the categories of perfect modules over these algebras do not have exceptional objects (see Corollary \ref{noexc}). In contrast to already known algebras of such type, these families of algebras have nontrivial moduli spaces. We also show that all these algebras can be obtained by a sequence of twisted tensor products of elementary smooth algebras (see Theorem \ref{TwProd}). In the last section, we consider the Grothendieck groups of smooth DG algebras and show that any matrix in $\SL(n,\ZZ)$ can be realized as the matrix of the Euler bilinear form  (\ref{biform}) for some smooth DG algebra (see Corollary \ref{anymatr}).

The author is very grateful to Anton Fonarev and Alexander Kuznetsov for useful discussions and valuable comments.
\medskip

\section{Preliminaries}

\subsection{Differential graded algebras}

Let $\kk$  be a field. Recall that  a {\sf differential graded $\kk$\!--algebra (=DG
algebra)} $\dR=(\gR, \dr)$ is  a $\ZZ$\!--graded associative $\kk$\!-algebra
$
\gR =\bigoplus_{q\in \ZZ} R^q
$
endowed with a $\kk$\!-linear differential $\dr: \gR \to \gR$  (i.e. homogeneous
map $\dr$ of degree 1 with $\dr^2 = 0$) that satisfies the graded Leibniz rule
\[
\dr(xy) = \dr (x) y + (-1)^q x \dr (y) \quad \text{for all}\quad  x\in R^q, y\in \gR.
\]
We consider DG algebras with an identity element $1\in R^0.$ In this case, $\dr(1)=0.$

Notice that any ordinary associative $\kk$\!--algebra $\La$ can be considered as a DG algebra $\dR$ with
$R^0=\La$ and $R^q = 0,$ when $q \ne 0.$

A {\sf differential graded module $\mM$ over  $\dR$ (=DG $\dR$\!--module)} is a $\ZZ$\!-graded right
$\gR$\!-module
$
\gM = \bigoplus_{q\in\ZZ} M^q
$
endowed with a $\kk$\!-linear differential $d_{\mM}: \gM \to \gM$ of degree 1 for which $d_{\mM}^2=0$ satisfying
the graded Leibniz rule, i.e.
\[
d_{\mM}(mr) = d_{\mM}(m) r + (-1)^q m \dr( r) , \quad\text{for all}\quad m\in M^q, r\in \gR.
\]

Let $\mM$ and $\mN$ be two DG modules. We can define a complex of $\kk$\!--vector spaces $\dHom_{\dR} (\mM, \mN)$
as the graded vector space
\begin{equation*}
\gHom_{\gR}^{gr} (\gM, \gN):=\bigoplus_{q\in\ZZ}\Hom_{\gR} (\gM, \gN)^q,
\end{equation*}
where $\Hom_{\gR} (\gM, \gN)^q$ is the space of homogeneous homomorphisms of $\gR$\!--modules of degree $q.$
The differential $D$ of the complex $\dHom_{\dR} (\mM, \mN)$ is defined by the following rule
\begin{equation*}
D(f) = d_{\mN} \circ f - (-1)^q
f\circ d_{\mM}\quad\text{ for each}\quad f\in \Hom_{\gR} (\gM, \gN)^q.
\end{equation*}

Thus all (right) DG $\dR$\!--modules form a DG category
$\Mod \dR.$ Let $\Ac\dR$ be the full
DG subcategory consisting of all acyclic DG modules, i.e. DG modules with trivial cohomology.
The
homotopy category $\Ho(\Mod\dR)$ has a natural structure of a triangulated category,
and the homotopy subcategory of acyclic complexes $\Ho (\Ac\dR)$ forms a full triangulated subcategory in it.
The {\sf derived
category} $\D(\dR)$ is defined as the Verdier quotient
\[
\D(\dR):=\Ho(\Mod\dR)/\Ho (\Ac\dR).
\]

It is well-known that the derived category $\D(\dR)$ is equivalent to the homotopy category $\Ho(\SF\dR),$
where $\SF\dR\subset\Mod\dR$ is the DG subcategory of semi-free modules.
Recall that a DG module
$\mP$ is called {\sf semi-free} if it has a filtration
$0=\mPhi_0\subset \mPhi_1\subset ...=\mP=\bigcup \mPhi_n$
with free quotients  $\mPhi_{i+1}/\mPhi_i$ (see \cite{Ke}).
We will also need notions of semi-projective and semi-flat modules.

\begin{definition}\label{spr} A DG $\dR$\!--module $\mM$ is called {\sf semi-projective (DG projective)} if the following equivalent conditions hold:
\begin{itemize}
\item[1)] the DG functor $\dHom_{\dR}(\mM, -)$ preserves surjective quasi-isomorphisms,
\item[2)] $\mM$ is projective as an $R$\!--module and $\dHom_{\dR}(\mM, -)$ preserves quasi-isomorphisms,
\item[3)] $\mM$ is a direct summand of some semi-free DG $\dR$\!--module.
\end{itemize}
\end{definition}

\begin{definition}\label{sflat} A DG $\dR$\!--module $\mM$ is called {\sf semi-flat (DG flat)} if the following equivalent conditions holds:
\begin{itemize}
\item[1)] the DG functor $\mM\otimes_{\dR}(-)$ preserves injective quasi-isomorphisms,
\item[2)] $\mM$ is flat as an $R$\!--module and the DG functor $\mM\otimes_{\dR}(-)$ preserves quasi-isomorphisms.
\end{itemize}
\end{definition}

It is easy to see that any semi-projective module is semi-flat, and the homotopy category of semi-projective module is equivalent to the category $\Ho(\SF\dR)\cong \D(\dR).$

\subsection{Categories of perfect modules and functors}

Denote by $\SFf\dR\subset \SF\dR$ the full DG subcategory of finitely generated semi-free
DG modules, i.e. such semi-free DG modules that $\mPhi_n=\mP$ for some $n,$ and $\mPhi_{i+1}/\mPhi_i$ is a finite direct sum of
$\dR[m].$
The {\sf DG category of perfect modules} $\prfdg\dR$
is the full DG subcategory of $\SF\dR$ consisting of all DG modules that are isomorphic to direct summands of objects of $\SFf\dR$
in the homotopy category $\Ho(\SF\dR).$
The homotopy category $\Ho(\prfdg\dR),$ which we denote by $\prf\dR,$  is called the triangulated category of perfect modules.
It is equivalent to the triangulated subcategory of compact objects $\D(\dR)^c\subset \D(\dR)$ (see \cite{Ke}).
In other words, the category $\prf\dR$ is the subcategory in $\D(\dR)$  that is (classically) generated by the DG algebra $\dR$ itself in the following sense.

\begin{definition}
A set $S$ of objects  of a triangulated category $\T$ {\sf (classically) generates} $\T$
if the smallest full triangulated subcategory of
$\T$ containing $S$ and  closed under taking direct summands coincides with the whole category $\T.$
In the case where the set $S$ consists of a single object $E\in \T,$
the object $E$ is called a {\sf classical generator} of $\T.$
\end{definition}

A classical generator, which  generates a triangulated category in a finite number of steps, is called
a {\sf strong generator}.
More precisely, let $\I_1$ and $\I_2$ be two full subcategories of a triangulated category $\T.$ Denote by $\I_1*\I_2$ the full subcategory of $\T$
consisting of all objects $M$ for which there is an exact triangle $M_1\to M\to M_2$ with $M_i\in \I_i.$
For any subcategory $\I\subset\T$ denote by $\langle \I\rangle$ the smallest full subcategory of $\T$ containing $\I$ and closed under
finite direct sums, direct summands and shifts. We put $\I_1 \diamond\I_2=\langle \I_1*\I_2\rangle$ and  define by induction
$\langle \I\rangle_k=\langle\I\rangle_{k-1}\diamond\langle \I\rangle.$ If $\I$ consists of a single object $E,$ we denote $\langle \I\rangle$ as
 $\langle E\rangle_1$ and put by induction $\langle E\rangle _k=\langle E\rangle_{k-1}\diamond\langle E\rangle_1.$
\begin{definition}
An object $E\in\T$ is called  a {\sf strong generator} if $\langle E\rangle_n=\T$ for some $n\in\NN.$
\end{definition}

It is easy to see that if a triangulated category has a strong generator, then all of
its classical generators are  also strong.

Let $\dR$ and $\dS$ be two DG algebras and let  $\mf:\dR \to \dS$ be a morphism of DG algebras.
It induces the restriction DG functor
$
\mf_*:\Mod\dS\lto \Mod\dR
$
between the DG categories of DG modules.
The restriction functor $\mf_*$ has left and right adjoint functors $\mf^*, \mf^{!}$ that are defined as follows:
\[
\mf^*\mM=\mM\otimes_{\dR} \dS,\quad \mf^{!}=\dHom_{\Mod\dR}(\dS, \mM).
\]
The DG functor $\mf_*$ preserves acyclic DG modules and induces a derived functor $\bR f_*: \D(\dS)\to \D(\dR),$
and the DG functor $\mf^*$ preserves semi-free   DG modules.
Existence of semi-free resolutions allows us to a define derived functor
$\bL f^*$ from $\D(\dR)$ to $\D(\dS)$ (see \cite{Ke}). For example, the derived functor $\bL f^*: \D(\dR)\to \D(\dR)$ is isomorphic to the induced homotopy functor
$\H^0(\mf^*)$ for the extension DG functor $\mf^*: \SF\dR\to\SF\dS.$

More generally, let $\mT$ be an
$\dR\hy\dS$\!--bimodule, that is (by definition) a DG-module over $\dR^{\op}\otimes_{\kk}\dS.$
For each DG $\dR$\!--module $\mM$ we obtain a DG $\dS$\!--module
$\mM\otimes_{\dR} \mT.$
The DG functor $(-)\otimes_{\dR} \mT: \Mod\dR \to \Mod\dS$ admits a right adjoint
$\dHom_{\dS} (\mT, -).$
These functors induce an adjoint pair of  derived functors
$(-)\stackrel{\bL}{\otimes}_{\dR}\mT,$ and
$\bR \Hom_{\dS} (\mT, -)$
between the derived categories $\D(\dR)$ and
$\D(\dS)$ (see \cite{Ke}).

\subsection{Properties of DG algebras}
Let us now discuss some basic properties of DG algebras and categories of perfect modules over DG algebras.
\begin{definition}\label{propdef} Let $\dR$ be a DG $\kk$\!--algebra. Then
\begin{itemize}
\item[(1)] $\dR$ is called {\sf proper} if the cohomology algebra $\bigoplus_{p\in\ZZ}H^p(\dR)$ is finite-dimensional.
\item[(2)] $\dR$ is called {\sf $\kk$\!--smooth} if  it is perfect as a DG bimodule, i.e. as a DG module over the DG algebra $\dR^{\op}\otimes_{\kk}\dR.$
\item[(3)] $\dR$ is called {\sf regular} if $\dR$ is a strong generator for  the triangulated category $\prf\dR.$
\end{itemize}
\end{definition}

All these properties are properties of the DG category $\prfdg\dR.$ It is easy to see that $\dR$ is proper if and only if
$\bigoplus_{m\in\ZZ}\Hom(X, Y[m])$ is finite-dimensional for any two objects $X, Y\in\prf\dR.$ It was proved in \cite{LS} that  smoothness is invariant under Morita equivalence, i.e.
it is a property of the DG category $\prfdg\dR.$ It is also known that
any smooth DG algebra is regular (see \cite{Lu}).

\begin{definition}
A DG $\dR$\!--module $\mM$ is called {\sf cohomologically finite-dimensional} if it is perfect as a complex of $\kk$\!--vector spaces, i.e.
$\bigoplus_{p\in\ZZ}H^p(\mM)$ is a finite-dimensional vector space. Denote by $\D_{\fd}(\dR)\subset \D(\dR)$ the full triangulated subcategory of cohomologically finite-dimensional DG modules.
\end{definition}

It is obvious that $\prf\dR\subseteq \D_{\fd}(\dR)$ for any proper DG algebra $\dR.$ On the other hand, if $\dR$ is smooth, then there is the opposite inclusion $\D_{\fd}(\dR)\subseteq \prf\dR$
(see, e.g., \cite{KS}). Thus, for any smooth and proper DG algebra $\dR$ we have an equivalence $\D_{\fd}(\dR)\cong \prf\dR.$  Moreover, the following proposition
is a particular case of Theorem 1.3 from \cite[Th. 1.3]{BV}.
\begin{proposition}\cite{BV}\label{BonVdB}
Let  $\dR$ be a proper and regular DG algebra. Then $\D_{\fd}(\dR)\cong\prf\dR.$
\end{proposition}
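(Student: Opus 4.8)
The statement is: for a proper and regular DG algebra $\dR$, one has $\D_{\fd}(\dR)\cong\prf\dR$. Since $\dR$ is proper, the inclusion $\prf\dR\subseteq\D_{\fd}(\dR)$ is already noted in the excerpt, so the content is the reverse inclusion $\D_{\fd}(\dR)\subseteq\prf\dR$; that is, every cohomologically finite-dimensional DG module is perfect. The plan is to show that $\prf\dR$ and $\D_{\fd}(\dR)$ each admit a strong generator and that these generators generate each other in finitely many steps, so the two thick subcategories of $\D(\dR)$ coincide.

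First I would record the key consequence of regularity: $\dR$ is a strong generator of $\prf\dR$, so there is an integer $m$ with $\langle\dR\rangle_m=\prf\dR$. Next I would produce a single object $G\in\D_{\fd}(\dR)$ that classically generates $\D_{\fd}(\dR)$ — for instance, a direct sum of the (finitely many, by properness) simple objects of the heart, or more robustly the DG module $\dR/\!\!\sim$ obtained by killing positive cohomology, together with shifts; concretely, one checks that $\D_{\fd}(\dR)$ is generated by the cohomology $\bigoplus_p H^p(\dR)$ viewed as an object of $\D(\dR)$, or simply by $H^0$ of a minimal model. The point is that any $M\in\D_{\fd}(\dR)$ has finitely many nonzero cohomology groups, each finite-dimensional, so by dévissage along the (bounded, finite-length) cohomology $M\in\langle G\rangle_k$ for $k$ bounded in terms of the cohomological amplitude and the lengths — hence $G$ is a classical, indeed strong, generator of $\D_{\fd}(\dR)$.

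The heart of the argument is then to show $G\in\prf\dR$. Here I would invoke that $\dR$ is proper together with regularity: since $\dR\in\prf\dR=\D(\dR)^c$ and $\dR$ is a strong generator, any object built from $\dR$ in finitely many cones and summands — in particular, after showing $H^0(\dR)$ or each simple module lies in the thick closure of $\dR$ — is perfect. Concretely, properness gives that the simple modules have finite-dimensional total $\Hom$-spaces into/out of $\dR$, and regularity (strong generation) lets one resolve each simple module by a finite complex of sums of shifts of $\dR$; this is the standard fact that over a proper regular DG algebra the simples are perfect. Once $G\in\prf\dR$, monotonicity of $\langle-\rangle_k$ gives $\D_{\fd}(\dR)=\langle G\rangle_N\subseteq\prf\dR$ for the appropriate $N$, and combined with $\prf\dR\subseteq\D_{\fd}(\dR)$ we get the equality of subcategories, i.e. the claimed equivalence.

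The main obstacle I anticipate is the step showing that the generator $G$ of $\D_{\fd}(\dR)$ actually lies in $\prf\dR$: this is precisely where regularity (as opposed to mere smoothness) must be used, and it requires carefully turning "$\dR$ is a strong generator of $\prf\dR$" into "the simple objects admit finite perfect resolutions." One must control that the resolution terminates — which is exactly the strong-generation hypothesis — and that finite-dimensionality of cohomology lets dévissage terminate as well; reconciling these two finite bounds (and being careful that this is all happening inside the ambient $\D(\dR)$, using that $\prf\dR$ is thick there) is the delicate part. Everything else is formal manipulation of the operations $*$ and $\diamond$ from the excerpt.
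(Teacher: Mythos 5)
Your plan founders at the dévissage step, and at precisely the point where this proposition is genuinely nontrivial. For an arbitrary DG algebra $\dR$ there is no t-structure on $\D(\dR)$ with an exact forgetful functor, hence no ``heart'' and no simple objects to sum; moreover the cohomology groups $H^p(\mM)$ are modules over $H^*(\dR)$ and are not canonically objects of $\D(\dR)$, so ``$\bigoplus_p H^p(\dR)$ viewed as an object of $\D(\dR)$'' is not even well defined, and an object $\mM\in\D_{\fd}(\dR)$ admits no Postnikov-type filtration with its cohomologies as subquotients. Consequently your claim that $\D_{\fd}(\dR)$ is classically generated by a single finite-dimensional object $G$ is unjustified, and it is exactly the failure mode the paper warns about: in Section 3.3 it is pointed out (citing \cite{Ef}) that a cohomologically finite-dimensional DG module over a finite-dimensional DG algebra need not have a finite-dimensional model, so one \emph{cannot} conclude that $\dR/\mJ_{+}$ generates $\D_{\fd}(\dR)$; that is why the ``respectable'' statements are proved there only for ordinary algebras and for non-positive DG algebras, where the t-structure exists. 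Your argument would at best cover those special cases, while Proposition \ref{BonVdB} concerns arbitrary proper regular DG algebras.

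The second gap is the step ``$G\in\prf\dR$'' (``the simples are perfect over a proper regular DG algebra''), which you invoke as a standard fact: this \emph{is} the content of the proposition. Regularity says every object of $\prf\dR$ is built from $\dR$ in at most $m$ steps; it does not by itself let you resolve, and then truncate a semi-free resolution of, a module not yet known to be perfect, and you give no mechanism for that truncation. The paper does not attempt such an argument at all: Proposition \ref{BonVdB} is quoted as a particular case of Theorem 1.3 of Bondal--Van den Bergh \cite{BV}, i.e.\ the representability theorem. The intended proof is: $\prf\dR$ is Ext-finite (properness), idempotent complete, and has a strong generator (regularity), hence every cohomological functor of finite type on it is representable; applying this to $\Hom_{\D(\dR)}(-,\mM)$ restricted to $\prf\dR$ for $\mM\in\D_{\fd}(\dR)$ yields a perfect module $\mP$ and a map $\mP\to\mM$ inducing isomorphisms on $\Hom$ from all perfect modules, which is then an isomorphism because $\dR$ compactly generates $\D(\dR)$. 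If you want a self-contained proof, you need this representability machinery (or an equivalent), not dévissage along cohomology.
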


However, it should be noted that regularity itself, in contrast to smoothness,  does not imply that any cohomologically finite-dimensional module is perfect.
The simplest example is a gluing of the field $\kk$ with itself via an infinite-dimensional vector space.

\begin{proposition} Let $\dR$ be a DG $\kk$\!--algebra. Then the following conditions are equivalent:
\begin{itemize}
\item[(1)] the DG algebra $\dR$ is smooth and proper;
\item[(2)] the DG algebra $\dR^{\op}\otimes_{\kk} \dR$ is smooth and proper;
\item[(3)] there is an equivalence $\prf(\dR^{\op}\otimes_{\kk}\dR)\cong \D_{\fd}(\dR^{\op}\otimes_{\kk}\dR).$
\end{itemize}
\end{proposition}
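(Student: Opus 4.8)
The plan is to prove the cycle of implications $(1)\Rightarrow(2)\Rightarrow(3)\Rightarrow(1)$, using the characterizations of smoothness and properness recalled above together with the fact (attributed to \cite{Lu}) that the tensor product of two smooth DG algebras over $\kk$ is smooth, and the equivalence $\D_{\fd}(\dR)\cong\prf\dR$ valid for smooth and proper DG algebras. Throughout write $\dR^e:=\dR^{\op}\otimes_{\kk}\dR$ for the enveloping algebra, so that $(\dR^e)^{\op}\otimes_{\kk}\dR^e\cong \dR^e\otimes_{\kk}(\dR^e)^{\op}$, and modules over $\dR^e$ are exactly $\dR\hy\dR$\!--bimodules.

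First, $(1)\Rightarrow(2)$: assume $\dR$ is smooth and proper. Properness of $\dR^e$ is immediate from the Künneth formula, since $H^\bullet(\dR^e)\cong H^\bullet(\dR)^{\op}\otimes_\kk H^\bullet(\dR)$ is finite-dimensional. For smoothness, note that $\dR$ smooth means $\dR\in\prf\dR^e$; since $\dR^{\op}$ is smooth iff $\dR$ is (the opposite of a perfect bimodule is perfect over the opposite enveloping algebra, and $(\dR^e)^{\op}\cong\dR^{e}$ up to the obvious swap), the result of \cite{Lu} on tensor products of smooth DG algebras gives that $\dR^{\op}\otimes_\kk\dR=\dR^e$ is smooth. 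Then $(2)\Rightarrow(3)$ is a direct application of the general principle stated in the excerpt: for any smooth and proper DG algebra $\dS$ one has $\D_{\fd}(\dS)\cong\prf\dS$ (combining $\prf\dS\subseteq\D_{\fd}(\dS)$ from properness with $\D_{\fd}(\dS)\subseteq\prf\dS$ from smoothness, see \cite{KS}); apply this with $\dS=\dR^e$.

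The interesting direction is $(3)\Rightarrow(1)$, and I expect this to be the main obstacle. Here we are given only the equivalence $\prf(\dR^e)\cong\D_{\fd}(\dR^e)$ and must recover smoothness and properness of $\dR$ itself. The key observation is that $\dR$, regarded as a $\dR\hy\dR$\!--bimodule, always lies in $\D_{\fd}(\dR^e)$ precisely when $\dR$ is proper as a complex of $\kk$\!--vector spaces, i.e. when $H^\bullet(\dR)$ is finite-dimensional — but a priori we do not yet know properness. The way around this is to first extract properness of $\dR$: the diagonal bimodule $\dR$ is a classical generator of $\prf\dR^e$ in a suitable sense, or more directly, one uses that $\D_{\fd}(\dR^e)$ is generated by the simple bimodule(s) — concretely, if $\prf\dR^e=\D_{\fd}(\dR^e)$ then $\dR^e$ is at least proper, because $\prf\dR^e\subseteq\D_{\fd}(\dR^e)$ always holds, and the reverse inclusion being an equality forces $\dR^e$ to have finite-dimensional cohomology (otherwise $\dR^e$ itself, which is trivially in $\prf\dR^e$, would witness a cohomologically infinite-dimensional perfect module only if $\dR^e$ is proper — one needs the sharper statement that $\D_{\fd}(\dR^e)=\prf\dR^e$ implies $\dR^e\in\D_{\fd}(\dR^e)$, hence $H^\bullet(\dR^e)$ finite-dimensional, hence $H^\bullet(\dR)$ finite-dimensional by Künneth, i.e. $\dR$ proper). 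Once $\dR$ is known to be proper, the diagonal bimodule $\dR$ is cohomologically finite-dimensional over $\dR^e$, so $\dR\in\D_{\fd}(\dR^e)=\prf(\dR^e)$ by hypothesis, which is exactly the statement that $\dR$ is smooth. This closes the cycle.

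The delicate point to get right in the write-up is the implication "$\prf\dR^e=\D_{\fd}(\dR^e)$ $\Rightarrow$ $\dR^e$ proper": one must argue that if $H^\bullet(\dR^e)$ were infinite-dimensional, then $\dR^e$ as a module over itself, while perfect, would still lie in $\D_{\fd}(\dR^e)$ only if it had finite-dimensional cohomology over $\kk$ — and since a DG algebra's cohomology over $\kk$ is computed by the identity bimodule restricted to $\kk$, this gives a contradiction. Equivalently, and perhaps more cleanly, one invokes that smoothness of $\dR^e$ (which would follow if one first established it) together with $\D_{\fd}(\dR^e)\subseteq\prf\dR^e$ is automatic, so the content of $(3)$ is really the inclusion $\prf\dR^e\subseteq\D_{\fd}(\dR^e)$, and applying this inclusion to the free module $\dR^e\in\prf\dR^e$ yields $H^\bullet(\dR^e)$ finite-dimensional directly. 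I would present the argument in this second form, as it isolates exactly where the hypothesis is used and makes the deduction of properness transparent; the rest is then routine.
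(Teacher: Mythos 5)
Your proposal is correct and follows essentially the same route as the paper: $(1)\Rightarrow(2)$ via K\"unneth and the smoothness of tensor products from \cite{Lu}, $(2)\Rightarrow(3)$ from the equivalence $\D_{\fd}\cong\prf$ for smooth proper (equivalently regular proper) DG algebras, and $(3)\Rightarrow(1)$ by applying the equality first to $\dR^{\op}\otimes_{\kk}\dR$ itself to get properness and then to the diagonal bimodule to get smoothness. You merely spell out in detail the step the paper states tersely as ``if $(3)$ holds, then $\dR$ is proper and perfect as a bimodule,'' so no substantive difference.
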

\begin{proof} If $\dR$ is smooth, then $\dR^{\op}$ and $\dR^{\op}\otimes_{\kk}\dR$ are smooth too (see, e.g., \cite[Lem 3.3]{Lu}). Hence $(1)\Rightarrow (2).$
Further, $(2)\Rightarrow (3)$ by Proposition \ref{BonVdB}, because smoothness of $\dR$ implies regularity by \cite[Lem 3.6]{Lu}.
If $(3)$ holds, then $\dR$ is proper and it is perfect as a bimodule. Therefore, $\dR$ is smooth. Thus $(3)\Rightarrow (1).$
\end{proof}

\begin{definition} Let $\dR$ be a proper DG algebra. We say that $\dR$ has a regular (or smooth) realization if there is a proper and regular (or smooth) DG algebra $\dS$ and
an $\dR\hy\dS$\!--bimodule $\mT$ such that the functor
$
\bL F^*\cong (-)\stackrel{\bL}{\otimes}_\dR \mT: \prf\dR\to \prf\dS
$
is  fully faithful.
\end{definition}
\begin{remark}{\rm
Note that there is an example of a proper DG algebra that does not have a smooth realization. Such an example can be found in Efimov's paper \cite{Ef}
(Theorem 5.4 and Proposition 5.1).
}
\end{remark}

\begin{proposition}\label{regcat} Let $\dR$ be a proper DG algebra.
Assume that $\dR$ has a regular (or smooth) realization.
Then $\dR$ is regular (or smooth) if and only if $\prf\dR\cong \D_{\fd}(\dR).$
\end{proposition}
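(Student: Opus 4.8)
The plan is to prove both directions of the equivalence, treating the regular and smooth cases in parallel (the smooth case follows from the regular one together with the extra separability/bimodule-perfectness bookkeeping, but the structure is the same). Throughout, write $\bL F^*\cong(-)\stackrel{\bL}{\otimes}_\dR\mT:\prf\dR\to\prf\dS$ for the fully faithful functor coming from the assumed realization, where $\dS$ is proper and regular (or smooth).

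First the ``only if'' direction: suppose $\dR$ is regular. Then $\dR$ is proper and regular, so Proposition \ref{BonVdB} applies verbatim and gives $\D_{\fd}(\dR)\cong\prf\dR$. (In the smooth case one may instead invoke the remark in the excerpt that smoothness implies $\D_{\fd}(\dR)\subseteq\prf\dR$, together with the trivial inclusion $\prf\dR\subseteq\D_{\fd}(\dR)$ for proper $\dR$.) So this direction does not even use the realization hypothesis.

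Now the ``if'' direction, which is where the realization is needed. Assume $\prf\dR\cong\D_{\fd}(\dR)$. The key observation is that the fully faithful functor $\bL F^*$ extends to a functor on cohomologically finite-dimensional modules, because $(-)\stackrel{\bL}{\otimes}_\dR\mT$ carries $\D_{\fd}(\dR)$ into $\D_{\fd}(\dS)$: indeed $\mT$, being an $\dR\hy\dS$-bimodule giving a functor $\prf\dR\to\prf\dS$, is perfect as a right $\dS$-module, hence for $\mM\in\D_{\fd}(\dR)$ the object $\mM\stackrel{\bL}{\otimes}_\dR\mT$ is built from finitely many shifts of $\mT$ and thus lies in $\D_{\fd}(\dS)\subseteq\prf\dS$ (the last inclusion by regularity of $\dS$ and Proposition \ref{BonVdB}, or by smoothness of $\dS$). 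Under the hypothesis $\prf\dR\cong\D_{\fd}(\dR)$ this means $\bL F^*$ is a fully faithful embedding of $\prf\dR$ into $\prf\dS$ whose essential image consists of objects already known to lie in $\prf\dS$; what we must deduce is that a property of $\dR$ (being a strong generator of $\prf\dR$, resp.\ being perfect as a bimodule) is inherited from $\dS$.

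The argument for regularity: $\dS$ regular means $\dS$ is a strong generator of $\prf\dS$, say $\langle\dS\rangle_n=\prf\dS$. Since $\bL F^*$ is fully faithful and exact, it identifies $\prf\dR$ with a full triangulated subcategory $\C\subseteq\prf\dS$ closed under direct summands. The object $\bL F^*(\dR)=\mT$ classically generates $\C$ (because $\dR$ classically generates $\prf\dR$), and every object of $\C$, being an object of $\prf\dS=\langle\dS\rangle_n$, is obtained from $\dS$ in at most $n$ steps; but $\dS$ itself, as an object of $\prf\dR$'s image is... here is the one genuine subtlety: $\dS\notin\C$ in general, so one cannot directly say $\langle\dS\rangle$-steps inside $\prf\dS$ bound $\langle\mT\rangle$-steps inside $\C$. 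The way around this is the standard fact that on a full triangulated subcategory closed under summands, if $E\in\C$ classically generates $\C$ and $\C\subseteq\langle G\rangle_n$ for some ambient strong generator $G$, then $E$ strongly generates $\C$ in a number of steps bounded in terms of $n$ and the number of steps needed to build $G$ from $E$ within $\prf\dS$ — more precisely, one uses that $\dR$ generates $\prf\dR$ classically, hence $\dS\in\langle\mT\rangle_m$ for some $m$ (as $\dS\in\prf\dS$ and $\prf\dS$ is classically generated by $\mT$... no — $\prf\dS$ is generated by $\dS$, not by $\mT$, unless $\bL F^*$ is essentially surjective). The clean statement I will actually use, which holds without essential surjectivity, is: a classical generator of a triangulated category that embeds as a full thick subcategory of a category with a strong generator is itself a strong generator — this is because any classical generator $E$ of a category $\C$ with the property that $\C$ sits inside $\langle G\rangle_n$ satisfies $\langle E\rangle_k=\C$ for some $k$, since $\langle E\rangle_1\diamond\langle E\rangle_1\diamond\cdots$ exhausts $\C$ in finitely many steps exactly when every object of $\C$ has bounded $\langle G\rangle$-level and $E$ generates classically; the bound transfers because the operations $*$ and $\langle-\rangle$ are compatible with the inclusion $\C\hookrightarrow\prf\dS$. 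Thus $\dR$ is a strong generator of $\prf\dR$, i.e.\ $\dR$ is regular.

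The hard part, and the main obstacle, is exactly this transfer-of-strong-generation step: controlling the number of cone-and-summand steps inside the subcategory $\C=\bL F^*(\prf\dR)$ in terms of the strong generation of $\prf\dS$, given that $\C$ need not contain the generator $\dS$. I expect the author handles it by the observation that $\dR\in\prf\dR$ classically generates, so some shift-sum-summand of $\mT^{\oplus}$ together with finitely many cones produces any object of $\C$; combined with $\C\subseteq\langle\dS\rangle_n\subseteq\langle\mT\rangle_{n'}$ inside $\prf\dS$ — wait, this last inclusion $\langle\dS\rangle_n\subseteq\langle\mT\rangle_{n'}$ does require $\dS\in\langle\mT\rangle$, which is false. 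So the actual mechanism must be: use Proposition \ref{regcat}'s hypothesis the other way, namely that $\prf\dR\cong\D_{\fd}(\dR)$ forces $\bL F^*:\D_{\fd}(\dR)\to\D_{\fd}(\dS)$, and since $\D_{\fd}(\dS)=\prf\dS$ (regularity of $\dS$) this is $\prf\dR\cong\C\subseteq\prf\dS$; now a strong generator $\dS$ of $\prf\dS$ restricts: every object of $\C$ lies in $\langle\dS\rangle_n^{\prf\dS}$, and the adjoint $\bR F_*$ (restriction along the bimodule $\mT$, which preserves $\D_{\fd}$ since $\mT$ is perfect as a left $\dR$-module) sends $\dS$ to an object of $\D_{\fd}(\dR)=\prf\dR$ which is then a strong generator's image — concretely $\bR F_*\bL F^*\cong\id$ on $\prf\dR$ by full faithfulness, so applying the exact functor $\bR F_*$ to $\C\subseteq\langle\dS\rangle_n^{\prf\dS}$ yields $\prf\dR=\langle\bR F_*\dS\rangle_n$, and $\bR F_*\dS\in\prf\dR$ is then a strong generator of $\prf\dR$; but any strong generator makes all classical generators strong (remark in the excerpt), and $\dR$ is a classical generator of $\prf\dR$, hence $\dR$ is a strong generator, i.e.\ $\dR$ is regular. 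For the smooth case one additionally notes that smoothness of $\dR$ is equivalent to $\dR\in\prf(\dR^{\op}\otimes_\kk\dR)$; one applies the regular case to conclude regularity and then uses the realization on the bimodule level — $\dA:=\dR^{\op}\otimes_\kk\dR$ together with $\dS^{\op}\otimes_\kk\dS$ and the bimodule $\mT^{\op}\boxtimes\mT$ — plus $\prf\dA\cong\D_{\fd}(\dA)$ (which follows from the stated equivalence and properness) to place $\dR$ in $\prf\dA$, giving smoothness.
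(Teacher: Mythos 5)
Your ``only if'' direction and, after the false starts, your ``if'' direction for regularity land on essentially the paper's own argument: since $\mT=\bL F^*(\dR)\in\prf\dS$, the right adjoint $\bR\Hom_{\dS}(\mT,-)$ carries $\D_{\fd}(\dS)$ into $\D_{\fd}(\dR)$, hence (using $\prf\dS\cong\D_{\fd}(\dS)$ from regularity of $\dS$ and the hypothesis $\D_{\fd}(\dR)\cong\prf\dR$) gives a projection $\prf\dS\to\prf\dR$ with $\bR F_*\circ\bL F^*\cong\id$; the image $\bR\Hom_{\dS}(\mT,\dS)$ of the strong generator $\dS$ is then a strong generator of $\prf\dR$, so the classical generator $\dR$ is strong and $\dR$ is regular. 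Two caveats: your first attempted route (transferring strong generation to an arbitrary thick subcategory without using the adjoint) indeed does not work, as you yourself noticed; and your parenthetical reason for $\bR F_*$ preserving $\D_{\fd}$ --- ``$\mT$ is perfect as a left $\dR$-module'' --- is not the relevant fact: what is used is exactly $\mT\in\prf\dS$, i.e.\ perfectness as a right DG $\dS$-module, which you had stated correctly a few lines earlier.

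The genuine gap is in the smooth case. You pass to $\dA=\dR^{\op}\otimes_{\kk}\dR$ and assert that $\prf\dA\cong\D_{\fd}(\dA)$ ``follows from the stated equivalence and properness''. It does not: by the proposition in the paper immediately following Proposition \ref{BonVdB}, the equivalence $\prf(\dR^{\op}\otimes_{\kk}\dR)\cong\D_{\fd}(\dR^{\op}\otimes_{\kk}\dR)$ is \emph{equivalent} to $\dR$ being smooth and proper, so this step assumes the conclusion; nothing in the hypothesis $\prf\dR\cong\D_{\fd}(\dR)$ controls cohomologically finite-dimensional bimodules, which need not be built from external products. The paper avoids this entirely: the projection $\bR\Hom_{\dS}(\mT,-):\prf\dS\to\prf\dR$ already constructed in the regular case shows that $\bL F^*$ embeds $\prf\dR$ as an admissible subcategory of the smooth category $\prf\dS$, and smoothness then descends by \cite[3.24]{LS} (the same mechanism as in Proposition \ref{newreg}). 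Replacing your bimodule-level step by this admissibility argument completes the proof; as written, the smooth half of your proposal is circular.
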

\begin{proof} In one direction, this  follows from  Proposition \ref{BonVdB}.

Assume now that $\prf\dR\cong \D_{\fd}(\dR),$ and consider a regular (or smooth) realization
\[
\bL F^*\cong (-)\stackrel{\bL}{\otimes}_\dR \mT: \prf\dR\hookrightarrow \prf\dS.
\]
Since the DG algebra $\dS$ is regular (or smooth), we also have an equivalence
$\prf\dS\cong \D_{\fd}(\dS).$ We know that $\mT=\bL F^*(\dR)\in \prf\dS.$ Hence, the  right adjoint functor
$
\bR \Hom_{\dS} (\mT, -): \D(\dS)\to \D(\dR)
$
sends $\D_{\fd}(\dS)$ to $\D_{\fd}(\dR).$ Therefore, we obtain a projection
\[
\bR \Hom_{\dS} (\mT, -): \prf\dS\to \prf\dR.
\]
If now $\dS$ is regular, then $\dS$ is a strong generator for $\prf\dS.$ Hence, the object $\bR \Hom_{\dS} (\mT, \dS)$ is a strong generator
for $\prf\dR,$ and $\dR$ is regular too. If, in addition, $\dS$ is smooth, then $\dR$ is also smooth by \cite[3.24]{LS}.
\end{proof}

\section{Morphisms of DG algebras and twisted tensor product}

\subsection{Morphisms of DG algebra and smoothness}
Let now $\mf:\dR \to \dS$ be a morphism of DG algebras. As above, it produces the derived functors
\[
\bL \mf^*: \D(\dR)\to \D(\dS)
\quad
\text{and}
\quad
\bR\mf_*: \D(\dS)\to \D(\dR),
\]
 which are called the inverse image and the direct image functors, respectively.
These derived functors induce functors
\[
\bL \mf^*: \prf\dR\to \prf\dS
\quad
\text{and}
\quad
\bR\mf_*: \D_{\fd}(\dS)\to \D_{\fd}(\dR).
\]

\begin{definition}\cite[Def.2.8]{Or19}
 A morphism of DG algebras $\mf:\dR \to \dS$   is called a {\sf pp-morphism (perfect proper morphism)}
if the direct image functor $\bR\mf_*: \D(\dS)\to \D(\dR)$ sends perfect modules to perfect ones.
\end{definition}
The previous definition is equivalent to saying that $\dS$ is  perfect as a right DG $\dR$\!--module.
This also means that the inverse image functor $\bL \mf^*$ considered as a functor from $\prf\dR$ to $\prf\dS$ has a right adjoint
$\bR \mf_*: \prf\dS\to\prf\dR.$

\begin{definition} Let $\dR$ be a proper DG algebra.
 A morphism of DG algebras $\mf:\dR \to \dS$  will be called  {\sf respectable}
if the objects of the form $\bR\mf_* \mT$ for $\mT\in\D_{\fd}(\dS),$ generate the whole triangulated category $\D_{\fd}(\dR).$
The morphism $\mf$ will be called {\sf acceptable}, if the subcategory $\D\subseteq\D_{\fd}(\dR),$  generated by the objects $\bR\mf_* \mT$ for $\mT\in\D_{\fd}(\dS),$
contains the category $\prf\dR.$
\end{definition}

The next lemma is almost obvious.
\begin{lemma}\label{ppr} Let $\mf:\dR \to \dS$ be a respectable pp-morphism of DG algebras.
Assume that there is an inclusion  $\D_{\fd}(\dS)\subseteq\prf\dS.$
Then there is an equivalence
$\D_{\fd}(\dR)\cong\prf\dR.$
\end{lemma}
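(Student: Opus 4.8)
The plan is to prove the two inclusions $\prf\dR\subseteq\D_{\fd}(\dR)$ and $\D_{\fd}(\dR)\subseteq\prf\dR$ inside $\D(\dR)$; together they give the asserted equivalence (in fact an equality of full subcategories).

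The first inclusion is free. Since the notion of a respectable morphism is defined only when the source DG algebra is proper, the hypothesis that $\mf$ is respectable forces $\dR$ to be proper, and for a proper DG algebra one always has $\prf\dR\subseteq\D_{\fd}(\dR)$, as noted right after Definition~\ref{propdef}.

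For the reverse inclusion I would use both properties of $\mf$ simultaneously. Because $\mf$ is a pp-morphism, the direct image functor $\bR\mf_*\colon\D(\dS)\to\D(\dR)$ carries $\prf\dS$ into $\prf\dR$; combining this with the standing hypothesis $\D_{\fd}(\dS)\subseteq\prf\dS$ yields
\[
\bR\mf_*\bigl(\D_{\fd}(\dS)\bigr)\ \subseteq\ \bR\mf_*\bigl(\prf\dS\bigr)\ \subseteq\ \prf\dR .
\]
So every object $\bR\mf_*\mT$ with $\mT\in\D_{\fd}(\dS)$ already lies in $\prf\dR$. Now respectability says precisely that the collection of such objects (classically) generates $\D_{\fd}(\dR)$, i.e. $\D_{\fd}(\dR)$ is the smallest full triangulated subcategory of $\D(\dR)$ containing them and closed under direct summands. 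But $\prf\dR$ is itself a full triangulated subcategory of $\D(\dR)$ closed under direct summands --- this is immediate from its description as $\Ho(\prfdg\dR)$, whose objects are summands of finitely generated semi-free modules --- and it contains the entire generating family by the displayed containment. Hence $\D_{\fd}(\dR)\subseteq\prf\dR$, and together with the first inclusion $\D_{\fd}(\dR)=\prf\dR$.

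As the paper already signals, there is no real obstacle here: the statement is a formal manipulation of the definitions. The only point that needs a moment's care is the bookkeeping of closure properties --- one must observe that $\prf\dR$ is closed under shifts, cones, and direct summands, so that containing the family $\{\bR\mf_*\mT\}_{\mT\in\D_{\fd}(\dS)}$ forces it to contain the whole triangulated subcategory these objects generate, namely $\D_{\fd}(\dR)$.
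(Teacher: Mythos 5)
Your proof is correct and is essentially the same as the paper's: you use the pp-property together with the hypothesis $\D_{\fd}(\dS)\subseteq\prf\dS$ to place the generating objects $\bR\mf_*\mT$ inside $\prf\dR$, invoke respectability and the thickness of $\prf\dR$ to get $\D_{\fd}(\dR)\subseteq\prf\dR$, and use properness of $\dR$ for the reverse inclusion. No substantive differences to report.
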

\begin{proof} Since $\mf$ is a pp-morphism, we have a functor $\bR \mf_*: \prf\dS\to\prf\dR.$ If there is an inclusion $\D_{\fd}(\dS)\subseteq\prf\dS,$ then for any object
$\mT\subset \D_{\fd}(\dS),$ the image $\bR \mf_*\mT$  belongs to $\prf\dR.$ By assumption, the morphism $\mf$ is also respectable, i.e. the category $\D_{\fd}(\dR)$ is generated by objects of the form $\bR\mf_* \mT.$
Therefore, we obtain an inclusion $\D_{\fd}(\dR)\subseteq\prf\dR.$ On the other hand, $\dR$ is proper. Thus, we obtain an equivalence $\prf\dR\cong\D_{\fd}(\dR).$
\end{proof}

Consider a commutative diagram of morphisms of DG algebras
\begin{equation}\label{fiber}
\begin{split}
\xymatrix{
\dC \ar[r]^{p_B} & \dB\\
\dA\ar[r]^{\pi_A}\ar[u]^{i_A}& \dR\ar[u]_{\epsilon_B}
}
\end{split}
\end{equation}
satisfying the following two conditions:

\smallskip
\begin{tabular}{ll}
(Res) &
\begin{tabular}{ll}
1. & The DG algebra $\dC$ is semi-flat as a left DG $\dA$\!--module (see Definition \ref{sflat}).\\
2. & The canonical map $\dR\otimes_{\dA}\dC\to\dB$ is a quasi-isomorphism.
\end{tabular}
\end{tabular}
\medskip

The following proposition allows us to deduce smoothness of the DG algebra $\dC$ in diagram (\ref{fiber}).
\begin{proposition}\label{regprop} Let {\rm (\ref{fiber})}  be a commutative diagram of morphisms of proper DG algebras that satisfies conditions {\rm (Res)}.
Suppose the following conditions hold:
\begin{itemize}
\item[(i)] the morphism $\pi_A: \dA\to\dR$ is a pp-morphism,
\item[(ii)] the morphism $p_B: \dC\to\dB$ is respectable,
\item [(iii)] the DG algebra $\dB$ is regular (or smooth).
\end{itemize}
 Then there is an equivalence $\D_{\fd}(\dC)\cong\prf\dC.$ If, in addition, $\dC$ has a regular or (smooth) realization, then $\dC$ is also regular (or smooth).
\end{proposition}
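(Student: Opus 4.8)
The strategy is to produce a morphism of DG algebras out of $\dC$ whose direct image functor detects everything we need, and then invoke Lemma~\ref{ppr}. The natural candidate is the morphism $p_B\colon \dC\to\dB$ itself. First I would check that $p_B$ is a pp-morphism, i.e.\ that $\dB$ is perfect as a right DG $\dC$-module. This is where condition~(Res) together with hypothesis~(i) enters: the quasi-isomorphism $\dR\otimes_{\dA}\dC\xrightarrow{\sim}\dB$ exhibits $\dB$ (as a right $\dC$-module) as $\dR\stackrel{\bL}{\otimes}_{\dA}\dC$, and since $\dC$ is semi-flat over $\dA$ the derived tensor product is computed by the underived one. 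Because $\pi_A\colon\dA\to\dR$ is a pp-morphism, $\dR$ is perfect as a right DG $\dA$-module, so $\dR\otimes_{\dA}\dC$ is obtained from $\dC$ by the same finite process of cones, shifts and summands that builds $\dR$ from $\dA$; hence $\dB\simeq\dR\otimes_{\dA}\dC$ is perfect as a right DG $\dC$-module, which is exactly the pp-property for $p_B$.

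\textbf{Second step.} With $p_B$ a pp-morphism, the direct image restricts to $\bR p_{B*}\colon\prf\dB\to\prf\dC$. By hypothesis~(iii) the DG algebra $\dB$ is regular (and proper, being one of the proper DG algebras in the diagram), so Proposition~\ref{BonVdB} gives $\D_{\fd}(\dB)\cong\prf\dB$; in particular $\D_{\fd}(\dB)\subseteq\prf\dB$. Hypothesis~(ii) says $p_B$ is respectable, i.e.\ the objects $\bR p_{B*}\mT$ for $\mT\in\D_{\fd}(\dB)$ generate $\D_{\fd}(\dC)$. Now Lemma~\ref{ppr}, applied to the respectable pp-morphism $p_B\colon\dC\to\dB$ with the inclusion $\D_{\fd}(\dB)\subseteq\prf\dB$ in hand, yields precisely the equivalence $\D_{\fd}(\dC)\cong\prf\dC$. (Note $\dC$ is proper by assumption, which is the other ingredient Lemma~\ref{ppr} uses.)

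\textbf{Third step.} For the final clause, suppose in addition that $\dC$ admits a regular (or smooth) realization. Then we are exactly in the situation of Proposition~\ref{regcat}: $\dC$ is proper, it has a regular (or smooth) realization, and we have just shown $\prf\dC\cong\D_{\fd}(\dC)$. Proposition~\ref{regcat} then concludes that $\dC$ is regular (or smooth), respecting the two parallel cases.

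\textbf{Main obstacle.} The only genuinely nontrivial point is the first step: verifying carefully that $\dB$ is perfect \emph{as a right DG $\dC$-module}. One must argue that the underived tensor product $\dR\otimes_{\dA}\dC$ already computes $\dR\stackrel{\bL}{\otimes}_{\dA}\dC$ (this is where semi-flatness of $\dC$ over $\dA$ is used, via Definition~\ref{sflat}), and then that applying $(-)\otimes_{\dA}\dC$ to a finite perfect complex built from $\dA_{\dA}$ produces a finite perfect complex built from $\dC_{\dC}$, since $(-)\otimes_{\dA}\dC$ is exact on semi-flat modules and sends $\dA$ to $\dC$. Everything after that is a bookkeeping assembly of the two cited lemmas/propositions.
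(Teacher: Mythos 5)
Your argument is correct and follows essentially the same route as the paper: deduce from (Res) and hypothesis (i) that $\dB\cong\dR\otimes_{\dA}\dC$ is perfect as a right DG $\dC$\!--module (so $p_B$ is a pp-morphism), then apply Lemma~\ref{ppr} to the respectable pp-morphism $p_B$ using $\D_{\fd}(\dB)\subseteq\prf\dB$, and finish with Proposition~\ref{regcat}. Your first step merely spells out in more detail (semi-flat base change along a finitely generated semi-free model of $\dR$ over $\dA$) what the paper states in one sentence.
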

\begin{proof}
We know that $\dR$ belongs to $\prf\dA$ as a right DG $\dA$\!--module because $\pi_A$ is a pp-morphism. Since $\dC$ is semi-flat as a left DG $\dA$\!--module, we obtain that
$\dB\cong \dR\otimes_{\dA}\dC$ is also perfect as a right DG $\dC$\!--module. Hence, $p_B$ is a pp-morphism. Applying Lemma \ref{ppr} to the morphism $p_B,$ we obtain an equivalence
$\D_{\fd}(\dC)\cong\prf\dC.$ If the DG algebra $\dC$ has a regular (or smooth) realization, then, by Proposition \ref{regcat}, it is regular (or smooth) itself.
\end{proof}

\begin{corollary}\label{smoothd}
Let {\rm (\ref{fiber})}  be a commutative diagram of morphisms of proper DG algebras that satisfies conditions {\rm (Res)}.
Suppose the DG algebras $\dA$ and $\dB$ are regular (or smooth) and the morphism $p_B$ is respectable.
Then there is an equivalence $\D_{\fd}(\dC)\cong\prf\dC.$ If, in addition, $\dC$ has a regular or (smooth) realization, then $\dC$ is also regular (or smooth).
\end{corollary}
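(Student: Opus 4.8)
The plan is to reduce the corollary to Proposition \ref{regprop} by checking its three hypotheses (i)--(iii) for the diagram (\ref{fiber}). Two of them are immediate from what is assumed in the corollary: (ii) is precisely the hypothesis that $p_B\colon\dC\to\dB$ is respectable, and (iii) is the hypothesis that $\dB$ is regular (or smooth). So the only thing to verify is (i), namely that $\pi_A\colon\dA\to\dR$ is a pp-morphism, equivalently that $\dR$ is perfect as a right DG $\dA$-module.

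First I would invoke regularity of $\dA$: if $\dA$ is only assumed smooth, then it is in particular regular, since any smooth DG algebra is regular. As $\dA$ is also proper by hypothesis, Proposition \ref{BonVdB} gives an equivalence $\D_{\fd}(\dA)\cong\prf\dA$. Next, since $\dR$ is proper, its cohomology $\bigoplus_{p\in\ZZ}H^p(\dR)$ is a finite-dimensional $\kk$-vector space; restricting along $\pi_A$ does not change the underlying complex of $\kk$-vector spaces, so $\dR$, viewed as a right DG $\dA$-module, is cohomologically finite-dimensional, i.e. $\dR\in\D_{\fd}(\dA)$. Combining the last two facts, $\dR\in\prf\dA$, which is exactly the assertion that $\pi_A$ is a pp-morphism. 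This establishes (i).

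With (i), (ii), (iii) all in hand, Proposition \ref{regprop} applies directly and yields the equivalence $\D_{\fd}(\dC)\cong\prf\dC$, and moreover that $\dC$ is regular (or smooth) as soon as it admits a regular (or smooth) realization. I expect no real obstacle here; the only point requiring a little care is tracking the ``(or smooth)'' alternative: smoothness of $\dA$ is used only through the regularity it implies (so that Proposition \ref{BonVdB} applies to $\dA$), whereas the final conclusion that $\dC$ is genuinely smooth, rather than merely regular, is the ``smooth'' case of Proposition \ref{regprop} and relies on $\dB$ being smooth and on $\dC$ having a smooth realization.
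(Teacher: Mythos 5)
Your proposal is correct and follows essentially the same route as the paper: the paper likewise reduces to Proposition \ref{regprop} by noting that regularity of $\dA$ (implied also by smoothness) together with properness of $\dR$ gives $\dR\in\prf\dA$ via Proposition \ref{BonVdB}, so $\pi_A$ is a pp-morphism. Your write-up merely makes explicit the intermediate observation that $\dR\in\D_{\fd}(\dA)$, which the paper leaves implicit.
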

\begin{proof}
Since $\dA$ is regular (or smooth) and $\dR$ is proper, the DG module $\dR$ belongs to $\prf\dA$ by Proposition \ref{BonVdB}. Thus, $\pi_A$ is a pp-morphism and
 Proposition \ref{regprop} implies the corollary.
\end{proof}

\subsection{Twisted tensor products of algebras}
In this section we consider and study so called twisted tensor products of algebras and DG algebras. Twisted tensor products of algebras over a central subring  were defined in \cite{CSV}.
Some special examples of twisted tensor products of finite-dimensional algebras over a common semisimple part appeared in \cite{Du} (see also \cite{Or21} for DG algebras).
We consider twisted tensor products of noncommutative (DG) algebras over an arbitrary (not necessary commutative) (DG) ring.

Let $R, A, B$ be  $\kk$\!--algebras and  $\epsilon_A: R\to A$ and $\epsilon_B: R\to B$ be morphisms of algebras.
We will say that $A$ and $B$ are $R$\!--rings (or rings over $R$).

\begin{definition}
A {\sf twisted tensor product over $R$} of two $R$\!--rings $A$ and $B$
is an $R$\!--ring $C$ together with two $R$\!--rings morphisms $i_A :
A\to C$ and $i_B : B \to C$ such that the canonical  map $\phi: A\otimes_R B \to C$
defined by $\phi(a\otimes b) := i_A(a)\cdot i_B(b)$ is an isomorphism of $R$\!--bimodules.
\end{definition}

There is a direct way to describe  twisted tensor products.
Let $\phi : A\otimes_R B \to C$ be the canonical isomorphism used in the definition
of the twisted tensor product. Then, we can define $\tau : B\otimes_R A \to A\otimes_R B$ by the rule
$\tau (b \otimes a) :=
\phi^{-1}(i_B(b) \cdot i_A(a)).
$

Conversely, let $\tau: B\otimes_R A\to A\otimes_R B$ be an $R$\!--bilinear map for which
\begin{equation}\label{fixsides}
\tau (1\otimes a)= a\otimes 1, \tau(b\otimes 1)=1\otimes b.
\end{equation}
In this case,  we can  define a multiplication $\mu_{\tau}:=(\mu_A\otimes \mu_B)\circ (1\otimes\tau\otimes 1)$ on the $R$\!--bimodule $A\otimes_R B.$
The multiplication $\mu_{\tau}$ is associative if and only if there is an equality
\begin{equation}\label{twist}
\tau\circ (\mu_B\otimes \mu_A) = (\mu_A\otimes \mu_B) \circ (1\otimes\tau\otimes 1) \circ (\tau \otimes \tau ) \circ (1\otimes\tau\otimes 1)
\end{equation}
of maps from $B\otimes_R B \otimes_R  A \otimes_R A$ to $A \otimes_R B$ (see, e.g., \cite{CSV}).
This means that the diagram
\begin{equation}\label{twisttau}
\begin{split}
\xymatrixcolsep{10pc}\xymatrix{
B\otimes_R B \otimes_R  A \otimes_R A \ar[r]^{(1\otimes\tau\otimes 1) \circ (\tau \otimes \tau ) \circ (1\otimes\tau\otimes 1)}\ar[d]_{\mu_B\otimes \mu_A} & A \otimes_R A\otimes_R B\otimes_R B\ar[d]^{\mu_A\otimes \mu_B}\\
B \otimes_R  A \ar[r]^{\tau} & A\otimes_R B
}
\end{split}
\end{equation}
should  be commutative.

\begin{definition}
An $\dR$\!--bilinear map  $\tau$ that satisfies conditions (\ref{fixsides}) and (\ref{twist}) is called a {\sf twisting map} for $A$ and $B$ over $R,$
and we denote the $R$\!--ring $(A\otimes_R B, \mu_{\tau})$ by $A\otimes_R^{\tau}B.$
\end{definition}

\begin{example}
{\rm Any cyclic division algebra $D_{\zeta}(a, b)= \kk\{x, y\}/\langle x^n-a, y^n-b, yx-\zeta xy\rangle,$ where $a, b, \zeta\in \kk^*$ and $\zeta^n=1,$ can be represented as a twisted tensor product
$\kk(\sqrt[n]{a})\otimes_{\kk}^{\tau}\kk(\sqrt[n]{b})$ with $\tau(y^k\otimes x^l)=\zeta^{kl}\cdot x^l\otimes y^k.$
}
\end{example}
\begin{example}
{\rm
The matrix algebra $M(n, \kk)$ can be represented as a twisted tensor product $(\kk[x]/x^n)\otimes_{\kk}^{\tau} (\kk[y]/y^n)$ for an appropriate twisting map $\tau.$
}
\end{example}

Suppose that the $R$\!--ring $A$ has an $R$\!--augmentation, i.e. a morphism $\pi_A: A\to R$ such that $\pi_A\circ\epsilon_A$ is the identity map.
Denote by $I_A=\Ker\pi_{A} \subset A$ the augmentation ideal.
\begin{definition} Suppose $A$ has an augmentation. A twisted tensor product $A\otimes_R^{\tau}B$ will be called {\sf right fixed (with respect to $\pi_A$)} if $B$ is flat as a left $R$\!--module and
the map $p_B: A\otimes_R^{\tau}B\to B,$  induced by the augmentation $\pi_A: A\to R,$ is a morphism of rings.
\end{definition}

\begin{remark}\label{twistedIdeal}
{\rm
In this case $p_B$ is surjective and the tensor product $I_A\otimes_R B$ is a two-sided ideal as the kernel of the morphism $p_B.$ Therefore, the twisting map $\tau$ should send $B\otimes_R I_A$ to $I_A\otimes_R B.$
}
\end{remark}
\begin{construction}\label{Mtwist}
{\rm Now we give a main example of a twisting map, which we will  use in sequel. Suppose  that both  $R$\!--rings $A$ and $B$ have $R$\!--augmentations
$\pi_A: A\to R$ and $\pi_B: B\to R.$
In such case there is a special twisting map $\mathbf{v}: B\otimes_R A\to A\otimes_R B$ given by the following rule
\begin{equation}\label{vtwist}
\mathbf{v}(b\otimes a)=\epsilon_A(\pi_B(b))\cdot a\otimes 1 + 1\otimes b\cdot \epsilon_B(\pi_A(a))-\epsilon_A(\pi_B(b))\otimes\epsilon_B(\pi_A(a)).
\end{equation}
For this twisted tensor product, we have $(1\otimes b)(a\otimes 1)=\mathbf{v}(b\otimes a)=0,$ whenever $a\in I_A, b\in I_B.$
}
\end{construction}

\subsection{Twisted tensor product of DG algebras}

The notion of a twisted tensor product can be easily extended to the case of DG algebras.
Let $\dR, \dA, \dB$ be  DG $\kk$\!--algebras and  $\epsilon_A: \dR\to \dA$ and $\epsilon_B: \dR\to \dB$ be morphisms of DG algebras.
In such case we will say that the DG algebras $\dA$ and $\dB$ are DG $\dR$\!--rings (or DG rings over $\dR$).

\begin{definition}\label{TwistDGal}
A {\sf twisted tensor product over $\dR$} of two DG $\dR$\!--rings $\dA$ and $\dB$
is a DG $\dR$\!--ring $\dC$ together with two $\dR$\!--rings morphisms $i_A :
\dA\to \dC$ and $i_B : \dB \to \dC$ such that the canonical  map $\phi: \dA\otimes_{\dR} \dB \to \dC$
defined by $\phi(a\otimes b) := i_A(a)\cdot i_B(b)$ is an isomorphism.
\end{definition}

It follows from the definition that the differential of $\dC$ is uniquely determined by the Leibniz rule, because we have
$\dc(a\otimes b)=\da(a)\otimes b+(-1)^{\deg(a)} a\otimes \db(b).$
The twisting map $\tau$ associated with a twisted tensor product of DG rings satisfies conditions (\ref{fixsides}) and (\ref{twist}) and, additionally,
\begin{equation}
\tau(\db(b)\otimes a) + (-1)^{\deg(b)}\tau( b\otimes \db(a)) = \dc( \tau(b\otimes a)),
\end{equation}
which means that the map $\tau: \dB\otimes_{\dR} \dA\to \dA\otimes_{\dR} \dB$ should be a map of DG $\dR$\!--bimodules.

Suppose that the $\dR$\!--ring $\dA$ has an $\dR$\!--augmentation, i.e. a morphism $\pi_A: \dA\to \dR$ such that the composition $\pi_A\circ\epsilon_A$ is the identity map.

\begin{definition} A twisted tensor product $\dA\otimes_{\dR}^{\tau}\dB$ will be called {\sf right fixed (with respect of $\pi_A$)} if $\dB$ is semi-flat as a left $\dR$\!--module and
the natural map $p_B: \dA\otimes_{\dR}^{\tau}\dB\to \dB,$ which  is induced by  $\pi_A: \dA\to \dR,$ is a morphism of DG algebras.
\end{definition}

When we have a right fixed twisted tensor product $\dA\otimes_{\dR}^{\tau}\dB,$ we can also consider a deformation of the differential that preserves the structure morphism
$i_A: \dA\to \dA\otimes_{\dR}^{\tau}\dB$ and the projection $p_B: \dA\otimes_{\dR}^{\tau}\dB\to \dB.$

Let  $\dA$ and $\dB$ be  $\dR$\!--rings such that $\dA$ has an augmentation $\pi_A: \dA\to \dR$ and $\dB$ is semi-flat as the left DG $\dR$\!--module.
Consider underlying algebras $A, B, R$ and let $\tau: B\otimes_{R} A\to A\otimes_{R} B$ be a twisting map such that the twisted tensor product of algebras $A\otimes_{R}^{\tau}B$ is  right fixed.
\begin{definition}
We define a {\sf DG twisted tensor product} $\dC^{\nabla}=\dA\otimes_{\dR}^{\nabla, \tau}\dB$ as a right fixed twisted tensor product of algebras $A\otimes_R^{\tau} B$
with a new differential $d_{\dC^{\nabla}}$ such that $i_A: \dA\to \dC^{\nabla}$ and  $p_B: \dC^{\nabla}\to \dB,$ induced by  $\pi_A: \dA\to \dR,$ are morphisms of DG algebras.
\end{definition}

It follows from the definition that the differential $d_{\dC^{\nabla}}$ has the following properties:
\[
d_{\dC^{\nabla}}(a\otimes 1)=\da(a)\otimes 1 \quad\text{ and}\quad d_{\dC^{\nabla}}(a\otimes b)= \da(a)\otimes b + (-1)^{\deg a} (a\otimes \db(b) + a\cdot\nabla(1\otimes b)),
\]
where $\nabla(1\otimes b) \in I_A\otimes_R B$ and $I_A\subset A$ is the augmentation ideal.

\begin{theorem}\label{DGprod}
Let $\dR$ be a proper DG algebra. Let $\dA$ and $\dB$ be proper DG $\dR$\!--rings  such that $\dA$ has an augmentation $\pi_A: \dA\to \dR$ and $\dB$ is semi-flat as a left DG $\dR$\!--module.
Assume that  the augmentation $\pi_A: \dA\to \dR$ is a pp-morphism, $\dB$ is regular (or smooth). Let $\dC^{\nabla}=\dA\otimes_{\dR}^{\nabla, \tau}\dB$ be a DG twisted tensor product such that the morphism $p_B$ is respectable.
Then, there is an equivalence $\D_{\fd}(\dC^{\nabla})\cong\prf\dC^{\nabla}.$
If, in addition, the DG algebra $\dC^{\nabla}$ has a regular or (smooth) realization, then $\dC^{\nabla}$ is also regular (or smooth).
\end{theorem}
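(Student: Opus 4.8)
The plan is to reduce Theorem \ref{DGprod} to Proposition \ref{regprop} by producing the commutative square (\ref{fiber}) together with the two conditions (Res). First I would set $\dC=\dC^{\nabla}$ and consider the diagram
\[
\xymatrix{
\dC^{\nabla} \ar[r]^{p_B} & \dB\\
\dA\ar[r]^{\pi_A}\ar[u]^{i_A}& \dR\ar[u]_{\epsilon_B}
}
\]
where $i_A\colon\dA\to\dC^{\nabla}$ and $p_B\colon\dC^{\nabla}\to\dB$ are the morphisms of DG algebras that exist by the definition of a DG twisted tensor product, $\pi_A$ is the given augmentation, and $\epsilon_B\colon\dR\to\dB$ is the structure morphism. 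Commutativity $p_B\circ i_A=\epsilon_B\circ\pi_A$ should be checked directly on elements: for $a\in\dA$ one has $i_A(a)=a\otimes 1$, and $p_B$ sends $a\otimes 1\mapsto \pi_A(a)\cdot 1$, so both composites send $a$ to $\epsilon_B(\pi_A(a))$. Note all four DG algebras are proper: $\dR$, $\dA$, $\dB$ by hypothesis, and $\dC^{\nabla}$ because as a graded vector space it is $A\otimes_R B$, whose cohomology is a subquotient of the finite-dimensional $H^*(\dA)\otimes_{H^*(\dR)}H^*(\dB)$ — more carefully, since $\dB$ is semi-flat over $\dR$ and everything is proper, $H^*(\dC^{\nabla})$ is finite-dimensional; this bookkeeping I would spell out but it is routine.

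Next I would verify conditions (Res). Condition (Res).1 asks that $\dC^{\nabla}$ be semi-flat as a left DG $\dA$\!--module: as a left $\dA$\!--module $\dC^{\nabla}\cong \dA\otimes_{\dR}\dB$ (the twist only affects the right multiplication and, in the $\nabla$ case, the differential, not the left $\dA$\!--action nor the underlying left module structure), and since $\dB$ is semi-flat as a left DG $\dR$\!--module, the module $\dA\otimes_{\dR}\dB$ is semi-flat as a left DG $\dA$\!--module by base change of semi-flatness along $\epsilon_A\colon\dR\to\dA$. Here I must be slightly careful that the deformed differential $d_{\dC^{\nabla}}$ does not destroy semi-flatness: semi-flatness of a left module is checked against acyclicity of $\dA\otimes$(right acyclic module), and the extra term $a\cdot\nabla(1\otimes b)$ lands in $I_A\otimes_R B$, so $\dC^{\nabla}$ is a (finite, in the proper case) iterated extension of the $\nabla=0$ module $\dA\otimes_{\dR}\dB$ by itself with respect to the $B$\!--filtration; extensions of semi-flat modules are semi-flat, which settles it. Condition (Res).2 requires $\dR\otimes_{\dA}\dC^{\nabla}\xrightarrow{\ \sim\ }\dB$ to be a quasi-isomorphism; in fact it is an isomorphism already on the nose, since $\dR\otimes_{\dA}(\dA\otimes_{\dR}\dB)\cong\dR\otimes_{\dR}\dB\cong\dB$ as $\dR$\!--bimodules, and one checks this identification is compatible with the deformed differential because the $\nabla$\!--term is killed after applying $\dR\otimes_{\dA}(-)$ (it maps into $\dR\otimes_{\dA}(I_A\otimes_R B)$, and $\dR\otimes_{\dA}I_A=0$ as $I_A=\Ker\pi_A$ and $\pi_A\circ\epsilon_A=\id$, so $\dA=\dR\oplus I_A$ as left $\dR$\!--modules via $\epsilon_A,\pi_A$); matching up with $p_B$ shows the map is exactly $p_B$ up to this identification.

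Finally, with the square and (Res) in hand, the hypotheses of Proposition \ref{regprop} are met verbatim: (i) $\pi_A\colon\dA\to\dR$ is a pp-morphism by assumption; (ii) $p_B\colon\dC^{\nabla}\to\dB$ is respectable by assumption; (iii) $\dB$ is regular (or smooth) by assumption. Proposition \ref{regprop} then yields the equivalence $\D_{\fd}(\dC^{\nabla})\cong\prf\dC^{\nabla}$, and, if $\dC^{\nabla}$ has a regular (or smooth) realization, that $\dC^{\nabla}$ is regular (or smooth). I expect the main obstacle to be the careful verification of (Res).1 in the presence of the deformed differential $d_{\dC^{\nabla}}$ — i.e. confirming that passing from the untwisted, undeformed module $\dA\otimes_\dR\dB$ to $\dC^{\nabla}$ preserves semi-flatness over $\dA$, which hinges on the fact that $\nabla(1\otimes b)\in I_A\otimes_R B$ makes $\dC^{\nabla}$ a successive extension of semi-flat $\dA$\!--modules along the filtration by powers of (the ideal generated by) $I_A$, together with the parallel check that the canonical map of (Res).2 really is $p_B$. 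Everything else is formal unwinding of the definitions plus an appeal to Proposition \ref{regprop}.
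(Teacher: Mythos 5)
Your proposal is correct and follows essentially the same route as the paper: the paper's own proof simply observes that $\dC^{\nabla}$ is semi-flat as a left DG $\dA$\!--module and that $\dR\otimes_{\dA}\dC^{\nabla}\cong\dR\otimes_{\dA}\dA\otimes_{\dR}\dB\cong\dB$, so that $\dR,\dA,\dB,\dC^{\nabla}$ form diagram (\ref{fiber}) satisfying (Res), and then invokes Proposition \ref{regprop}. Your additional verifications (commutativity of the square, the effect of the $\nabla$\!--term, identification of the canonical map with $p_B$) are details the paper leaves implicit, and they do not change the argument.
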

\begin{proof} It directly follows from Proposition \ref{regprop}, because $\dC^{\nabla}$ is semi-flat as the left DG $\dA$\!--module and
$\dR\otimes_{\dA}\dC^{\nabla}\cong  \dR\otimes_{\dA}\dA\otimes_{\dR}\dB\cong \dB.$ Thus, the DG algebras $\dR, \dA, \dB, \dC^{\nabla}$ form the commutative diagram (\ref{fiber}) that satisfies properties
(Res).
\end{proof}

\section{Finite-dimensional DG algebras and twisted tensor product}

\subsection{Finite-dimensional DG algebras}
Let $\dR=(\gR, \dr)$ be a finite-dimensional DG algebra over a base field $\kk.$
Denote by $\rd\subset R$ the (Jacobson) radical of the $\kk$\!--algebra $R.$
The ideal $\rd$ is  graded.
On the other hand,  the radical $\rd\subset R$ is not necessary a DG ideal, in general. In other words, $\dr(\rd)$ is not necessary a subspace of $\rd.$
In fact, with any two-sided graded ideal $I\subset\gR$ we can associate two DG ideals $\mI_{-}$ and $\mI_{+}$ (see \cite{Or20}).

\begin{definition}\label{intext}
Let $\dR=(\gR, \dr)$ be a finite-dimensional DG algebra and $I\subset\gR$ be a graded (two-sided) ideal.
The {\sf internal} DG ideal $\mI_{-}=(I_{-}, \dr)$ consists of all $r\in I$ such that $\dr( r)\in I,$ while
the {\sf external} DG ideal  $\mI_{+}=(I_{+}, \dr)$  is the sum $I+\dr (I).$
\end{definition}

It is easy to see that $I_{-}$ and $I_{+}$ are indeed two-sided graded ideals of $\gR.$ It is evident that they are closed under the action of the differential $\dr.$
Thus, for any (two-sided) ideal $I\subset\gR$ we obtain two DG ideals  $\mI_{-}$ and $\mI_{+}$ in the DG algebra $\dR.$
If the ideal $I\subset\gR$ is closed under the action of the differential $\dr,$ then the  DG ideals $\mI_{-}$ and $\mI_{+}$
coincide with $I.$

\begin{lemma}\label{quasi-is}
The natural morphism of the DG ideals $\mI_{-}\to\mI_{+}$ is a quasi-isomorphism, and, hence, the morphism of the DG algebras
$\dR/\mI_{-}\to \dR/\mI_{+}$ is a quasi-isomorphism too.
\end{lemma}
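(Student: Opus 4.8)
The plan is to analyze the short exact sequence of DG modules
\[
0 \to \mI_{-} \to \mI_{+} \to \mI_{+}/\mI_{-} \to 0
\]
and show that the quotient $\mI_{+}/\mI_{-}$ is acyclic; the long exact cohomology sequence then gives that $\mI_{-}\to\mI_{+}$ is a quasi-isomorphism. First I would unwind the definitions: by Definition \ref{intext}, $I_{+}=I+\dr(I)$, so an element of $I_{+}$ has the form $x+\dr(y)$ with $x,y\in I$; and $I_{-}=\{r\in I : \dr(r)\in I\}$. Thus $I_{+}/I_{-}$ is spanned by classes of elements $\dr(y)$ for $y\in I$, modulo those that already lie in $I_{-}$, i.e. modulo $\dr(y)$ with $\dr(y)\in I$ — but every $\dr(y)$ satisfies $\dr(\dr(y))=0\in I$, so actually $\dr(y)\in I_{-}$ automatically. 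This shows $I_{+}/I_{-}$ can only be non-trivial through the part of $I_{+}$ lying \emph{outside} $I$; more precisely $I_{+}=I_{-}+\dr(I)$ and $\dr(I)\subseteq I_{-}$ is false in general (an element $\dr(y)$ need not lie in $I$), so the correct statement is $I_{+}/I_{-}\cong \dr(I)/(\dr(I)\cap I_{-})$ via the natural map, and one checks $\dr(I)\cap I_{-}=\dr(I_{-})$ together with the observation that $\dr(I)\subseteq \ker\dr$.

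The key point is then that the differential on the quotient complex $\mI_{+}/\mI_{-}$ is identically zero (since $\dr(I_{+})\subseteq \dr(I)\subseteq \ker\dr \subseteq I_{-}$, using $\dr^2=0$), so its cohomology equals the complex itself, namely $\dr(I)/\dr(I_{-})$. To finish I must show this vanishes, i.e. $\dr(I)\subseteq\dr(I_{-})$: given $y\in I$, I want $z\in I_{-}$ with $\dr(y)=\dr(z)$. Take $z=y$ if $\dr(y)\in I$; otherwise... this is exactly the subtle point, and the resolution is that $\dr(y)\in \ker\dr$, and $\ker\dr\cap(I+\dr(I))$ — better: note $\dr(y)+(-\dr(y)) $ trivially, so instead one should argue at the level of cohomology directly. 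The clean argument: $H^\bullet(\mI_{+}/\mI_{-})$ is computed by the two-term consideration that any cocycle in $I_{+}$ modulo $I_{-}$ is represented by some $w\in I_{+}$ with $\dr(w)\in I_{-}$; write $w=x+\dr(y)$, $x,y\in I$; then $\dr(w)=\dr(x)\in I_{-}\subseteq I$, so $x\in I_{-}$; hence $w\equiv \dr(y)\pmod{\mI_{-}}$, and $\dr(y)=\dr(y)$ with $y\in I_{-}$ iff $\dr(y)\in I$ — if not, we use that $w$ being a coboundary requires an element $u\in \mI_{+}$ with $\dr(u)=w$, and here $\dr(\dr(y))=0\ne \dr(y)$ in general, so one instead shows every element of $I_{+}/I_{-}$ is \emph{already} a coboundary: indeed $\dr(y)=\dr(y)$ and $y\in I\subseteq I_{+}$, so $[\dr(y)]=d_{\mI_{+}/\mI_{-}}[y]=0$ in cohomology since $[y]$ makes sense as an element of the quotient complex in the appropriate degree. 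Thus every cohomology class vanishes.

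So the main obstacle, and the step deserving the most care, is the bookkeeping showing simultaneously that (a) the induced differential on $\mI_{+}/\mI_{-}$ vanishes and (b) every element of $\mI_{+}/\mI_{-}$ is a boundary — these look contradictory but are reconciled by noting that $\mI_{+}/\mI_{-}$ sits in a range of degrees where $I/I_{-}$ maps onto it via $\dr$: concretely the composite $I \xrightarrow{\dr} I_{+} \twoheadrightarrow I_{+}/I_{-}$ is surjective with the property that it factors as the differential of the quotient complex applied to the image of $I$ in $(I_{+}+\cdots)/I_{-}$. Once acyclicity of $\mI_{+}/\mI_{-}$ is established, the first assertion of the lemma follows from the long exact sequence. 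For the second assertion, apply the same reasoning to the short exact sequence of DG algebras $0\to \mI_{+}/\mI_{-}\to \dR/\mI_{-}\to \dR/\mI_{+}\to 0$ (a sequence of DG $\dR/\mI_{-}$-modules), whose connecting maps are controlled by the just-proved acyclicity of $\mI_{+}/\mI_{-}$; the long exact cohomology sequence immediately yields that $\dR/\mI_{-}\to\dR/\mI_{+}$ is a quasi-isomorphism.
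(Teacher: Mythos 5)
Your ``clean argument'' is precisely the paper's proof, and it is complete as it stands: a cocycle of $\mI_{+}/\mI_{-}$ is represented by $w\in I_{+}$ with $\dr(w)\in I_{-}$; writing $w=x+\dr(y)$ with $x,y\in I$ and using $\dr(w)=\dr(x)\in I$ gives $x\in I_{-}$, hence $[w]=[\dr(y)]$ is the coboundary of the class of $y\in I\subseteq I_{+}$, so $\mI_{+}/\mI_{-}$ is acyclic; the long exact sequences for $0\to\mI_{-}\to\mI_{+}\to\mI_{+}/\mI_{-}\to 0$ and $0\to\mI_{+}/\mI_{-}\to\dR/\mI_{-}\to\dR/\mI_{+}\to 0$ then give both assertions, exactly as in the paper.

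However, the claims you wrap around this argument are false and should be deleted, and your closing paragraph rests on them. The induced differential on $\mI_{+}/\mI_{-}$ is \emph{not} identically zero: your chain $\dr(I_{+})\subseteq\dr(I)\subseteq\ker\dr\subseteq I_{-}$ fails at the last inclusion, because membership in $I_{-}$ requires membership in $I$, and $\dr(y)$ for $y\in I$ need not lie in $I$ (if it always did, the lemma would be trivial, as then $\mI_{-}=\mI_{+}=I$). For the same reason the decomposition $I_{+}=I_{-}+\dr(I)$, the asserted surjectivity of the map $I\to I_{+}/I_{-}$ induced by $\dr$, and your claim (b) that every element of $I_{+}/I_{-}$ is a boundary all fail in general. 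A concrete example: $\dR=\kk[\varepsilon]/(\varepsilon^{2})$ with $\deg\varepsilon=-1$ and $\dr(\varepsilon)=1$, and $I=(\varepsilon)$ the radical; then $I_{-}=0$ and $I_{+}=\dR$, so the quotient complex is $\dR$ with its nonzero differential, in which $\bar\varepsilon$ is neither a cycle nor a boundary, and the image of $I$ under $\dr$ is only $\kk\cdot 1$. There is no tension between your (a) and (b) to ``reconcile'': acyclicity needs only that every cocycle be a coboundary, which is exactly what the clean argument establishes; (a) and (b) are neither true nor needed.
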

\begin{proof}
We have to check that the complex $\mI_{+}/\mI_{-}$ is acyclic.
Let $x\in\mI_{+}$ be an element such that  $\dr (x)=w$ with $w\in\mI_{-}.$ We know that $x=y+\dr (z),$ where $y, z\in I.$
Since $\dr (y)=w,$ then, by the definition of $\mI_{-},$ we have $y\in\mI_{-}.$ Therefore, we obtain an equality $\bar{x}=\dr (\bar{z})$ in $\mI_{+}/\mI_{-},$
where $\bar{x}, \bar{z}$ are the images of the elements $x, z$ in the quotient $\mI_{+}/\mI_{-}.$
\end{proof}

\begin{definition}
Let $\dR=(\gR, \dr)$ be a finite-dimensional DG algebra and
$\rd\subset\gR$ be the radical. The DG ideals $\rdi, \rde$ will be called the {\sf internal} and {\sf external} DG radicals of the DG algebra $\dR.$
\end{definition}

Lemma \ref{quasi-is} implies that a priori different  DG ideals $\rdi$ and $\rde$ give us quasi-isomorphic DG algebras $\dR/\mJ_{-}$ and $\dR/\mJ_{+}.$
A useful property of the DG ideal $\rdi$ is its nilpotency ( because it is  a subideal of $\rd$), while a useful property of the DG ideal $\rde$
is semisimplicity of the underlying algebra ( because it is  a quotient of the semisimple algebra $S=R/\rd$).
It is proved in \cite{Or20} that semisimplicity of the underlying algebra implies semisimplicity of a DG algebra.

Recall that a finite-dimensional DG algebra $\dS$ is called {\sf simple}
if the DG category $\prfdg\dS$ of perfect DG modules is quasi-equivalent to $\prfdg D,$ where $D$ is a finite-dimensional division
$\kk$\!--algebra, and it is called {\sf semisimple} if  the DG category $\prfdg\dS$ is quasi-equivalent to a sum $\prfdg D_1\oplus\cdots\oplus \prfdg D_m,$ where all $D_i$ are finite-dimensional division algebras over $\kk.$
In addition, $\prfdg\dS$ is called {\sf separable}, if all division algebras $D_i$ are separable over $\kk$ (see Definition 2.11 of \cite{Or20}).

\begin{proposition}\cite[Prop. 2.16]{Or20}\label{eqsemisimple}
Let $\dS=(\gS, \ds)$ be a DG algebra over a field $\kk$ such that $S$ is a semisimple algebra. Then $\dS$ is a semisimple DG algebra.
\end{proposition}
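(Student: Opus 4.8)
Here is how I would approach Proposition~\ref{eqsemisimple}.

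The plan is to reduce to the case where the underlying algebra is simple, observe that the differential is then inner, and finally recognize the resulting DG algebra as the endomorphism DG algebra of a bounded complex of projectives over a semisimple ring — which is formal, with semisimple cohomology. First I would split off the Wedderburn blocks. Write $S=\prod_i A_i$ with $A_i\cong M_{n_i}(D_i)$ simple. The centre $Z(S)=\prod_i Z(D_i)$ is a finite product of fields, hence a finite-dimensional reduced $\ZZ$\!--graded $\kk$\!--algebra, hence concentrated in degree $0$ (a nonzero homogeneous element of nonzero degree would be a unit with nonzero powers in infinitely many degrees). So the primitive central idempotents $\epsilon_i$ lie in $S^0$, and differentiating $\epsilon_i=\epsilon_i^2$ gives $\ds(\epsilon_i)=2\epsilon_i\ds(\epsilon_i)$; multiplying by $\epsilon_i$ forces $\epsilon_i\ds(\epsilon_i)=0$ and hence $\ds(\epsilon_i)=0$ in every characteristic. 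Thus $\ds$ preserves each $A_i$, so $\dS\cong\prod_i(A_i,\ds|_{A_i})$ and $\prfdg\dS\cong\bigoplus_i\prfdg(A_i,\ds|_{A_i})$; we may therefore assume $S\cong M_n(D)$ is simple, with $D$ a finite-dimensional division $\kk$\!--algebra and $L:=Z(D)=Z(S)\subset S^0$.

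Next I would show that $\ds$ is inner. Differentiating $zs=sz$ for $z\in Z(S)$ and homogeneous $s$, and using that $z$ is central of degree $0$, gives $\ds(z)\,s=(-1)^{|s|}s\,\ds(z)$; since $\ds(z)\in S^1$, this says $\ds(z)$ lies in the odd part of the graded centre of $S$. But a graded-central element commutes with $S^0\cong\prod_j M_{m_j}(D)$, which contains a full set of matrix idempotents of $M_n(D)$, hence is a diagonal matrix — hence of degree $0$; so $\ds(z)=0$, i.e.\ $\ds$ is $L$\!--linear. Since $M_n(D)$ is a central simple, hence separable, $L$\!--algebra, the standard computation with a degree-$0$ separability idempotent shows that every $L$\!--linear odd derivation is inner: $\ds=[\xi,-]$ (graded commutator) for some $\xi\in S^1$, and $\ds^2=[\xi^2,-]=0$ then forces $\xi^2\in Z(S)\cap S^2=0$.

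Finally I would exhibit $\dS$ as an endomorphism DG algebra and invoke formality over a semisimple ring. Let $M$ be the right DG module over the (honestly) semisimple DG algebra $(S,0)$ whose underlying graded module is $S$ and whose differential is $m\mapsto\xi m$ — right $S$\!--linear, of degree $1$, with square $\xi^2\cdot=0$; it is a bounded complex of free graded $S$\!--modules, hence perfect over $(S,0)$. A direct check identifies $\dHom_{(S,0)}(M,M)$ with $(S,[\xi,-])=\dS$. Because graded $S$\!--modules form a semisimple abelian category, $M$ splits as $(H',0)\oplus A$ with $H':=\bigoplus_p H^p(M)$ and $A$ contractible, so $M\cong H'$ in $\D(S,0)$. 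By Keller's theorem $\prfdg\dS\simeq\prfdg\dHom_{(S,0)}(M,M)$ is quasi-equivalent to the thick DG subcategory of $\prfdg(S,0)$ generated by $M$, which is the one generated by $H'$, namely $\prfdg\dHom_{(S,0)}(H',H')=\prfdg(\End_S(H'),0)$. Here $\End_S(H')$ is a semisimple $\kk$\!--algebra (the endomorphism algebra of a semisimple module) carrying the zero differential; if $H'=0$ then $\dS$ is acyclic and contributes nothing, and otherwise ordinary Morita equivalence along degree-$0$ primitive idempotents identifies $\prfdg(\End_S(H'),0)$ with a finite direct sum of categories $\prfdg D'$ for finite-dimensional division $\kk$\!--algebras $D'$. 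Reassembling the blocks, $\prfdg\dS$ is quasi-equivalent to a finite direct sum $\bigoplus_i\prfdg D_i$, which is precisely the statement that $\dS$ is semisimple.

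The two substantive points are in the second and third steps. The main obstacle is proving that $\ds$ is inner: that the graded centre of $M_n(D)$ is concentrated in degree $0$, and that $L$\!--linear (odd) derivations of a central simple $L$\!--algebra are inner — the graded/super analogue of the vanishing of the first Hochschild cohomology of a separable algebra. The other nontrivial input is the ``formality over a semisimple ring'' step: making precise, via the DG Morita theory already used in the paper and the splitting $M\cong H'\oplus(\text{contractible})$, that the endomorphism DG algebras of $M$ and of its cohomology have quasi-equivalent categories of perfect modules. The remaining verifications — $\ds(\epsilon_i)=0$, the identification $\dHom_{(S,0)}(M,M)\cong\dS$, and the final Morita reductions — are routine.
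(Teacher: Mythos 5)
This proposition is not proved in the paper at all: it is imported verbatim from \cite[Prop.~2.16]{Or20}, so there is no internal argument to measure your proof against. Judged on its own, your argument is essentially correct and is a legitimate self-contained route to the statement: reduce to a single Wedderburn block using that the centre of a finite-dimensional reduced $\ZZ$\!--graded algebra sits in degree $0$ (so the primitive central idempotents are degree-$0$ cycles killed by $\ds$), show that on a block $M_n(D)$ the differential is an inner odd derivation $[\xi,-]$ with $\xi\in S^1$, $\xi^2=0$, by separability of $M_n(D)$ over its centre, identify $\dS$ with $\dHom_{(S,0)}(M,M)$ for $M=(S,\xi\cdot)$, and then use semisimplicity of the category of graded $S$\!--modules together with DG Morita theory to replace $M$ by its cohomology, landing in $\prfdg$ of a graded semisimple algebra with zero differential (in fact this shows the stronger statement that $\dS$ is formal). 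The degenerate case $H'=0$, where $\dS$ is acyclic, is correctly set aside.

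Two steps deserve to be written out rather than asserted, and you partly flag them yourself. First, the claim that $S^0$ contains a full set of matrix idempotents of $M_n(D)$ is exactly the graded Wedderburn fact that a finite-dimensional $\ZZ$\!--graded simple algebra carries an elementary grading with the graded division algebra concentrated in degree $0$; it is true, but at the point where you use it (to kill $\ds(z)$ for central $z$) there is a cheaper argument: $\ds(z)$ is a homogeneous graded-central element of degree $1$, hence nilpotent (its powers lie in distinct degrees), hence $S\,\ds(z)\,S$ is a nilpotent two-sided ideal of the semisimple algebra $S$, so $\ds(z)=0$. Second, the ``degree-$0$ separability idempotent'' step, i.e.\ the super analogue of $HH^1=0$ for a separable $L$\!--algebra, does work (regard a degree-$1$ odd derivation as an ordinary derivation with values in the sign-twisted bimodule, use $\Ext^1_{S\otimes_L S^{\op}}(S,-)=0$, and take the degree-$1$ homogeneous component of the implementing element), but it is the technical heart and should be spelled out. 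Minor slips: in your degree-$0$-centre argument a nonzero homogeneous central element of nonzero degree need not be a unit in a product of fields — the correct point, which you also state, is that it is nilpotent and the centre is reduced; and ``bounded complex of free graded $S$\!--modules, hence perfect'' is loose — perfectness of $M$ is cleanest via the splitting $M\cong H'\oplus(\text{contractible})$ that you establish immediately afterwards. None of this affects the validity of the overall proof.
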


This proposition implies that both DG algebras $\dR/\mJ_{-}$ and $\dR/\mJ_{+}$ are semisimple, since they are quasi-isomorphic and
the underlying algebra for $\dR/\mJ_{+}$ is semisimple.

\subsection{Finite-dimensional DG modules}
Now let us consider the internal DG radical $\mJ_{-}.$ It is nilpotent as a subideal of the radical $J.$
Let $J\supset J^2\supset\cdots\supset J^n=0$ be the powers of the radical $J,$ where $n$ is the index of nilpotency of $J.$
Denote by $\mJ_p$ the internal DG ideals $(\rd^p)_{-}$ for $p=1,\ldots, n.$
This gives us  a chain of DG ideals  $\rdi=\mJ_{1}\supseteq\cdots\supseteq\mJ_n=0,$ and it is easy to check that
$\mJ_{p}\mJ_{q}\subseteq \mJ_{p+q}.$

\begin{lemma}\label{fdmod} Let $\dR$ be a finite-dimensional DG algebra. Then the triangulated subcategory $\D\subseteq\D_{\fd}(\dR),$  generated by the DG $\dR$\!--module $\dR/\rde,$ contains
all finite-dimensional DG modules.
\end{lemma}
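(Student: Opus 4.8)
The plan is to filter the trivial DG module $\dS := \dR/\rde$ (which is semisimple by Proposition \ref{eqsemisimple}) by the internal DG ideals $\mJ_p = (\rd^p)_-$ and show by descending induction on $p$ that every finite-dimensional DG module lies in the triangulated subcategory $\D$ generated by $\dS$. First I would record the basic structural facts: $\mJ_{-} = \mJ_1 \supseteq \mJ_2 \supseteq \cdots \supseteq \mJ_n = 0$ is a chain of DG ideals with $\mJ_p \mJ_q \subseteq \mJ_{p+q}$, so each successive quotient $\mJ_p/\mJ_{p+1}$ is a DG bimodule on which $\rdi$ acts trivially on both sides; hence it is naturally a DG module over $\dS = \dR/\rde$ (using that $\rdi \to \rde$ is a quasi-isomorphism, Lemma \ref{quasi-is}, so $\dR/\rdi \to \dR/\rde$ is a quasi-isomorphism and a DG module over $\dR/\rdi$ with trivial $\rdi$-action is, up to quasi-isomorphism, a DG module over $\dS$). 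Since $\dS$ is semisimple, any finite-dimensional DG $\dS$-module is a finite direct sum of shifts of the simple summands, each of which lies in $\langle \dS \rangle$; therefore every finite-dimensional DG module that is annihilated by $\rdi$ belongs to $\D$.

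Next I would run the induction. Let $\mM$ be an arbitrary finite-dimensional DG $\dR$-module. Consider the chain of DG submodules $\mM \supseteq \mM\rdi \supseteq \mM\rdi^2 \supseteq \cdots$; because $\rdi$ is nilpotent (being a subideal of the radical $\rd$) this chain terminates at $0$ after finitely many steps, say $\mM\rdi^k = 0$. Each successive subquotient $\mM\rdi^{j}/\mM\rdi^{j+1}$ is a finite-dimensional DG module annihilated by $\rdi$, hence lies in $\D$ by the previous paragraph. Since $\D$ is a triangulated subcategory and $\mM$ is built from these subquotients by finitely many extensions (exact triangles), it follows that $\mM \in \D$. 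This gives $\D_{\fd}(\dR) \subseteq \D$, and the reverse inclusion is trivial since $\dS = \dR/\rde \in \D_{\fd}(\dR)$.

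One point that needs care, and which I expect to be the main technical obstacle, is the passage "DG module annihilated by $\rdi$ $\Rightarrow$ object of $\langle \dS \rangle$." A priori such a module $\mN$ is only a DG module over $\dR/\rdi$, not over $\dR/\rde$, and $\dR/\rdi \neq \dR/\rde$ in general — they are merely quasi-isomorphic. The resolution is that the quasi-isomorphism $\dR/\rdi \to \dR/\rde$ induces an equivalence on derived categories (hence on the categories of perfect and of cohomologically finite-dimensional modules) that is compatible with restriction along $\dR \to \dR/\rdi$ and $\dR \to \dR/\rde$; so the class of $\mN$ in $\D_{\fd}(\dR)$ agrees with the class of the corresponding finite-dimensional $\dR/\rde$-module. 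Then one uses that $\dR/\rde$ is semisimple — so its perfect derived category is a finite sum $\prf D_1 \oplus \cdots \oplus \prf D_m$ — to conclude that every finite-dimensional module over it is a finite sum of shifts of the $D_i$, each of which is a direct summand of $(\dR/\rde)^{\oplus r}$ in $\prf(\dR/\rde)$ and hence, after restriction to $\dR$, lies in $\langle \dR/\rde \rangle_1 \subseteq \D$. The remaining steps — terminating filtrations, stability of $\D$ under the extensions appearing — are routine once this identification is in place.
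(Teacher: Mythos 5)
Your proof is correct and follows essentially the same route as the paper: filter a finite-dimensional DG module by (powers of) the internal DG radical, observe that the subquotients are DG modules over the semisimple DG algebra $\dR/\rdi\cong\dR/\rde,$ and build the module from these subquotients by finitely many exact triangles. The paper filters by $\mM\mJ_{p}$ with $\mJ_p=(\rd^p)_{-}$ rather than by $\mM\rdi^{\,p},$ which is an immaterial variation, and the care you take in identifying DG modules over $\dR/\rdi$ with DG modules over $\dR/\rde$ via the quasi-isomorphism of Lemma \ref{quasi-is} is precisely the point the paper leaves implicit.
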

\begin{proof} For any
finite-dimensional DG $\dR$\!--module $\mM$ there is  a filtration
\[
\mM=\mM\supseteq\mM\mJ_{1}\supseteq\cdots\supseteq\mM\mJ_n=0,
\]
where each quotient $\mM\mJ_{p}/\mM\mJ_{p+1}$ is a DG module over the semisimple DG algebra $\dR/\rdi.$
Thus, any quotient $\mM\mJ_{p}/\mM\mJ_{p+1}$ and, hence, any finite-dimensional DG module $\mM$ belongs to the triangulated subcategory
$\D$ that is generated by the DG $\dR$\!--module $\dR/\rdi\cong \dR/\rde.$
\end{proof}

The following Proposition is proved in \cite{Or20} (see Proposition 2.5).

\begin{proposition}\cite[Prop. 2.5]{Or20}\label{obj}
Let $\dR$ be a finite-dimensional DG algebra. Let $\mM$ be a perfect DG $\dR$\!--module.
Then $\mM$ is homotopy equivalent to a finite-dimensional semi-projective DG $\dR$\!--module and  the DG endomorphism algebra $\dEnd_{\dR}(\mM)$ is quasi-isomorphic to a finite-dimensional DG algebra.
\end{proposition}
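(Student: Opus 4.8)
The plan is to replace $\mM$ by an honest direct summand of a finite-dimensional finitely generated semi-free DG module, obtained by lifting a homotopy idempotent to a strict one; finite-dimensionality of the ambient endomorphism DG algebra is precisely what makes the lifting possible.

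\emph{Step 1 (finiteness).} By definition of $\prfdg\dR$, the module $\mM$ is a direct summand of some $\mP\in\SFf\dR$ in $\Ho(\SF\dR)$; I would fix closed degree-zero morphisms $\iota\colon\mM\to\mP$ and $\rho\colon\mP\to\mM$ with $\rho\iota$ homotopic to $\id_\mM$. Since $\dR$ is finite-dimensional and $\mP$ carries a finite filtration with subquotients finite sums of shifts $\dR[m]$, the space $\mP$ is finite-dimensional, so $\dE:=\dEnd_\dR(\mP)$, a graded subspace of $\Hom_\kk(\mP,\mP)$, is a finite-dimensional DG algebra. Then $e:=\iota\rho\in Z^0(\dE)$ satisfies $e^2=\iota(\rho\iota)\rho\sim e$, hence $\bar e$ is an idempotent of the finite-dimensional algebra $\H^0(\dE)=Z^0(\dE)/B^0(\dE)$, where $B^0(\dE)=D(\dE^{-1})$ is a two-sided ideal of $Z^0(\dE)$.

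\emph{Step 2 (straightening the idempotent).} I would lift $\bar e$ to a strict idempotent $e'\in Z^0(\dE)$. This is possible modulo any two-sided ideal of a finite-dimensional algebra: picking a preimage $a$ of $\bar e$, the commutative subalgebra $\kk[a]\subseteq Z^0(\dE)$ is a finite product of local rings, so the image of $a$ in $\kk[a]/(\kk[a]\cap B^0(\dE))$ is a tuple of $0$'s and $1$'s, which plainly lifts to an idempotent of $\kk[a]$, hence of $Z^0(\dE)$. Now $e'$ is a closed degree-zero idempotent endomorphism, so $\mP=e'\mP\oplus(1-e')\mP$ is a direct sum decomposition of DG $\dR$\!--modules, and $\mM':=e'\mP$ is a finite-dimensional DG module that is semi-projective as a direct summand of the semi-free $\mP$ (Definition \ref{spr}(3)).

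\emph{Step 3 (identification and conclusion).} Since $\bar{e'}=\bar e$ in $\H^0(\dE)=\Hom_{\Ho(\SF\dR)}(\mP,\mP)$, the maps $\rho'\circ\iota\colon\mM\to\mM'$ and $\rho\circ j\colon\mM'\to\mM$, where $j\colon\mM'\hookrightarrow\mP$ and $\rho'\colon\mP\twoheadrightarrow\mM'$ are the genuine inclusion and projection, are mutually inverse in $\Ho(\SF\dR)$: as $j\rho'=e'$ one gets $(\rho j)(\rho'\iota)=\rho e'\iota\sim\rho\iota\rho\iota\sim\id_\mM$ and $(\rho'\iota)(\rho j)=\rho'(\iota\rho)j\sim\rho'e'j=\rho'j=\id_{\mM'}$. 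Thus $\mM$ is homotopy equivalent to the finite-dimensional semi-projective module $\mM'$. Finally $\dEnd_\dR(\mM')=e'\dE e'$ is $D$\!--stable (as $D(e')=0$) and hence a finite-dimensional DG algebra, and $\dEnd_\dR(\mM)$, being quasi-isomorphic to $\dEnd_\dR(\mM')$ because $\mM\cong\mM'$ in $\Ho(\SF\dR)$, is as required. The only non-formal ingredient is the idempotent lifting of Step 2; Steps 1 and 3 are routine bookkeeping with splittings of idempotents, which become automatic once the idempotent is realized strictly.
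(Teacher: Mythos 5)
Your argument is correct, and it is essentially the argument behind the paper's citation of \cite[Prop. 2.5]{Or20} (the paper itself reproduces no proof, only the statement): exhibit $\mM$ as a homotopy retract of a finite-dimensional $\mP\in\SFf\dR$, lift the resulting homotopy idempotent in the finite-dimensional DG algebra $\dEnd_{\dR}(\mP)$ to a strict closed idempotent $e'$ --- your reduction to the commutative Artinian subalgebra $\kk[a]$, a finite product of local rings, is a valid way to do this because $B^0(\dE)$ is a two-sided ideal of $Z^0(\dE)$ --- and split $e'$ inside $\mP$ to obtain the finite-dimensional semi-projective model $e'\mP$, homotopy equivalent to $\mM$. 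The only step you assert without justification is that homotopy equivalent semi-projective DG modules have endomorphism DG algebras connected by a zig-zag of quasi-isomorphisms; this invariance is a standard fact in the DG literature and is a reasonable citation rather than a genuine gap, though a fully self-contained write-up would include a word on it.
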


In \cite{Or20} we claim  that any perfect DG module is homotopy equivalent to a finite-dimensional homotopically projective DG module.
But in fact we showed that it is a direct summand of a semi-free DG module, and, hence, it is semi-projective by 3) of Definition \ref{spr}.

Proposition \ref{obj} and Lemma \ref{fdmod} directly imply the following corollary.

\begin{corollary}\label{subfd}
 Let $\dR$ be a finite-dimensional DG algebra. Then the subcategory $\D\subseteq\D_{\fd}(\dR),$  generated by the DG $\dR$\!--module $\dR/\rde,$ contains the category
$\prf\dR,$ and, hence, for any DG ideal $\mI\subseteq \mJ_{+}$ the morphism
$\dR\to \dR/\mI$ is acceptable.
\end{corollary}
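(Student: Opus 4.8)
The plan is to obtain both assertions as formal consequences of Proposition \ref{obj} and Lemma \ref{fdmod}, deducing the statement about acceptability from the inclusion $\prf\dR\subseteq\D.$

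First I would check that the triangulated subcategory $\D\subseteq\D_{\fd}(\dR)$ generated by $\dR/\rde$ contains $\prf\dR.$ Let $\mM\in\prf\dR$ be an arbitrary object. By Proposition \ref{obj} the DG module $\mM$ is homotopy equivalent to a finite-dimensional semi-projective DG $\dR$\!--module; in particular, in the derived category $\D(\dR)$ the object $\mM$ is isomorphic to a finite-dimensional DG module. By Lemma \ref{fdmod} every finite-dimensional DG module lies in $\D,$ and hence $\mM\in\D.$ Since $\mM$ was arbitrary, $\prf\dR\subseteq\D$; the inclusion $\prf\dR\subseteq\D_{\fd}(\dR)$ itself is automatic, as $\dR$ is finite-dimensional and therefore proper.

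Next I would deduce that the morphism $\mf\colon\dR\to\dR/\mI$ is acceptable for any DG ideal $\mI\subseteq\rde.$ Since $\mI\subseteq\rde,$ the canonical surjection of DG algebras $\dR/\mI\to\dR/\rde$ exhibits $\dR/\rde$ as a DG $\dR/\mI$\!--module, which is finite-dimensional and hence lies in $\D_{\fd}(\dR/\mI).$ The direct image functor $\bR\mf_*$ along an algebra morphism is simply restriction of scalars; it preserves acyclic DG modules, so no resolution is needed and $\bR\mf_*(\dR/\rde)\cong\dR/\rde$ as a DG $\dR$\!--module. Thus $\dR/\rde$ lies in the triangulated subcategory of $\D_{\fd}(\dR)$ generated by the objects $\bR\mf_*\mT$ for $\mT\in\D_{\fd}(\dR/\mI);$ since this subcategory is triangulated, closed under direct summands, and contains $\dR/\rde,$ it contains $\D,$ and hence, by the first part, the whole of $\prf\dR.$ By the definition of an acceptable morphism this proves that $\mf$ is acceptable; the properness of $\dR$ required there holds because $\dR$ is finite-dimensional.

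I do not expect a genuine obstacle, since this is essentially a repackaging of the two cited results. The one point deserving a moment of care is the identification $\bR\mf_*(\dR/\rde)\cong\dR/\rde$: one must observe that the direct image along an algebra morphism is restriction of scalars, which already preserves acyclicity, so the derived functor agrees with the underived one, and that restricting $\dR/\rde$ from $\dR/\mI$ back along $\mf$ returns the same DG $\dR$\!--module. Everything else is just transitivity of classical generation for triangulated subcategories.
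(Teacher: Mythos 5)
Your proposal is correct and follows exactly the route the paper intends: the paper simply notes that Proposition \ref{obj} and Lemma \ref{fdmod} directly imply the corollary, and your argument spells this out, including the standard observation that $\bR\mf_*$ is restriction of scalars so that $\dR/\rde$, viewed as a finite-dimensional DG $\dR/\mI$\!--module, pushes forward to itself and therefore generates a subcategory containing $\D\supseteq\prf\dR$. No gaps; this matches the paper's proof in substance.
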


For a usual finite-dimensional algebra $R$ we can show more. The argument of Lemma \ref{fdmod} proves that the semisimple module $S=R/J,$ where $J$ is the radical, generates the whole category
$\D_{\fd}(R).$  Furthermore, the finite-dimensional modules generate the whole
category $\D_{\fd}(R),$ because any object of $\D_{\fd}(R)$ can be obtained from its cohomology modules by consequent application  of mapping cones.
Thus, we obtain the following proposition.

\begin{proposition} Let $R$ be a finite-dimensional algebra and let $J\subset R$ be the radical. For any  ideal $I\subseteq J$ the morphism
$R\to R/I$ is respectable.
\end{proposition}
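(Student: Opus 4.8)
The plan is to reduce the statement to the corresponding fact about finite-dimensional DG algebras already established in the excerpt, namely Corollary \ref{subfd} together with the observation that for an ordinary (ungraded) finite-dimensional algebra $R$, viewed as a DG algebra concentrated in degree zero, the radical $J$ is automatically a DG ideal, so that $\rdi=\rde=J$. Thus Corollary \ref{subfd} already tells us that the triangulated subcategory $\D\subseteq\D_{\fd}(R)$ generated by $R/J$ contains $\prf R$. The remaining work is to upgrade "contains $\prf R$" to "equals $\D_{\fd}(R)$"; once that is done, the morphism $R\to R/I$ for any ideal $I\subseteq J$ is respectable essentially by definition, because $\bR(\pi)_*$ of the semisimple $R/I$-modules already captures $R/J$ (every simple $R/I$-module is a simple $R$-module killed by $I\supseteq$ nothing forces $I$, but $R/I$ has $R/J$ as a quotient, so $R/J\in\langle\bR\pi_*\mT\rangle$), and $R/J$ generates all of $\D_{\fd}(R)$.

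First I would prove the key claim: the finite-dimensional $R$-modules generate $\D_{\fd}(R)$, and in fact $R/J$ alone does. For the first part, take any $\mM\in\D_{\fd}(R)$; its total cohomology $\bigoplus_p H^p(\mM)$ is finite-dimensional, and a standard truncation argument (the "Postnikov tower" / iterated-cone argument referenced in the paragraph just before the statement) expresses $\mM$ as a finite iterated extension of its shifted cohomology modules $H^p(\mM)[-p]$, each of which is a finite-dimensional $R$-module. Hence $\D_{\fd}(R)$ is generated by finite-dimensional $R$-modules. For the second part I would run exactly the argument of Lemma \ref{fdmod}: any finite-dimensional module $M$ carries the radical filtration $M\supseteq MJ\supseteq\cdots\supseteq MJ^n=0$ with semisimple subquotients, and every semisimple $R$-module is a finite direct sum of simple summands of $S=R/J$, hence lies in $\langle R/J\rangle$. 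So $M\in\langle R/J\rangle$ and therefore $\D_{\fd}(R)=\langle R/J\rangle$.

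Next I would assemble the conclusion. Given an ideal $I\subseteq J$, consider $\pi:R\to R/I$. Since $I\subseteq J$, the algebra $R/I$ is still finite-dimensional with radical $J/I$, and $R/J\cong (R/I)/(J/I)$ is a quotient $R/I$-module, hence a finite-dimensional $R/I$-module, hence an object of $\D_{\fd}(R/I)$; under restriction $\bR\pi_*$ it goes to the $R$-module $R/J$ (restriction along a surjection is just "regard as an $R$-module", and it is exact, so no derived correction is needed). Therefore the subcategory of $\D_{\fd}(R)$ generated by $\{\bR\pi_*\mT : \mT\in\D_{\fd}(R/I)\}$ contains $R/J$, and by the key claim it is all of $\D_{\fd}(R)$. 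That is precisely the definition of $\pi$ being respectable (note $R$ is proper, being finite-dimensional, so the notion applies).

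The only genuinely nontrivial ingredient is the truncation/iterated-cone argument showing $\D_{\fd}(R)$ is generated by the abelian-category objects $\mod\text{-}R_{\mathrm{fd}}$; this is where one must be a little careful, since $R$ is a DG algebra only trivially but $\mM$ is an honest complex of $R$-modules with finite-dimensional total cohomology, and one needs that the canonical truncations $\tau_{\le p}\mM$ are again cohomologically finite-dimensional and fit into triangles $\tau_{\le p-1}\mM\to\tau_{\le p}\mM\to H^p(\mM)[-p]$. Since $R$ sits in degree zero this is the familiar statement in $\D(\text{$R$-Mod})$, and the finiteness is automatic because only finitely many $H^p(\mM)$ are nonzero and each is finite-dimensional; I would simply cite this as the standard fact invoked in the sentence preceding the proposition. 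Everything else is a direct repetition of the radical-filtration argument of Lemma \ref{fdmod} specialized to the degree-zero case, so I expect no further obstacle.
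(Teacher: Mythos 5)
Your proposal is correct and follows essentially the same route as the paper: the paper's justification (in the paragraph preceding the proposition) is exactly your two-step reduction --- objects of $\D_{\fd}(R)$ are built from their cohomology modules by iterated cones, and each finite-dimensional module is generated by $R/J$ via the radical filtration of Lemma \ref{fdmod} --- after which respectability of $R\to R/I$ follows because $R/J\cong (R/I)/(J/I)$ lies in $\D_{\fd}(R/I)$ and restricts (exactly, no derived correction) to $R/J$. The only blemish is the garbled parenthetical in your first paragraph about simple $R/I$-modules, which is harmlessly superseded by the correct argument you give in the final assembly.
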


This proposition can be generalized to the case of  non-positive finite-dimensional DG algebras.
Recall  that a DG algebra $\dR$ is called non-positive if $\dR^i=0$ for all $i>0.$ In this case, the triangulated category $\D_{\fd}(\dR)$
has a t-structure such that the forgetful functor to the derived category of $\kk$\!--vector spaces $\D(\kk)$ is exact.
Therefore, by the same argument as above for usual algebras, the DG module $\dR/\mJ_{+}$ generates the whole triangulated category $\D_{\fd}(\dR),$ and we obtain the following proposition.

\begin{proposition} Let $\dR$ be a finite-dimensional nonpositive DG algebra. For any DG ideal $\mI\subseteq \mJ_{+}$ the morphism
$\dR\to \dR/\mI$ is respectable.
\end{proposition}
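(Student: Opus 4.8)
The plan is to reduce the statement to the already-established case of ordinary finite-dimensional algebras by exploiting the $t$-structure on $\D_{\fd}(\dR)$ coming from non-positivity. First I would recall the setup: $\dR$ is finite-dimensional with $\dR^i=0$ for $i>0$, so the underlying complex of any DG $\dR$-module lives in a bounded-below fashion under truncations, and the forgetful functor $\D_{\fd}(\dR)\to\D(\kk)$ is exact for the standard $t$-structure. In particular, any $\mM\in\D_{\fd}(\dR)$ is obtained from its finitely many nonzero cohomology modules $H^i(\mM)$ — which are finite-dimensional DG $\dR$-modules concentrated in a single degree, hence finite-dimensional modules over the ordinary algebra $H^0(\dR)$, pulled back along $\dR\to H^0(\dR)$ — by a finite sequence of mapping cones (the Postnikov/truncation triangles $\tau^{\le i-1}\mM\to\tau^{\le i}\mM\to H^i(\mM)[-i]$). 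Therefore the finite-dimensional DG $\dR$-modules generate $\D_{\fd}(\dR)$ as a triangulated category.

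Next I would invoke Lemma \ref{fdmod}, which says the triangulated subcategory generated by $\dR/\rde$ contains all finite-dimensional DG modules. Combining this with the previous paragraph, $\dR/\rde$ generates the whole of $\D_{\fd}(\dR)$. Now, for an arbitrary DG ideal $\mI\subseteq\mJ_{+}$, I need to show that the objects $\bR(q)_*\mT$ for $\mT\in\D_{\fd}(\dR/\mI)$ generate $\D_{\fd}(\dR)$, where $q:\dR\to\dR/\mI$. Since $\mI\subseteq\mJ_{+}$, there is a surjection of DG algebras $\dR/\mI\epi\dR/\mJ_{+}$, and by Proposition \ref{eqsemisimple} (applied via Lemma \ref{quasi-is}) the DG algebra $\dR/\mJ_{+}$ is semisimple, so $\dR/\mJ_{+}\in\D_{\fd}(\dR/\mI)$. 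Its restriction to $\dR$ along $q$ is exactly the module $\dR/\mJ_{+}=\dR/\rde$ (the restriction functor is just "regard a $\dR/\mI$-module as a $\dR$-module", so no derived correction is needed, $\dR/\rde$ being already a genuine DG $\dR$-module). Hence $\dR/\rde$ lies in the subcategory generated by $\{\bR q_*\mT\}$, and since $\dR/\rde$ generates $\D_{\fd}(\dR)$, the morphism $q:\dR\to\dR/\mI$ is respectable.

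The main obstacle — really the only subtle point — is verifying cleanly that every object of $\D_{\fd}(\dR)$ is built from its cohomology by finitely many cones, i.e. that the non-positivity hypothesis does give a well-behaved $t$-structure on $\D_{\fd}(\dR)$ with exact forgetful functor to $\D(\kk)$, and that the cohomology modules $H^i(\mM)$, a priori only DG $\dR$-modules, are pulled back from $H^0(\dR)$ and hence are finite-dimensional modules over an ordinary algebra in the sense needed for Lemma \ref{fdmod} to apply. This is exactly the structure the paper already alludes to in the paragraph preceding the statement ("the triangulated category $\D_{\fd}(\dR)$ has a t-structure such that the forgetful functor $\dots$ is exact"), so I would treat it as available; once it is in hand, the rest is a short two-step generation argument exactly paralleling the proof of the ordinary-algebra case. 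I would close by remarking that the passage from $\mJ_{+}$ to an arbitrary DG ideal $\mI\subseteq\mJ_{+}$ costs nothing, since respectability only requires that the generators $\bR q_*\mT$ span $\D_{\fd}(\dR)$ and we have exhibited one such generator, namely $\dR/\rde$.
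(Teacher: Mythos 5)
Your proposal is correct and follows essentially the same route as the paper: the paper's (very terse) argument is exactly to use the t-structure coming from non-positivity to build any object of $\D_{\fd}(\dR)$ from its finite-dimensional cohomology modules by mapping cones, then apply Lemma \ref{fdmod} so that $\dR/\mJ_{+}$ generates $\D_{\fd}(\dR)$, and finally observe that $\dR/\mJ_{+}$ is the restriction along $\dR\to\dR/\mI$ of an object of $\D_{\fd}(\dR/\mI)$. Your write-up merely makes explicit the details (truncation triangles, the fact that $H^i(\mM)$ is a finite-dimensional $H^0(\dR)$-module regarded as a DG $\dR$-module) that the paper leaves implicit.
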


\subsection{Smooth finite-dimensional DG algebras and twisted tensor products}
In this section, we consider the case of an arbitrary finite-dimensional DG algebra. The main problem here is that there are examples of  finite-dimensional DG algebras $\dR$
and a cohomologically finite-dimensional DG $\dR$\!--module $\mM$ that does not have a finite-dimensional model (see \cite{Ef}). This means that the DG module $\mM$ is not quasi-isomorphic to
any finite-dimensional DG $\dR$\!--module. In particular, we cannot conclude that the semisimple DG module $\dR/\mJ_{+}$ generates the whole category $\D_{\fd}(\dR).$

Let $\dR$ be a finite-dimensional DG algebra.
Consider, as above, the radical $\rd$ of the algebra $R$ and  denote by $\mJ_p$ the internal DG ideals $(\rd^p)_{-}$ for $p=1,\ldots, n.$
We obtain  a chain of  DG ideals  $\rdi=\mJ_{1}\supseteq\cdots\supseteq\mJ_n=0.$
Consider  DG $\dR$\!--modules $\mM_p=\dR/\mJ_p$ for $p=1,\ldots, n$ as objects of the DG category $\Mod \dR$ of all right DG $\dR$\!--modules.
Denote by
$\dE$  the DG algebra of endomorphisms $\dEnd_{\dR}(\mM)$ of the DG $\dR$\!--module $\mM=\bigoplus_{p=1}^{n} \mM_p$ in $\Mod \dR.$

Let us take the right DG $\dE$\!--module $\mP_n=\dHom_{\dR}(\mM, \mM_n).$ It is semi-projective, and  we have $\dEnd_{\dE}(\mP_n)\cong\dR.$
Thus, the DG $\dE$\!--module $\mP_n$ is actually a DG $\dR\hy\dE$\!--bimodule, and it induces two functors
\[
(-)\stackrel{\bL}{\otimes}_{\dR} \mP_n: \D(\dR)\lto\D(\dE)\quad\text{and}\quad
\bR\Hom_{\dE}(\mP_n, -):\D(\dE)\lto\D(\dR)
\]
that are adjoint to each other. The DG $\dE$\!--module $\mP_n$ is perfect and, hence, the derived functor $(-)\stackrel{\bL}{\otimes}_{\dR} \mP_n$  sends perfect modules to perfect ones.
Thus, the $\dR\hy\dE$\!--bimodule $\mP_n$ gives a functor
\[
(-)\stackrel{\bL}{\otimes}_{\dR} \mP_n: \prf\dR\lto\prf\dE.
\]

The following theorem was proved in \cite{Or20}.

\begin{theorem}\cite[Th. 2.19]{Or20}\label{DGalgebra}
Let $\dR$ be a finite-dimensional DG algebra of nilpotency index $n$ and let
$\dE=\dEnd_{\dR}(\bigoplus_{p=1}^{n} \mM_p)$ be the DG endomorphism algebra defined above. Then the following properties hold:
\begin{enumerate}
\item[1)] The functors  $\prf\dR\to\prf\dE,\;  \D(\dR)\to\D(\dE),$ given by $(-)\stackrel{\bL}{\otimes}_{\dR} \mP_n,$ are fully faithful.
\item[2)] The triangulated category $\prf \dE$ has a full semi-exceptional collection.


\item[3)] The triangulated category $\prf \dE$ is regular and there is an equivalence
$
\prf\dE\stackrel{\sim}{\to}\D_{\fd}(\dE).
$
\item[4)] If the semisimple part $\dS_{+}=\dR/\rde$ is separable, then the DG algebra $\dE$ is smooth.
\item[5)] If $\dR$ is smooth, then $\prf\dR$ is an admissible subcategory of $\prf\dE.$
\end{enumerate}
\end{theorem}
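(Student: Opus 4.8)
The plan is to organize everything around the $\dR$--$\dE$--bimodule $\mP_n$. Observe first that $\mP_n=\dHom_{\dR}(\mM,\mM_n)$ is precisely the projective right $\dE$--module $e_n\dE$ attached to the summand $\mM_n=\dR$ of $\mM$; hence $\mP_n$ is perfect over $\dE$ and the structure map $\dR\to\dEnd_{\dE}(\mP_n)=\dEnd_{\dR}(\mM_n)$ is the isomorphism recorded before the theorem. For 1), the functor $\Phi=(-)\stackrel{\bL}{\otimes}_{\dR}\mP_n\colon\D(\dR)\to\D(\dE)$ is left adjoint to $\Psi=\bR\Hom_{\dE}(\mP_n,-)$; since $\mP_n$ is compact in $\D(\dE)$, both $\Phi$ and $\Psi$ commute with arbitrary coproducts. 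The unit $\dR\to\Psi\Phi(\dR)=\bR\Hom_{\dE}(\mP_n,\mP_n)$ is the isomorphism just mentioned, so the full subcategory of $\D(\dR)$ on which the unit is invertible is triangulated, closed under coproducts, and contains the compact generator $\dR$; hence it is all of $\D(\dR)$, and $\Phi$ is fully faithful. Restricting to compact objects, and using that $\Phi(\dR)=\mP_n$ lies in $\prf\dE$ while $\prf\dR$ is the thick subcategory generated by $\dR$, we obtain the fully faithful functor $\prf\dR\to\prf\dE$. This proves 1).

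For 2) and 3) I would analyze the structure of $\dE$ through the complete set of orthogonal idempotents $e_1,\dots,e_n$ coming from $\mM=\bigoplus_p\mM_p$, for which $e_q\dE e_p=\dHom_{\dR}(\mM_p,\mM_q)$, together with the chain of surjections $\mM_n\twoheadrightarrow\cdots\twoheadrightarrow\mM_1$ attached to $0=\mJ_n\subseteq\cdots\subseteq\mJ_1=\rdi$ and the relations $\mJ_p\mJ_q\subseteq\mJ_{p+q}$. A direct computation shows $\dHom_{\dR}(\mM_p,\mM_q)\cong\dR/\mJ_q$ when $p\ge q$, while for $p<q$ it takes values in the radical, so that $\dE$ is ``triangular up to the radical'' for this ordering. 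Refining $e_1,\dots,e_n$ to primitive idempotents and using that the radical layers $\mJ_{p-1}/\mJ_p$ of $\dR$ (with $\mJ_0:=\dR$) are modules over the semisimple DG algebra $\dS=\dR/\rdi$, one extracts from this triangular structure a semiorthogonal decomposition $\prf\dE=\langle\mB_1,\dots,\mB_N\rangle$ with each $\mB_i$ quasi-equivalent to $\prfdg D_i$ for a finite-dimensional division $\kk$--algebra $D_i$; this is the full semi-exceptional collection of 2). Such a category is generated in finitely many steps, so $\dE$ is regular, and since $\dE$ is finite-dimensional it is also proper; Proposition \ref{BonVdB} then yields $\prf\dE\cong\D_{\fd}(\dE)$, which is 3).

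For 4), if the semisimple part $\dS_{+}=\dR/\rde$ (equivalently $\dS=\dR/\rdi$) is separable, then each $D_i$ above is separable over $\kk$, so each $\prfdg D_i$ is smooth, and the semiorthogonal decomposition of 2) exhibits $\prfdg\dE$ as an iterated gluing of the $\prfdg D_i$ along perfect DG bimodules; a gluing of smooth DG categories along perfect bimodules is smooth (see \cite{Or16,Or19,Or20}), hence $\dE$ is smooth. For 5), assume $\dR$ is smooth; being finite-dimensional it is proper, so $\prf\dR\cong\D_{\fd}(\dR)$ is a smooth and proper --- hence saturated --- DG category. A fully faithful exact functor from a saturated DG category into a proper DG category is admissible, and $\prf\dE$ is proper; applying this to $\Phi\colon\prf\dR\hookrightarrow\prf\dE$ from 1) shows that $\prf\dR$ is admissible in $\prf\dE$, which is 5).

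The main obstacle is the structural input for 2) and 3): producing the semiorthogonal decomposition of $\prf\dE$ into pieces of the form $\prfdg D_i$. It cannot be read off naively from the idempotents $e_p$, because the diagonal blocks $e_p\dE e_p\cong\dEnd_{\dR}(\mM_p)\cong\dR/\mJ_p$ are themselves \emph{not} semisimple DG algebras. One must pass to primitive idempotents and to the radical layers of $\dR$, keep track of all higher $\dHom$--complexes between the resulting perfect $\dE$--modules, and verify that the order induced by the radical filtration is genuinely semiorthogonal and terminates with the copy of $\prf\dR$ embedded as in 1). Making this precise --- in particular checking that the relevant layer objects are perfect over $\dE$ --- is where the real work lies.
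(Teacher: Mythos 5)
First, a remark on the comparison itself: the paper does not prove Theorem \ref{DGalgebra} — it is quoted from \cite[Th.~2.19]{Or20} — so your argument has to stand on its own. Part 1) of your proposal does stand: identifying $\mP_n$ with $e_n\dE$, so that it is perfect over $\dE$ with $\dEnd_{\dE}(\mP_n)\cong\dR$, and then running the standard argument (unit is an isomorphism on the compact generator $\dR$, both adjoint functors preserve coproducts, restrict to compacts) is a complete and correct proof of 1). Likewise your deductions are sound \emph{given} 2): a finite semi-exceptional collection yields strong generation, $\dE$ is finite-dimensional hence proper, and Proposition \ref{BonVdB} gives 3); the saturatedness argument for 5) is the standard one and does not even need the collection.

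The genuine gap is exactly where you yourself place it: statement 2), which is the substance of the theorem, is never proved. You do not construct the semi-exceptional collection; you only indicate that it should come from refining the idempotents $e_p$ and from the radical filtration, and you concede that verifying semiorthogonality, perfectness of the layer objects over $\dE$, and the identification of the endomorphism algebras ``is where the real work lies.'' Since 4) additionally needs to know that the division algebras $D_i$ occurring in the pieces are (or at least are separable whenever) those of $\dS_{+}=\dR/\rde$, parts 2)--4) remain unestablished. Moreover, one structural claim you do make is false as stated: for $p<q$ one has $\dHom_{\dR}(\mM_p,\mM_q)\cong\{x\in\dR \mid x\mJ_p\subseteq\mJ_q\}/\mJ_q$, and this need not lie in the radical — if $R$ has a semisimple direct factor, its central idempotent annihilates every $\mJ_p$ and gives a non-radical element — so the ``triangular up to the radical'' ordering has to be set up more carefully. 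What is missing is precisely the construction carried out in \cite{Or20}: one uses the filtration of $\dR$ by the DG ideals $\mJ_p$, the fact that the subquotients $\mJ_p/\mJ_{p+1}$ are DG modules over the semisimple DG algebra $\dR/\rdi$, and the induced filtrations of the projective DG $\dE$\!--modules $e_p\dE=\dHom_{\dR}(\mM,\mM_p)$, to produce the semiorthogonal pieces and to see that their endomorphism DG algebras are quasi-isomorphic to the semisimple data of $\dR/\rde$. Until that construction is supplied, your text is an outline of 2)--4) rather than a proof.
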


This theorem gives us a regular (or smooth) realization for any finite-dimensional DG algebra $\dR$ and allows us to prove the following proposition.

\begin{proposition}\label{newreg}
Let $\dR$ be a finite-dimensional DG algebra and let $\mI\subseteq \mJ_{+}$ be a DG ideal.
Assume that $\mf: \dR\to \dR/\mI$ is a pp-morphism and the DG algebra $\dR/\mI$ is regular. Then the morphism $\mf$ is respectable and the DG algebra $\dR$ is also regular.
If, in addition, the semisimple DG algebra $\dR/\mJ_{+}$ is separable, then $\dR$ is smooth.
\end{proposition}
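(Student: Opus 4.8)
The plan is to combine Theorem \ref{DGalgebra} with Corollary \ref{subfd} and Proposition \ref{regcat}. First I would invoke Theorem \ref{DGalgebra} applied to the DG algebra $\dR/\mI$: since $\dR/\mI$ is a finite-dimensional DG algebra, the bimodule $\mP_n$ constructed there produces a fully faithful functor $\prf(\dR/\mI)\hookrightarrow\prf\dE'$ into the perfect modules over the associated endomorphism DG algebra $\dE'=\dEnd_{\dR/\mI}(\bigoplus \mM_p)$, and by parts 3) and 4) of that theorem $\dE'$ is regular (and smooth when the separable hypothesis is imposed; note that the semisimple part of $\dR/\mI$ is a quotient of that of $\dR$, hence separable whenever $\dR/\mJ_+$ is). Composing with the pp-morphism $\mf:\dR\to\dR/\mI$ — which gives $\bL\mf^*:\prf\dR\to\prf(\dR/\mI)$ with right adjoint $\bR\mf_*$ landing in $\prf$ — one gets a regular (resp.\ smooth) realization of $\dR$ in the sense of the relevant definition. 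Actually it is cleaner to apply Theorem \ref{DGalgebra} directly to $\dR$ itself: it always furnishes a regular realization $\prf\dR\hookrightarrow\prf\dE$, and a smooth one when $\dR/\mJ_+$ is separable. So $\dR$ has a regular (or smooth) realization unconditionally in the first case and under the separability hypothesis in the second.

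Next I would establish that $\mf$ is respectable. By Corollary \ref{subfd}, the subcategory of $\D_{\fd}(\dR)$ generated by $\dR/\rde$ contains $\prf\dR$, and for any DG ideal $\mI\subseteq\mJ_+$ the morphism $\dR\to\dR/\mI$ is acceptable, i.e.\ the subcategory generated by the objects $\bR\mf_*\mT$ for $\mT\in\D_{\fd}(\dR/\mI)$ contains $\prf\dR$. To upgrade acceptability to respectability I would use the hypothesis that $\dR/\mI$ is regular: since $\mf$ is a pp-morphism, $\bR\mf_*$ restricts to a functor $\prf(\dR/\mI)\to\prf\dR$, and since $\dR/\mI$ is regular and proper, Proposition \ref{BonVdB} gives $\prf(\dR/\mI)\cong\D_{\fd}(\dR/\mI)$; hence $\bR\mf_*$ sends $\D_{\fd}(\dR/\mI)$ into $\prf\dR$. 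Thus the subcategory of $\D_{\fd}(\dR)$ generated by the objects $\bR\mf_*\mT$ lies inside $\prf\dR$, and by acceptability it also contains $\prf\dR$; so it equals $\prf\dR$. But $\dR/\mI$ is regular and proper, so $\D_{\fd}(\dR/\mI)\subseteq\prf(\dR/\mI)$, and Lemma \ref{ppr} applied to $\mf$ (a respectable pp-morphism once we know the generated subcategory is all of $\D_{\fd}(\dR)$) — wait, that requires respectability on the nose. The point I want is: the generated subcategory is simultaneously contained in $\prf\dR$ and contains $\prf\dR$; to conclude it is all of $\D_{\fd}(\dR)$ I then apply Lemma \ref{ppr}, which yields $\D_{\fd}(\dR)\cong\prf\dR$, and retroactively this shows every object of $\D_{\fd}(\dR)$ is perfect, so being generated by $\bR\mf_*\mT$'s (which it is, since those generate at least $\prf\dR=\D_{\fd}(\dR)$) makes $\mf$ respectable.

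Finally, with $\D_{\fd}(\dR)\cong\prf\dR$ in hand and a regular (resp.\ smooth, under separability) realization of $\dR$ available from Theorem \ref{DGalgebra}, Proposition \ref{regcat} immediately gives that $\dR$ is regular (resp.\ smooth). The main obstacle is the middle step: closing the circle between acceptability (which Corollary \ref{subfd} hands us for free) and respectability, since the possible failure of cohomologically finite-dimensional modules to have finite-dimensional models — the phenomenon flagged right before this proposition via Efimov's example — means one cannot argue directly that $\dR/\mJ_+$ generates $\D_{\fd}(\dR)$. The trick is to route around it: use the pp-morphism and regularity of $\dR/\mI$ to force the generated subcategory inside $\prf\dR$, then invoke properness of $\dR$ together with Lemma \ref{ppr} to identify $\prf\dR$ with $\D_{\fd}(\dR)$, after which respectability is automatic.
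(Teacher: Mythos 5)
Your first step and your overall strategy start out parallel to the paper: by Corollary \ref{subfd} the subcategory generated by the objects $\bR\mf_*\mT$ contains $\prf\dR$, and since $\dR/\mI$ is regular and proper, Proposition \ref{BonVdB} plus the pp-hypothesis force $\bR\mf_*(\D_{\fd}(\dR/\mI))\subseteq\prf\dR$, so that subcategory equals $\prf\dR$. The gap is in the next move. You invoke Lemma \ref{ppr} to conclude $\D_{\fd}(\dR)\cong\prf\dR$, but Lemma \ref{ppr} has respectability of $\mf$ as a hypothesis, and respectability is exactly the assertion that the objects $\bR\mf_*\mT$ generate all of $\D_{\fd}(\dR)$ --- which is what you are trying to prove. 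Knowing that the generated subcategory equals $\prf\dR$ does not rule out $\D_{\fd}(\dR)$ being strictly larger than $\prf\dR$; this is precisely the Efimov-type phenomenon (cohomologically finite-dimensional modules with no finite-dimensional model) that you yourself flag, and your proposed ``route around it'' is circular rather than an actual argument. With that step unavailable, the appeal to Proposition \ref{regcat} in your last paragraph has no input.

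The paper closes this gap by using the realization of Theorem \ref{DGalgebra} in a more hands-on way, and in the opposite logical order: regularity of $\dR$ is proved \emph{first}, and $\D_{\fd}(\dR)\cong\prf\dR$ (hence respectability) is deduced \emph{afterwards} from Proposition \ref{BonVdB}. Concretely, one looks at the right adjoint $\bR\Hom_{\dE}(\mP_n,-)\colon\D_{\fd}(\dE)\to\D_{\fd}(\dR)$ of the embedding $\prf\dR\hookrightarrow\prf\dE$; since $\D_{\fd}(\dE)\cong\prf\dE$ is generated by $\dE$ and $\bR\Hom_{\dE}(\mP_n,\dE)=\bigoplus_p\mM_p$ consists of finite-dimensional modules, Lemma \ref{fdmod} shows its image generates exactly the subcategory generated by $\dR/\mJ_{+}$, which by your (and the paper's) first step is $\prf\dR$. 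Hence the right adjoint lands in $\prf\dR$, so $\prf\dR$ is admissible in $\prf\dE$; transferring the strong generator $\dE$ along the adjunction makes $\bR\Hom_{\dE}(\mP_n,\dE)$ a strong generator of $\prf\dR$, so $\dR$ is regular, then Proposition \ref{BonVdB} gives $\D_{\fd}(\dR)\cong\prf\dR$ and respectability follows, and in the separable case smoothness comes from admissibility in the smooth category $\prf\dE$ via \cite[3.24]{LS}. So the missing idea in your proposal is this analysis of the right adjoint of the concrete realization (reducing its image to the finite-dimensional modules $\mM_p$), which is what lets one avoid ever having to show directly that $\dR/\mJ_{+}$ generates $\D_{\fd}(\dR)$.
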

\begin{proof}
Let $\D\subseteq\D_{\fd}(\dR)$ be the subcategory generated by the DG $\dR$\!--module $\dR/\mJ_{+}.$ By Corollary \ref{subfd}, the subcategory $\D$  contains the category $\prf\dR.$
On the other hand, since $\dR/\mI$ is regular, the DG  $\dR/\mI$\!--module $\dR/\mJ_{+}$ belongs to $\prf\dR/\mI.$ Hence, $\dR/\mJ_{+}$  is also perfect as a DG $\dR$\!--module, because $\mf: \dR\to \dR/\mI$ is a pp-morphism.
Thus, we obtain an equivalence $\D\cong\prf\dR.$

Let us now consider the functor
\[
\bR\Hom_{\dE}(\mP_n, -):\D_{\cf}(\dE)\lto\D_{\cf}(\dR).
\]
The category $\D_{\cf}(\dE)$ is equivalent to $\prf\dE.$ Consider the subcategory $\D'\subseteq\D_{\cf}(\dR)$ that is generated by the image of the functor
$\bR\Hom_{\dE}(\mP_n, -).$ Actually, it is generated by the DG-modules $\mM_p=\dR/\mJ_p$ for $p=1,\ldots, n,$ because $\bR\Hom_{\dE}(\mP_n, \dE)=\bigoplus_{p=1}^{n} \mM_p.$
Thus, by Lemma \ref{fdmod}, the subcategory $\D'$ is generated by $\dR/\mJ_{+}\cong \dR/\mJ_{-}=M_1$ and it coincides with the subcategory $\D\cong \prf\dR.$
This implies that the functor $\bR\Hom_{\dE}(\mP_n, -)$ sends $\prf\dE$ to $\prf\dR.$ Hence, the full embedding $\prf\dR\hookrightarrow \prf\dE$ has a right adjoint.
The DG algebra $\dE$ is regular, i.e. $\dE$ is a strong generator for $\prf\dE.$ Therefore, the object $\bR \Hom_{\dE} (\mP_n, \dE)$ is a strong generator
for $\prf\dR,$ and $\dR$ is regular too. Thus, $\prf\dR$ is equivalent to $\D_{\cf}(\dR)$ and $f$ is respectable.

Suppose that the semisimple DG algebra $\dR/\mJ_{+}$ is separable. Theorem \ref{DGalgebra} implies that the DG algebra $\dE$ is smooth.
Finally, by \cite[3.24]{LS}, the DG algebra $\dR$ is smooth too because $\prf\dR$ is an admissible subcategory of a smooth category.
\end{proof}

Since $\dR/\mJ_{+}$ is regular as a semisimple DG algebra, Propositions \ref{newreg} and \ref{BonVdB} imply the corollary.

\begin{corollary} A finite-dimensional DG algebra  $\dR$ is regular if and only if the DG $\dR$\!--module $\dR/\mJ_{+}$ is perfect.
If, in addition, the semisimple DG algebra $\dR/\mJ_{+}$ is separable, then $\dR$ is smooth.
\end{corollary}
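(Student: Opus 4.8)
The plan is to obtain both implications from Propositions \ref{newreg} and \ref{BonVdB}, applying Proposition \ref{newreg} with the DG ideal $\mI=\mJ_{+}$ itself (the hypothesis $\mI\subseteq\mJ_{+}$ there holding trivially). Two preliminary remarks make this legitimate. First, a finite-dimensional DG algebra is automatically proper, since $\dim H^{*}(\dR)\le\dim R<\infty$. Second, the quotient $\dR/\mJ_{+}$ is a semisimple DG algebra by Proposition \ref{eqsemisimple}, because its underlying algebra $R/J$ is semisimple; consequently $\dR/\mJ_{+}$ is regular, and it is smooth exactly when it is separable.

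For the ``only if'' direction, suppose $\dR$ is regular. Being finite-dimensional it is also proper, so Proposition \ref{BonVdB} provides an equivalence $\D_{\fd}(\dR)\cong\prf\dR$. The DG module $\dR/\mJ_{+}$ is finite-dimensional, hence cohomologically finite-dimensional, so it lies in $\D_{\fd}(\dR)$ and therefore in $\prf\dR$; that is, $\dR/\mJ_{+}$ is perfect.

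For the ``if'' direction, suppose the DG $\dR$-module $\dR/\mJ_{+}$ is perfect. By the standard reformulation of the pp-condition --- a morphism $\mf\colon\dR\to\dS$ is a pp-morphism precisely when $\dS$ is perfect as a right DG $\dR$-module --- the projection $\mf\colon\dR\to\dR/\mJ_{+}$ is a pp-morphism. Since $\dR/\mJ_{+}$ is regular, Proposition \ref{newreg} with $\mI=\mJ_{+}$ applies and yields that $\mf$ is respectable and that $\dR$ is regular. If moreover $\dR/\mJ_{+}$ is separable, the same proposition gives that $\dR$ is smooth, which finishes the argument.

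I do not expect a genuine obstacle here: the corollary is a repackaging of Proposition \ref{newreg}, whose proof already carries the substantive content --- the regular (or smooth) realization of an arbitrary finite-dimensional DG algebra $\dR$ through the endomorphism DG algebra $\dE=\dEnd_{\dR}(\bigoplus_{p=1}^{n}\mM_p)$ of Theorem \ref{DGalgebra}, followed by transport of a strong generator back along $\bR\Hom_{\dE}(\mP_n,-)$. The only points that warrant an explicit line are the identification of ``$\dR/\mJ_{+}$ perfect over $\dR$'' with ``$\mf$ is a pp-morphism'', and the fact that a semisimple DG algebra is regular (smooth when separable), which follows from Proposition \ref{eqsemisimple} together with the description of $\prfdg$ of a semisimple DG algebra as a finite sum of categories $\prfdg D_i$.
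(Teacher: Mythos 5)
Your argument is correct and is essentially the paper's own: the paper derives the corollary in one line by noting that $\dR/\mJ_{+}$ is regular as a semisimple DG algebra and invoking Propositions \ref{newreg} (for the ``if'' direction and smoothness, with $\mI=\mJ_{+}$) and \ref{BonVdB} (for the ``only if'' direction), exactly as you do. Your explicit remarks — that finite-dimensionality gives properness, that perfectness of $\dR/\mJ_{+}$ as a right DG $\dR$-module is the pp-condition for the quotient map, and that semisimplicity of $\dR/\mJ_{+}$ follows from Proposition \ref{eqsemisimple} — merely spell out steps the paper leaves implicit.
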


Now we can apply these results to DG twisted tensor products of finite-dimensional DG algebras.

\begin{theorem}\label{DGfinpr}
Let $\dR$ be a finite-dimensional DG algebra. Let $\dA$ and $\dB$ be finite-dimensional $\dR$\!--rings  such that $\dA$ has an augmentation $\pi_A: \dA\to \dR$ with an ideal $\mI_{\dA}$ and $\dB$ is semi-flat as a left DG $\dR$\!--module.
Let $\dC^{\nabla}=\dA\otimes_{\dR}^{\nabla, \tau}\dB$ be a DG twisted tensor product.
 Assume that  $\dA, \dB$ are regular and the ideal $\mI_{\dA}\otimes_{\dR}\dB\subset \dC^{\nabla}$ is contained in the external radical $(\mJ_{\dC^{\nabla}})_{+}.$
Then, the DG algebra $\dC^{\nabla}=\dA\otimes_{\dR}^{\nabla, \tau}\dB$  is also regular. If, in addition, the semisimple DG algebra $\dB/(\mJ_{\dB})_{ +}$ is separable, then $\dC^{\nabla}$ is smooth.
\end{theorem}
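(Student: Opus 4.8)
The plan is to reduce the statement to Proposition~\ref{newreg}: that proposition applies to a finite-dimensional DG algebra equipped with a DG ideal contained in its external radical, a pp-morphism onto the corresponding quotient, and regularity of that quotient, and it already builds in (via Theorem~\ref{DGalgebra}) the regular/smooth realization one would otherwise need. Here $\dC^{\nabla}$ plays the role of the ambient algebra and the relevant morphism is the projection $p_B\colon\dC^{\nabla}\to\dB$ induced by the augmentation $\pi_A$.

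First I would record the bookkeeping. The underlying graded space of $\dC^{\nabla}$ is $A\otimes_{R}B$, so $\dC^{\nabla}$ is finite-dimensional, hence proper. The augmentation splits $\dA\cong\dR\oplus\mI_{\dA}$ as a left DG $\dR$-module, so as a graded space $\dC^{\nabla}=\dA\otimes_{\dR}\dB\cong\dB\oplus(\mI_{\dA}\otimes_{\dR}\dB)$, and $p_B$ is projection onto the first summand. Thus $p_B$ is surjective with kernel the two-sided DG ideal $K:=\mI_{\dA}\otimes_{\dR}\dB$ (a DG ideal, as the kernel of a DG algebra morphism), and $\dB\cong\dC^{\nabla}/K$; by hypothesis $K\subseteq(\mJ_{\dC^{\nabla}})_{+}$.

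Next I would verify that $p_B$ is a pp-morphism, i.e.\ that $\dB$ is perfect as a right DG $\dC^{\nabla}$-module. As recorded in the proof of Theorem~\ref{DGprod}, $\dC^{\nabla}$ is semi-flat as a left DG $\dA$-module (through $i_A$ it is the module $\dA\otimes_{\dR}\dB$, with $\dB$ semi-flat over $\dR$) and $\dR\otimes_{\dA}\dC^{\nabla}\cong\dR\otimes_{\dA}\dA\otimes_{\dR}\dB\cong\dB$. Since $\dA$ is regular and proper, Proposition~\ref{BonVdB} gives $\dR\in\prf\dA$ as a right DG $\dA$-module, and therefore $\dB\cong\dR\stackrel{\bL}{\otimes}_{\dA}\dC^{\nabla}$ lies in $\prf\dC^{\nabla}$; hence $p_B$ is a pp-morphism. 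Now Proposition~\ref{newreg}, applied to $\dC^{\nabla}$, the DG ideal $K\subseteq(\mJ_{\dC^{\nabla}})_{+}$, the pp-morphism $p_B\colon\dC^{\nabla}\to\dC^{\nabla}/K=\dB$, and the regularity of $\dB$, shows that $p_B$ is respectable and that $\dC^{\nabla}$ is regular, and it also gives that $\dC^{\nabla}$ is smooth as soon as its semisimple part $\dC^{\nabla}/(\mJ_{\dC^{\nabla}})_{+}$ is separable.

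For the smoothness clause I would identify the two semisimple parts. Write $J$ for the Jacobson radical of the underlying algebra of $\dC^{\nabla}$. All algebras here are finite-dimensional and $p_B$ is surjective, so $p_B(J)$ is the radical of $\dB$ (the radical of an Artinian quotient is the image of the radical), and, $p_B$ being a DG morphism, it carries $(\mJ_{\dC^{\nabla}})_{+}=J+d_{\dC^{\nabla}}(J)$ onto $(\mJ_{\dB})_{+}$; on the other hand $K=\Ker p_B\subseteq(\mJ_{\dC^{\nabla}})_{+}$ forces $p_B^{-1}\bigl((\mJ_{\dB})_{+}\bigr)=(\mJ_{\dC^{\nabla}})_{+}$, so $p_B$ descends to an isomorphism $\dC^{\nabla}/(\mJ_{\dC^{\nabla}})_{+}\cong\dB/(\mJ_{\dB})_{+}$. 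Hence separability of $\dB/(\mJ_{\dB})_{+}$ is exactly separability of $\dC^{\nabla}/(\mJ_{\dC^{\nabla}})_{+}$, and Proposition~\ref{newreg} then yields smoothness of $\dC^{\nabla}$. I expect this last radical identification --- together with the verification that $\dC^{\nabla}$ is semi-flat over $\dA$ with $\dR\otimes_{\dA}\dC^{\nabla}\cong\dB$ --- to be the only points carrying real content; everything else is a direct invocation of Proposition~\ref{newreg}, which itself absorbs the work of Theorem~\ref{DGalgebra}.
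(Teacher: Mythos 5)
Your proposal is correct and follows essentially the same route as the paper: verify that $p_B$ is a pp-morphism using semi-flatness of $\dC^{\nabla}$ over $\dA$ together with $\dR\in\prf\dA$ (from regularity of $\dA$ and Proposition \ref{BonVdB}), invoke Proposition \ref{newreg} for respectability and regularity, and identify $\dC^{\nabla}/(\mJ_{\dC^{\nabla}})_{+}\cong\dB/(\mJ_{\dB})_{+}$ via the radical-plus-kernel argument to transfer separability and conclude smoothness. Your extra bookkeeping (the splitting $\dC^{\nabla}\cong\dB\oplus(\mI_{\dA}\otimes_{\dR}\dB)$ and the remark that $K$ is a DG ideal as the kernel of a DG morphism) is consistent with, and only slightly more explicit than, the paper's argument.
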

\begin{proof} Since $\dA$ is regular and $\dR$ is proper, the DG $\dA$\!--module $\dR$ belongs to $\prf\dA.$ Thus, $\pi_A$ is a pp-morphism.
Sine $\dC^{\nabla}$ is semi-flat as the left DG $\dA$\!--module, the DG $\dC^{\nabla}$\!--module
$\dB\cong \dR\otimes_{\dA}\dC^{\nabla}$ is perfect.
This means that the morphism $p_B: \dC^{\nabla}\to \dB$ is also a pp-morphism. Applying Proposition \ref{newreg} to the morphism $p_B,$ we obtain that
the morphism $p_B: \dC^{\nabla}\to\dB$ is respectable and the DG algebra $\dC^{\nabla}$ is regular.

The preimage of the radical $J_{B}\subset\dB$ under the surjective morphism $p_B$ is the sum of the radical $J_{C}$ and $ \Ker p_B.$ Since $\Ker p_B=\mI_{\dA}\otimes_{\dR}\dB$ is contained in the external radical $(\mJ_{\dC^{\nabla}})_{+},$ the preimage of the external radical $(\mJ_{\dB})_{+}$
coincides with $(\mJ_{\dC^{\nabla}})_{+}.$
When $\dC^{\nabla}/(\mJ_{\dC^{\nabla}})_{+}\cong\dB/(\mJ_{\dB})_{+}$ is separable, the
DG algebra $\dC^{\nabla}$ is also smooth.
\end{proof}

\section{Algebras and DG algebras with two simple modules}

\label{ExampAlg}

\subsection{Quivers with two vertices and Kronecker algebras}
In this section, we consider some smooth basic algebras (and also DG algebras) with two simple modules, i.e $\kk$\!--algebras $R$ such that the semisimple part $R/J$ is isomorphic
to the algebra $\kk\times \kk.$ The main goal is, on the one hand, to demonstrate how such algebras can be obtained from simpler ones using twisted tensor product, and, on the other hand, to construct new smooth algebras.

Let $Q$ be a quiver. It consists of the data
$(Q_0, Q_1, s, t),$
where $Q_0, Q_1$ are finite sets of vertices and arrows, respectively, while
$s, t : Q_1\to  Q_0$
are maps associating to each arrow its source and target.
The path algebra $\kk Q$ is  determined
by the generators $e_q$ for $q\in Q_0$ and $a$ for $a\in Q_1$
with the following  relations:
$
e^2_q = e_q,\quad e_r e_q = 0,$
 when
 $r\ne q,$
and
$
e_{t(a)} a = a e_{s(a)} = a.
$
As a $\kk$\!--vector space,
the path algebra $\kk Q$ has a basis consisting of the set of all paths in $Q,$
where a path $\overline{p}$ is a possibly empty sequence $a_{l} a_{l-1}\cdots a_1$ of compatible arrows,
i.e. $s(a_{i+1})=t(a_i)$ for all $i=1,\dots, l-1.$
For an empty path we have to choose a vertex of the quiver.
The composition of two paths $\overline{p}_1$ and $\overline{p}_2$
is defined as $\overline{p}_2 \overline{p}_1$ if they are compatible
and as $0$ if they are not compatible.

To obtain a basic finite-dimensional algebra, we have to consider a quiver with
relations.
A relation on a quiver $Q$ is a subspace of $\kk Q$ spanned by linear
combinations of paths having a common source and a common target, and of length at
least 2.
A quiver with relations is a pair $(Q, I),$ where $Q$ is a quiver and $I$ is a two-sided ideal
of the path algebra $\kk Q$ generated by relations.
The quotient algebra $\kk Q/I$ will be called the quiver algebra of the quiver with relations
$(Q, I).$
It can be shown that any basic finite-dimensional algebra can be realized as an algebra of
 some quiver with relations $(Q, I)$ (see \cite{Ga}).

Let $Q_{n,m}$ be a quiver with two vertices $\bone, \btwo$ and with $n$ arrows $\{c_1,\ldots, c_n\}$  from $\bone$ to $\btwo$ and $m$ arrows $\{b_1,\ldots, b_m\}$
 from  $\btwo$ to $\bone.$
\begin{equation}\label{TwoVQ}
Q_{n,m}=\Bigl[
\xymatrix{
\underset{\btwo}{\bullet}\ar@/_0.5pc/[rr]_{b_1} \ar@{{ }{ }}@/_1pc/[rr]_{\vdots} \ar@/_3pc/[rr]_{b_m} & &
\underset{\bone}{\bullet}\ar@/_0.5pc/[ll]_{c_1}  \ar@{{ }{ }}@/_1.5pc/[ll]_{\vdots} \ar@/_3pc/[ll]_{c_n}
}
\Bigr].
\end{equation}

Let $\kk Q_{n,m}$ be the path algebra of the quiver $Q_{n,m}.$
Denote by $B$ and $C$ the vector spaces generated by the arrows $\{b_1,\ldots, b_m\}$ and $\{c_1,\ldots, c_n\},$ respectively.
\begin{remark} {\rm
We do not consider quivers with loops, because quiver algebras with relations for quivers with loops have infinite global dimensions
(see \cite[Cor. 5.6]{Ig}).
}
\end{remark}

The first block of natural examples of smooth algebras is obtained by considering the path algebras of the quivers $Q_{n,0}$ (or $Q_{0,m}$).
They are called Kronecker quivers, and their path algebras have global dimension $1.$ Denote by $K_n$ the Kronecker algebras $\kk Q_{n,0}$ with natural augmentations of the semisimple part.
It is an easy exercise to check the following statement.

\begin{proposition}
There is an isomorphism of algebras $K_n\cong K_1\otimes^{\mathbf{v}}_S K_{n-1}\cong K_{n-1}\otimes^{\mathbf{v}}_S K_{1},$ where $S=\kk\times\kk$ is the semisimple part and the twisting map $\mathbf{v}$
is defined by formula (\ref{vtwist}).
\end{proposition}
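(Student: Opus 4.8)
The plan is to recognise that $K_n$, as well as the two twisted tensor products in the statement, are all the trivial square-zero extension of the semisimple part $S=\kk\times\kk$ by one and the same $S$-bimodule. Concretely, let $V_n$ denote the $n$-dimensional $S$-bimodule on which $e_{\btwo}$ acts as the identity from the left, $e_{\bone}$ as the identity from the right, and all remaining products with the idempotents vanish. The radical of $K_n=\kk Q_{n,0}$ is exactly the span $V_n$ of the arrows $c_1,\dots,c_n$, each of which satisfies $c_i=e_{\btwo}c_ie_{\bone}$, and every product $c_ic_j$ vanishes because no two of these arrows are composable; hence $K_n$ is the algebra with underlying space $S\oplus V_n$ in which $S$ is a subalgebra, $V_n$ is a two-sided ideal and $V_n^2=0$. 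It therefore suffices to show that $C:=K_1\otimes^{\mathbf v}_S K_{n-1}$ carries precisely this structure; the assertion for $K_{n-1}\otimes^{\mathbf v}_S K_1$ then follows by the same argument with the two factors interchanged.

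First I would pin down the underlying $S$-bimodule of $C$. Write $K_1=S\oplus V_1$ and $K_{n-1}=S\oplus V_{n-1}$ as $S$-bimodules, where $V_1$ and $V_{n-1}$ are the radicals, i.e. the augmentation ideals of the canonical augmentations $\pi\colon K_1\to S$ and $\pi\colon K_{n-1}\to S$. As above, $e_{\bone}$ acts as the identity on $V_1$ and on $V_{n-1}$ from the right, and as zero from the left, so $V_1\otimes_S V_{n-1}=0$. Consequently, via the canonical isomorphism $\phi\colon K_1\otimes_S K_{n-1}\to C$ one gets $C\cong K_1\otimes_S K_{n-1}\cong S\oplus V_1\oplus V_{n-1}$ as $S$-bimodules; in particular $\dim_\kk C=2+1+(n-1)=n+2=\dim_\kk K_n$, and the image $W:=i_A(V_1)\oplus i_B(V_{n-1})\subset C$ of the $S$-sub-bimodule $V_1\oplus V_{n-1}$ (here $i_A\colon K_1\to C$ and $i_B\colon K_{n-1}\to C$ are the structure morphisms) is an $S$-sub-bimodule isomorphic to $V_n$.

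It then remains to verify that $W$ is a two-sided ideal of $C$ with $W^2=0$: once this is known, $C/W\cong S$, the nilpotent ideal $W$ is the radical of $C$, and $C$ is the trivial extension $S\oplus W\cong S\oplus V_n\cong K_n$, the isomorphism of $\kk$-algebras (indeed of $S$-rings, matching augmentations) being induced by any $S$-bimodule isomorphism $W\to V_n$, for instance one sending a basis of $W$ to the arrows $c_1,\dots,c_n$. For $W^2=0$ one checks the four products. Since the radicals of $K_1$ and $K_{n-1}$ square to zero, $i_A(V_1)\cdot i_A(V_1)=i_A(V_1^2)=0$ and $i_B(V_{n-1})\cdot i_B(V_{n-1})=i_B(V_{n-1}^2)=0$. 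For $m\in V_1$ and $m'\in V_{n-1}$, the general identity $(a\otimes 1)(1\otimes b)=a\otimes b$ in a twisted tensor product gives $i_A(m)\,i_B(m')=m\otimes m'$, which lies in the vanishing summand $V_1\otimes_S V_{n-1}$; and $i_B(m')\,i_A(m)=(1\otimes m')(m\otimes 1)=\mathbf v(m'\otimes m)=0$ by the property of the twisting map $\mathbf v$ recorded in Construction \ref{Mtwist}, as $m$ and $m'$ lie in the augmentation ideals of $K_1$ and $K_{n-1}$ respectively. Hence $W\cdot W=0$, which finishes the proof. The only step that is more than formal bookkeeping is this last vanishing $i_B(V_{n-1})\cdot i_A(V_1)=0$, and it is exactly here that the specific shape of $\mathbf v$, rather than that of an arbitrary twisting map, is used.
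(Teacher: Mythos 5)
Your argument is correct and complete: identifying each side as the trivial square-zero extension of $S=\kk\times\kk$ by the $n$-dimensional bimodule spanned by the arrows, computing $K_1\otimes_S K_{n-1}\cong S\oplus V_1\oplus V_{n-1}$ (with $V_1\otimes_S V_{n-1}=0$), and killing the remaining cross-product via $(1\otimes b)(a\otimes 1)=\mathbf{v}(b\otimes a)=0$ for $a\in I_A,\ b\in I_B$ is precisely the direct verification the paper omits, since it states the proposition as an easy exercise with no proof. Nothing further is needed.
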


We can also consider a DG version of such algebras if we assume that each arrow $c_i$ has some degree $d_i\in\ZZ.$ Denote by $K_n[d_1,\dots, d_n]$ the DG algebra
of such a DG quiver. The same arguments give us that any DG algebra $K_n[d_1,\dots, d_n]$ is equal to $K_1[d_n]\otimes^{\mathbf{v}}_S K_{n-1}[d_1,\cdots, d_{n-1}].$
The triangulated category $\prf K_n[d_1,\dots, d_n]$ has a full exceptional collection consisting of two objects, and any proper category with  an exceptional collection of two
objects is equivalent to the category $\prf K_n[d_1,\dots, d_n]$ for some $n$ and $d_i$ (see \cite[Prop. 3.8]{Or16} for a general statement).

Another generalization is to consider a DG twisted tensor product with nontrivial $\nabla.$ The simplest example is the DG algebra
$K_1\otimes^{\nabla, \mathbf{v}}_S K_{1}[-1]$ with $\nabla(c_2)=c_1.$ The resulting DG algebra is quasi-isomorphic to $K_0=S.$

\subsection{Green algebras}

The second block of examples of smooth algebras that we are going to discuss appeared in E.L.~Green's paper \cite{G} and was discussed in detail by D.~Happel in \cite{Ha} (see also \cite{HZ}).
Let us take the quiver algebras of the quivers $Q_{n,n}$ or $Q_{n,n-1}$ with the following relations:
\smallskip

\begin{tabular}{lll}
(1) & $c_jb_i=0,$ & when $j\le i;$\\
(2) & $b_ic_j=0,$ & when $i<j.$
\end{tabular}
\smallskip

Denote these algebras by $G_{2n}$ and $G_{2n-1},$ respectively. It is proved in \cite{Ha} that the algebra $G_{k}$ has finite global dimension, which is equal to $k.$
For small $k,$ we have $G_0=K_0=S$ and $G_1=K_1.$ Moreover, it is not so difficult to check that the category $\prf G_2$ is equivalent to $\prf K_2[0,1],$ and, hence, it has a full exceptional collection.
On the other hand, it was proved in \cite{Ha, MH} that  the algebras $G_{k}$ are not derived equivalent to quasi-hereditary algebras for $k\ge 3.$
Therefore, the categories $\prf G_k$ do not have full exceptional collections when $k\ge 3$ (see \cite{LY}).

Let us now show that each algebra $G_k$ can be realized as an iterated twisted tensor product of  algebras of type $K_1.$
Denote by $K_1^{\op}$ the Kronecker algebra $\kk Q_{0,1}.$ This algebra is opposite to the algebra $K_1=\kk Q_{1,0},$ and it is also isomorphic to $K_1.$

\begin{proposition} For any $n\ge 1$ there are isomorphisms of algebras
\[
G_{2n+1}\cong K_1 \otimes^{\mathbf{v}}_S G_{2n} \quad\text{and}\quad G_{2n}\cong K_1^{\op}\otimes^{\mathbf{v}}_S G_{2n-1}.
\]
\end{proposition}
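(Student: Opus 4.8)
The plan is to realize the right-hand side of each isomorphism as a quotient of the very path algebra that defines the left-hand side, and then to match defining relations. For this I would first record the precise shape of a twisted tensor product built from the twisting map $\mathbf v$ of Construction \ref{Mtwist}. If $A$ and $B$ are $S$-rings with augmentations $\pi_A,\pi_B$ onto $S$, then in $C=A\otimes^{\mathbf v}_S B$ the structure maps $i_A,i_B$ are algebra homomorphisms that restrict to the identity on $S$, every element of $C$ has the form $i_A(a)\,i_B(b)=a\otimes b$, and — by the displayed identity $(1\otimes b)(a\otimes 1)=\mathbf v(b\otimes a)=0$ at the end of Construction \ref{Mtwist} — one has $i_B(\beta)\,i_A(\alpha)=0$ for all $\alpha\in I_A$, $\beta\in I_B$. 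From this I would deduce that the canonical surjection from the free product $A\ast_S B$ in the category of $S$-rings onto $C$ identifies $C$ with the quotient of $A\ast_S B$ by the two-sided ideal generated by the products $i_B(\beta)\,i_A(\alpha)$; the surjection is an isomorphism by a comparison of spanning sets, since in the quotient the only surviving monomials are those in $S$, $I_A$, $I_B$ and $I_A\otimes_S I_B$, which together span $A\otimes_S B$.

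Next I would specialize to quiver algebras over $S=\kk\times\kk$ with vertex set $\{\bone,\btwo\}$. For such algebras the free product over $S$ is transparent: $\kk Q\ast_S\kk Q'=\kk Q''$, where $Q''$ has the same vertices and the disjoint union of the arrow sets, and relations add, so $(\kk Q/I)\ast_S(\kk Q'/I')=\kk Q''/(I+I')$. Taking $A=K_1=\kk Q_{1,0}$ and $B=G_{2n}$ (arrows $c_1,\dots,c_n,b_1,\dots,b_n$), and relabelling the new arrow of $K_1$ as $c_{n+1}$, this gives $K_1\ast_SG_{2n}\cong\kk Q_{n+1,n}/I_{2n}$, where $I_{2n}$ is the relation ideal of $G_{2n}$. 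The extra relations $i_B(\beta)\,i_A(\alpha)=0$ reduce, after checking which length-two paths through the two vertices are composable, to precisely $b_ic_{n+1}=0$ for $i=1,\dots,n$ (the products of $c_{n+1}$ with the other $c_j$ or with itself vanish already in the path algebra). Hence
\[
K_1\otimes^{\mathbf v}_SG_{2n}\ \cong\ \kk Q_{n+1,n}\big/\big(I_{2n}+\langle b_1c_{n+1},\dots,b_nc_{n+1}\rangle\big).
\]
On the other hand $G_{2n+1}=\kk Q_{n+1,n}/I_{2n+1}$ by definition, so it remains to run through the two families (1), (2) of relations of $Q_{n+1,n}$: the relations not involving $c_{n+1}$ are exactly those of $G_{2n}$; the relation $c_{n+1}b_i=0$ never occurs, since it would require $n+1\le i\le n$; and $b_ic_{n+1}=0$ holds for every $i=1,\dots,n$. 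Thus $I_{2n+1}=I_{2n}+\langle b_1c_{n+1},\dots,b_nc_{n+1}\rangle$, which yields the first isomorphism. The second one I would prove identically with $A=K_1^{\op}=\kk Q_{0,1}$ and $B=G_{2n-1}$: the new arrow is relabelled $b_n$, the extra relations become $c_jb_n=0$ for $j=1,\dots,n$, and the same inspection of (1), (2) for $Q_{n,n}$ gives $I_{2n}=I_{2n-1}+\langle c_1b_n,\dots,c_nb_n\rangle$.

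The step I expect to require the most care is the last one, the index bookkeeping: the new arrow must be appended on the correct side of the numbering — as $c_{n+1}$ and as $b_n$, not as $c_1$ or $b_1$ — so that the extra relations produced by $\mathbf v$ coincide exactly with the relations that $G_{2n+1}$ (respectively $G_{2n}$) imposes on the new arrow, with nothing gained or lost; with the opposite labelling the relations would fail to match. Everything else is the standard description of free products of quiver algebras together with the explicit form of the twisting map $\mathbf v$, so I do not anticipate any genuine obstacle beyond this.
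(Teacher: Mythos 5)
Your proof is correct and amounts to carrying out, in organized form, exactly the ``direct calculation'' the paper leaves to the reader: you use the same convention (the new $K_1$, resp.\ $K_1^{\op}$, factor corresponds to the arrow $c_{n+1}$, resp.\ $b_n$) and match the relations, with the presentation of the $\mathbf v$-twisted product as the free product over $S$ modulo $I_B\cdot I_A=0$ serving as a clean bookkeeping device; the retraction argument for that presentation is sound. No gaps.
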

\begin{proof} The proof is a direct calculation. It goes by induction, where the first factors $K_1$ and $K_1^{\op}$ are related to the arrows $c_{n+1}$ and $b_n,$ respectively.
\end{proof}

There are some generalizations of these algebras. As above, we can consider a DG version taking into account that each arrow $b_i$ and $c_j$ could have a certain degree.
Another generalization is that, instead of algebras of type $K_1$ (and $K_1^{\op}$), one can take algebras of type $K_q.$ Thus, taking iterated twisted tensor products we can obtain
algebras of the form
\begin{equation}\label{gGreen}
G_{\langle p_n, q_n,\dots, p_1, q_1\rangle}=K_{p_n}^{\op}\otimes^{\mathbf{v}}_S K_{q_n}\otimes^{\mathbf{v}}_S\cdots \otimes^{\mathbf{v}}_S K_{p_1}^{\op}\otimes^{\mathbf{v}}_S K_{q_1}
\end{equation}

In the case when $p_i=q_i=1$ for all $1\le i\le n,$  we obtain a usual algebra $G_{2n}.$ In the case, when all $p_i, q_i$ are equal to $1,$ except for the last one $p_n,$ which is equal to $0,$ we get $G_{2n-1}.$

Let us also consider another generalization of Green algebras to the case of $N$ simple modules.
We fix a natural number $N\ge 2$ and consider the semisimple algebra $S=S_N=\underbrace{\kk\times\dots\times\kk}_N.$ Denote by $K_{ij}[d]$ for $1\le i\ne j\le N$  the finite-dimensional DG algebras with the semisimple part equal to $S=S_N$ and with only one arrow  from vertex
$i$ to $j$ of degree $d$ in the Jacobson radical.
Taking iterated twisted tensor product over $S=S_N$  of such DG algebras  with the twisting map given by formula (\ref{vtwist}) from Construction \ref{Mtwist}, we obtain
new DG algebras, which will be called {\sf generalized Green DG algebras on $N$ vertices}.
By Definition \ref{TwistDGal} of the twisted tensor product of DG algebras,  any such generalized Green DG algebra has a trivial differential.
The next proposition  directly follows from Theorem \ref{DGfinpr}.

\begin{proposition}\label{GreenDG}
Any iterated twisted tensor product of DG algebras $K_{ij}[d]$ over the semisimple part $S=S_N$  via twisting (\ref{vtwist}) is a smooth DG algebra.
\end{proposition}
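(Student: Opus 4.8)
The statement should follow by induction on the number $k$ of elementary factors, with Theorem~\ref{DGfinpr} supplying the inductive step, so the plan is essentially to fit an iterated $\mathbf{v}$\!--twisted product into the situation of that theorem. First I would dispose of the base case: a single DG algebra $\dC=K_{ij}[d]$ is smooth. Indeed, its radical squares to zero and its differential is trivial, so the external radical $(\mJ_{\dC})_{+}$ coincides with the Jacobson radical and $\dC/(\mJ_{\dC})_{+}=S_N$; a direct computation (the only non‑simple indecomposable projective is the two‑dimensional one at the target vertex of the arrow) shows every simple right $\dC$\!--module has a projective resolution of length $\le 1$, so $S_N$ is perfect over $\dC$ and Proposition~\ref{newreg}, applied to the projection $\dC\to S_N$, gives that $\dC$ is regular; since $S_N=\kk\times\cdots\times\kk$ is separable over $\kk$, the same proposition yields smoothness. (Equivalently, $\prfdg K_{ij}[d]$ is a gluing of copies of $\prfdg\kk$ along perfect bimodules, hence smooth, cf.\ \cite{Or16,Or19,Or20}.)

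For the inductive step, peel off one elementary factor, writing the iterated product as $\dC\cong\dA\otimes_{S_N}^{\mathbf{v}}\dB$ with $\dB=K_{i_kj_k}[d_k]$ and $\dA$ the $\mathbf{v}$\!--twisted product over $S_N$ of the remaining $k-1$ factors, which is again a generalized Green DG algebra on $N$ vertices, hence smooth (so regular) by the induction hypothesis. Then I would check the hypotheses of Theorem~\ref{DGfinpr} with $\dR=S_N$: finite‑dimensionality is clear; $\dB$ is semi‑flat as a left $S_N$\!--module because $S_N$ is semisimple; $\dA$ carries the augmentation $\pi_A\colon\dA\to S_N$ onto its semisimple part, with augmentation ideal $\mI_{\dA}=J_{\dA}$; the twisting map $\mathbf{v}$ of Construction~\ref{Mtwist} is built from $\pi_A,\pi_B$ and sends $\dB\otimes_{S_N}\mI_{\dA}$ into $\mI_{\dA}\otimes_{S_N}\dB$ (Remark~\ref{twistedIdeal}), so $p_B\colon\dC\to\dB$ is a morphism of DG algebras and $\dC$ is a right fixed DG twisted tensor product with trivial $\nabla$ and trivial differential; and $\dA,\dB$ are regular while $\dB/(\mJ_{\dB})_{+}=\dB/J_{\dB}=S_N$ is separable over $\kk$.

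The only hypothesis that needs a genuine (if short) argument — and the main obstacle — is that $\mI_{\dA}\otimes_{S_N}\dB\subset(\mJ_{\dC})_{+}$. Since the differential of $\dC$ is trivial, $(\mJ_{\dC})_{+}$ equals the Jacobson radical of $\dC$, so it suffices to show that the two‑sided ideal $\Ker p_B=\mI_{\dA}\otimes_{S_N}\dB=i_A(J_{\dA})\,i_B(\dB)$ is nilpotent. Here the explicit form of $\mathbf{v}$ is used: formula (\ref{vtwist}) gives $i_B(b)\,i_A(a)=i_A\bigl(\epsilon_A(\pi_B(b))\,a\bigr)\in i_A(J_{\dA})$ for all $b\in\dB$ and $a\in J_{\dA}$, hence $i_B(\dB)\,i_A(J_{\dA})\subseteq i_A(J_{\dA})$, and therefore $(\Ker p_B)^{m}\subseteq i_A(J_{\dA}^{\,m})\,i_B(\dB)$, which vanishes once $m$ reaches the nilpotency index of $J_{\dA}$. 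Thus $\Ker p_B\subseteq(\mJ_{\dC})_{+}$, Theorem~\ref{DGfinpr} applies and $\dC$ is regular, and since $\dB/(\mJ_{\dB})_{+}=S_N$ is separable, $\dC$ is smooth, completing the induction. All the real content is thus confined to the base case for $K_{ij}[d]$ and the nilpotency of $\Ker p_B$; the rest is bookkeeping to match the hypotheses of Theorem~\ref{DGfinpr}.
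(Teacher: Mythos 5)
Your proof is correct and takes essentially the same route as the paper, which simply invokes Theorem~\ref{DGfinpr} here (and spells out the key point — that the twisting map $\mathbf{v}$ sends $\dB\otimes_S\mI_{\dA}$ into $\mI_{\dA}\otimes_S\dB$, so $\Ker p_B$ is nilpotent and hence lies in the external radical — in the proof of Proposition~\ref{chiprod}). Your inductive bookkeeping, including the base case via Proposition~\ref{newreg} and the tacit identification $\mI_{\dA}=J_{\dA}$ (equivalently, nilpotency of the augmentation ideal, which the same induction carries), fills in exactly the details the paper leaves implicit.
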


In the Section \ref{GroGr} we discuss  bilinear forms on the Grothendieck group of these DG algebras.

\subsection{New examples of smooth algebras}\label{Newalg}
Now we introduce and describe new families of smooth algebras with two simple modules. For simplicity, we assume that the base field $\kk$ is infinite. We start with a quiver of the form $Q_{m,n},$ as in (\ref{TwoVQ}), and fix some new relations.
These relations will also depend on some integer $0<k< n.$ It is important to note that these families of algebras have nontrivial moduli spaces.

 Let $V_i\subset C, i=1,\dots,m,$ be some subspaces of dimension $k$ and $W_i\subset C, i=1,\dots,m,$ be some subspaces of codimension $k.$
 Denote by $\mathfrak{F}=\{V_1,\ldots, V_m; W_1,\ldots, W_m\}$ the resulting family of subspaces.
We consider the following relations $I_{\mathfrak{F}}$ depending on the family $\mathfrak{F}:$
\smallskip

\begin{tabular}{lll}

(1) & $bcb'=0$ & for any $c\in C, b, b'\in B,$\\

(2) & $w b_i=0$ & for any $w\in W_i,$ where $i=1,\dots, m,$\\

(3) & $b_i v=0$ & for any $v\in V_i,$ where $i=1,\dots, m.$\\

\end{tabular}

\medskip
Denote by $R_{\mathfrak{F}}$ the quotient algebra $R_{\mathfrak{F}}=\kk Q_{n,m}/I_{\mathfrak{F}}.$
Let $S_1$ and $S_2$ be the right simple $R_{\mathfrak{F}}$\!--modules, and let $P_1=e_1R_{\mathfrak{F}}$ and $P_2=e_2 R$ be the  right projective $R_{\mathfrak{F}}$\!--modules.
There are natural short
exact sequences of right modules:
\begin{equation}\label{simplshort}
0\to BR_{\mathfrak{F}}\to P_1\to S_1\to 0\quad \text{and}\quad 0\to CR_{\mathfrak{F}}\to P_2\to S_2\to 0,
\end{equation}
where $CR_{\mathfrak{F}}=\sum_{c\in C} cR_{\mathfrak{F}}$ and $BR_{\mathfrak{F}}=\sum_{b\in B} bR_{\mathfrak{F}}$ are the right ideals generated by $C$ and $B.$

Let us choose  subspaces $V\subset C$ and $W\subset C$ of dimensions $k$ and $n-k,$ respectively,  such that $V\cap W_i=0$ and $W\cap V_i=0$ for all $i=1,\dots, m.$
We also assume that $V\cap W=0$ and, hence, $V\oplus W=C.$
The relations $I_{\mathfrak{F}}$ imply the following decompositions.
\begin{lemma} \label{decompR}
There are the following decompositions of vector spaces:
\[
e_1 R_{\mathfrak{F}} e_1=\langle e_1\rangle \oplus \bigoplus_{j=1}^m b_j W ,\quad
e_1 R_{\mathfrak{F}} e_2= B,\quad
e_2 R_{\mathfrak{F}} e_1=C\oplus  \bigoplus_{j=1}^m V b_j W,\quad
e_2 R_{\mathfrak{F}} e_2=\langle e_2\rangle \oplus \bigoplus_{j=1}^m V b_j.
\]
\end{lemma}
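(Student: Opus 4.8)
The claim is purely a linear-algebra computation: decompose each space $e_i R_{\mathfrak F} e_j$ using the quiver presentation and the relations $I_{\mathfrak F}$. The plan is to work with the path basis of $\kk Q_{n,m}$ and track which paths survive in the quotient $R_{\mathfrak F}=\kk Q_{n,m}/I_{\mathfrak F}$. A path between the two vertices is an alternating word in the arrows $b_j\in B$ and $c_i\in C$; relation (1), $bcb'=0$, kills every word containing a subword $b\,c\,b'$, i.e. every word of "syllable length" $\ge 3$ in the $B$-letters. So the only words that can be nonzero are: the idempotents $e_1,e_2$; a single arrow $c$ or $b$; a length-two word $bc$ or $cb$ (here $cb'$ means first $b'$ then $c$, reading right to left, etc.); and a length-three word $c\,b\,c'$. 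Anything longer necessarily contains two $b$'s with something between them, hence a $bcb'$ subword, hence vanishes. This already cuts the problem down to a finite, explicit list for each $e_i R_{\mathfrak F} e_j$.

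\textbf{Key steps.} First I would fix notation: paths compose right-to-left, $b_j v$ for $v\in V_i$ means the path "first $v$, then $b_j$" (a path $\bone\to\btwo\to\bone$, hence an element of $e_1 R e_1$... — one must be careful here, let me instead follow the paper's own convention, in which $b_i v=0$ is a relation with $v\in C$, so $v$ goes $\bone\to\btwo$ and $b_i$ goes $\btwo\to\bone$, making $b_i v\in e_1 R_{\mathfrak F} e_1$; similarly $w b_i\in e_2 R_{\mathfrak F} e_2$ with $w\in C$). Then:
(a) $e_1 R_{\mathfrak F}e_2 = B$: paths $\btwo\to\bone$ of length $1$ are the $b_j$; a length-$3$ such path would be $b\,c\,b'$ which is $0$ by (1); no longer odd-length paths survive. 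So $e_1Re_2=B$, spanned by $b_1,\dots,b_m$.
(b) $e_2 R_{\mathfrak F}e_1$: paths $\bone\to\btwo$; length $1$ gives $C$; length $3$ gives $c\,b_j\,c'$ with $c,c'\in C$ — but relation (3) says $b_j v=0$ for $v\in V_j$, so $b_j c'$ depends only on the class of $c'$ in $C/V_j$, and relation (2) says $w b_j=0$ for $w\in W_j$, so $c\,b_j$ depends only on the class of $c$ in $C/W_j$. Choosing the complements $V,W$ with $V\cap W_i=0$, $W\cap V_i=0$, $V\oplus W=C$, one sees the surviving length-$3$ paths through $b_j$ form exactly $V b_j W$ (the image of $V\otimes W$), and these are independent across $j$ and independent of the degree-$1$ part $C$. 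Hence $e_2 Re_1 = C\oplus\bigoplus_j V b_j W$.
(c) $e_1 R_{\mathfrak F}e_1$: even-length paths $\bone\to\bone$; the empty path gives $\langle e_1\rangle$; length $2$ gives $b_j c$ with $c\in C$ (reading: first $c$, then $b_j$), and by relation (3) $b_j c=0$ if $c\in V_j$, so $b_j c$ depends only on $c\bmod V_j$, i.e. on $c\in W$; length $4$ would be $b\,c\,b'\,c'=0$ by (1). So $e_1 Re_1=\langle e_1\rangle\oplus\bigoplus_j b_j W$.
(d) $e_2 R_{\mathfrak F}e_2$: symmetric to (c) using relation (2): $\langle e_2\rangle\oplus\bigoplus_j V b_j$.
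In each case the independence of the listed summands (and of distinct $j$'s) has to be justified — this is the content of choosing $V$ and $W$ generically, which is possible precisely because $\kk$ is infinite and the $V_i,W_i$ are proper subspaces (finitely many of them), so a complement avoiding all of them exists.

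\textbf{Main obstacle.} The only genuinely nontrivial point is verifying that the proposed spanning sets are in fact \emph{bases}, i.e. that no further relations among them are forced by $I_{\mathfrak F}$. Concretely one must show that the map $V\otimes\kk^m\otimes W\to e_2 R_{\mathfrak F}e_1$, $v\otimes e_j\otimes w\mapsto v b_j w$, is injective (and similarly for the $b_j W$ and $V b_j$ pieces), and that its image meets $C\subset e_2 Re_1$ trivially. I would argue this by constructing the quotient $R_{\mathfrak F}$ from the path algebra explicitly: take the span of the finite list of paths above, define multiplication by the evident truncation/reduction rules (a product is reduced using (1),(2),(3) or set to $0$), check associativity, and check that this algebra is a quotient of $\kk Q_{n,m}$ by exactly the ideal $I_{\mathfrak F}$ (the relations clearly hold, and any monomial relation not among them would be visible as a failure of one of the reduction rules). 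Since the dimension of this candidate algebra matches the dimension forced by the relations, it equals $R_{\mathfrak F}$, and the displayed decompositions follow. The bookkeeping is routine; the care is in the genericity choice of $V,W$ and in not accidentally identifying the degree-$1$ copy of $C$ with something in the degree-$3$ part.
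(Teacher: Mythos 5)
Your argument is correct and is exactly the computation the paper has in mind: the paper states Lemma \ref{decompR} without proof, as an immediate consequence of the relations $I_{\mathfrak{F}}$, and your path-basis enumeration (at most one $B$-letter survives by relation (1), then reduction modulo $V_j$ and $W_j$ via relations (2)--(3) and the chosen complements $V,W$) is the intended justification. The only streamlining worth noting is that the linear independence you worry about follows at once from the fact that the generators of $I_{\mathfrak{F}}$ are homogeneous in path length and in the arrow pattern, so the quotient inherits the grading and each graded piece is computed directly, with no need for an explicit reduction-system or dimension-count argument.
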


Lemma \ref{decompR} implies that the right $R_{\mathfrak{F}}$\!--module $BR_{\mathfrak{F}}$ is isomorphic to the direct sum $ \mathop\oplus\limits_{i=1}^{m} b_i R_{\mathfrak{F}}.$
Moreover, the following statement holds.
\begin{lemma} \label{BR}
For any subset $\{j_1,\cdots, j_l\}\subseteq \{ 1, \cdots, n \}$ the right ideal $b_{j_1}R_{\mathfrak{F}}+\cdots + b_{j_l}R_{\mathfrak{F}}\subset R_{\mathfrak{F}}$
is isomorphic to the direct sum $b_{j_1} R_{\mathfrak{F}}\oplus\cdots\oplus b_{j_l} R_{\mathfrak{F}}$ as a right $R_{\mathfrak{F}}$\!--module.
\end{lemma}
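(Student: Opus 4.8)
The plan is to show that the natural surjection $b_{j_1}R_{\mathfrak{F}}\oplus\cdots\oplus b_{j_l}R_{\mathfrak{F}}\twoheadrightarrow b_{j_1}R_{\mathfrak{F}}+\cdots+b_{j_l}R_{\mathfrak{F}}$ is injective, i.e. that there are no nontrivial relations among elements of the distinct right ideals $b_jR_{\mathfrak{F}}$. First I would use Lemma \ref{decompR} to get an explicit basis for each $b_jR_{\mathfrak{F}}$. Since $b_j\in e_1R_{\mathfrak{F}}e_2$, the ideal $b_jR_{\mathfrak{F}}$ sits in $e_1R_{\mathfrak{F}}$, and multiplying $b_j$ on the right by the bases of $e_2R_{\mathfrak{F}}e_1$ and $e_2R_{\mathfrak{F}}e_2$ from Lemma \ref{decompR} gives $b_jR_{\mathfrak{F}}=b_j\langle e_2\rangle\oplus\bigoplus_{p} b_j V b_p$ inside $e_1R_{\mathfrak{F}}$, using relation (1) $bcb'=0$ to kill the longer words. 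So $b_jR_{\mathfrak{F}}$ is spanned by $b_j$ together with the products $b_j v b_p$ for $v\in V$ (more precisely $v$ running over a basis of $V$, after accounting for the relation $b_p v'=0$ for $v'\in V_p$).

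The key point is then a direct-sum statement: the span of $\{b_j : j\}\cup\{b_j v b_p : j,p, v\in V\}$ inside $R_{\mathfrak{F}}$ decomposes as $\bigoplus_j b_jR_{\mathfrak{F}}$. I would extract this from Lemma \ref{decompR} itself: the lemma asserts $e_1R_{\mathfrak{F}}e_2=B=\bigoplus_j \langle b_j\rangle$ and $e_1R_{\mathfrak{F}}e_1=\langle e_1\rangle\oplus\bigoplus_j b_jW$. So on the ``degree-one in $B$'' part the $b_j$ are manifestly linearly independent, and on the ``degree-two in $B$'' part (which lands in $e_1R_{\mathfrak{F}}e_1$), the words $b_j v b_p$ have $b_j$ as their leftmost letter, so a relation $\sum_j b_j x_j = 0$ with $x_j\in e_2R_{\mathfrak{F}}$ forces, after projecting to the appropriate graded pieces given by Lemma \ref{decompR}, each $b_j x_j=0$; and $b_j x_j = 0$ inside $R_{\mathfrak{F}}$ with $x_j\in e_2 R_{\mathfrak{F}}$ combined with the explicit basis of $b_jR_{\mathfrak{F}}$ forces $x_j\in\operatorname{Ann}$, i.e. the map $e_2R_{\mathfrak{F}}\to b_jR_{\mathfrak{F}}$, $x\mapsto b_j x$, has a kernel independent of $j$ (it is governed only by the relations $b_j v=0$ for $v\in V_j$ and $bcb'=0$, which after using $V\cap W_i=0$, $W\cap V_i=0$ are uniform). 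This gives the isomorphism.

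The main obstacle I expect is bookkeeping the relations (2) and (3): the ideal $b_jR_{\mathfrak{F}}$ is \emph{not} a free module — $b_j$ is annihilated on the right by $V_j\subset C$, and the products $b_jvb_p$ are further constrained by $b_p$ being annihilated by $V_p$ and by relation (1). So I must be careful that the ``basis'' of $b_jR_{\mathfrak{F}}$ I write down is genuinely a basis (no hidden relations), and that the annihilator structure is the same for all $j$ so that the sum is direct and not merely a sum of isomorphic-looking pieces. The cleanest route is: (i) fix once and for all complements so that $C = V_j \oplus (\text{something})$ is controlled uniformly using the chosen $V, W$ with $V\cap W_i = 0$, $W\cap V_i=0$; (ii) read off from Lemma \ref{decompR} that $e_1R_{\mathfrak{F}}$, as a right $R_{\mathfrak{F}}$-module, already has its $B$-filtration split, with $\operatorname{gr}$ pieces $\langle e_1\rangle$, $B$, and $\bigoplus_j b_jW$; (iii) observe $b_jR_{\mathfrak{F}}$ meets each graded piece in $\langle b_j\rangle$ and $b_jW\cong b_j V (\text{stuff})$-span, which are literally the $j$-th summands of those direct sums; hence $\sum_j b_jR_{\mathfrak{F}}=\bigoplus_j b_jR_{\mathfrak{F}}$ and a fortiori the same holds for any subset $\{j_1,\dots,j_l\}$. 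Once the graded/filtered picture from Lemma \ref{decompR} is in hand, the statement is essentially a reading-off, so I do not anticipate a genuinely hard step, only a careful one.
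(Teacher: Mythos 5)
Your overall route---reading the direct-sum statement off the explicit vector-space decomposition of Lemma \ref{decompR}---is exactly how the paper obtains Lemma \ref{BR} (the paper offers no separate argument; it treats the lemma as an immediate consequence of Lemma \ref{decompR}). However, your explicit description of $b_jR_{\mathfrak{F}}$ in the first paragraph is wrong, and not just notationally. The words $b_jvb_p$ with $v\in V\subset C$ are literally of the form $bcb'$, so relation (1) annihilates \emph{all} of them; it does not merely ``kill the longer words.'' What survives in length two is precisely the part you dropped, namely $b_j\cdot C$, which by relation (3) and the choice $W\cap V_j=0$ equals $b_jW$ and sits in $e_1R_{\mathfrak{F}}e_1$ (whereas the words $b_jvb_p$ would lie in $e_1R_{\mathfrak{F}}e_2$, contrary to your ``lands in $e_1R_{\mathfrak{F}}e_1$''). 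The correct description is $b_jR_{\mathfrak{F}}=\langle b_j\rangle\oplus b_jW$; with your claimed spanning set the module would collapse to $\langle b_j\rangle$, which contradicts your own step (iii). Likewise, your parenthetical claim that the kernel of $e_2R_{\mathfrak{F}}\to b_jR_{\mathfrak{F}}$, $x\mapsto b_jx$, is independent of $j$ is false---it contains $V_j$, which varies with $j$ (compare Lemma \ref{L1})---although nothing in the lemma actually requires such uniformity.

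The good news is that your step (iii) is the correct argument, and the paper's, once the description of $b_jR_{\mathfrak{F}}$ is repaired: $\langle b_j\rangle$ and $b_jW$ are exactly the $j$-th summands of $e_1R_{\mathfrak{F}}e_2=B=\bigoplus_j\langle b_j\rangle$ and of the summand $\bigoplus_j b_jW$ of $e_1R_{\mathfrak{F}}e_1$ in Lemma \ref{decompR}. Hence for any subset of indices the sum $b_{j_1}R_{\mathfrak{F}}+\cdots+b_{j_l}R_{\mathfrak{F}}$ is direct as a sum of subspaces, so the canonical surjection from the external direct sum is an isomorphism of right modules. In short: same approach as the paper, but the intermediate computation of $b_jR_{\mathfrak{F}}$ (and the role of relations (1) and (3) in it) must be corrected for the proof to stand.
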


For any element $r\in R_{\mathfrak{F}}$ denote by $\rAnn(r)\subset R_{\mathfrak{F}}$ the right annihilator of $r.$
The following lemma directly follows from Lemma \ref{decompR}.

\begin{lemma}\label{xb} For any $x\in C\backslash W_i,$ there are isomorphisms of right modules $xb_i R_{\mathfrak{F}}\cong b_i R_{\mathfrak{F}}$ and
$\rAnn (xb_i)\cong\rAnn (b_i).$
\end{lemma}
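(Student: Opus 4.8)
The plan is to deduce Lemma \ref{xb} directly from the explicit description of the subspaces $e_1R_{\mathfrak{F}}e_1$, $e_1R_{\mathfrak{F}}e_2$, $e_2R_{\mathfrak{F}}e_1$, $e_2R_{\mathfrak{F}}e_2$ given in Lemma \ref{decompR}, together with relation (1) $bcb'=0$ and relation (2) $wb_i=0$ for $w\in W_i$. The first step is to understand $b_iR_{\mathfrak{F}}$ as a vector space: since $b_i\in e_1R_{\mathfrak{F}}e_2$, the module $b_iR_{\mathfrak{F}}$ decomposes as $b_i(e_2R_{\mathfrak{F}}e_1)\oplus b_i(e_2R_{\mathfrak{F}}e_2)$. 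Using Lemma \ref{decompR} and relation (1), one sees $b_i\cdot Vb_jW=0$ and $b_i\cdot Vb_j=0$ for all $j$ (any such product contains a subword $bcb'$), so $b_iR_{\mathfrak{F}}e_1=b_iC$ and $b_iR_{\mathfrak{F}}e_2=\langle b_i\rangle$. Hence $b_iR_{\mathfrak{F}}=\langle b_i\rangle\oplus b_iC$, and by relation (3) $b_iV_i=0$, so in fact $b_iR_{\mathfrak{F}}=\langle b_i\rangle\oplus b_iC$ with $b_iC\cong C/V_i$ a fixed $(n-k)$-dimensional space.

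Next I would analyze the left-multiplication map $x\cdot(-): b_iR_{\mathfrak{F}}\to R_{\mathfrak{F}}$ for $x\in C$. This is manifestly a right $R_{\mathfrak{F}}$-module homomorphism, and its image is $xb_iR_{\mathfrak{F}}$. Now $x\in C=e_2R_{\mathfrak{F}}e_1$ kills the part $b_iC\subset e_1R_{\mathfrak{F}}e_1$ by relation (1) (the product $x b_i c$ has the subword $b_i c \cdot$ wait, more precisely $xb_ic$ is of the form $c'b_i c$ which contains $b_i c$... ) — rather, $x b_i c = 0$ because $xb_ic$ equals an element of $C\cdot B\cdot C$; but relation (1) is $bcb'=0$, i.e. products $B\cdot C\cdot B$, so I must instead observe $xb_ic\in CBC\subset e_2R_{\mathfrak{F}}e_2$, and by Lemma \ref{decompR} that space is $\langle e_2\rangle\oplus\bigoplus Vb_j$, which contains no length-$3$ paths of the shape $c b c$ unless such a path lies in $\bigoplus Vb_j$; but $cbc\notin Vb_j$ since the latter are length-$2$. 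Hence $xb_ic=0$. Therefore $x\cdot(b_iR_{\mathfrak{F}})=x\cdot(\langle b_i\rangle)=\langle xb_i\rangle$ spans, and $xb_iR_{\mathfrak{F}}$ coincides with the cyclic module generated by $xb_i\in e_2R_{\mathfrak{F}}e_1$; computing $xb_i R_{\mathfrak{F}}$ by the same decomposition argument as for $b_iR_{\mathfrak{F}}$ gives $xb_iR_{\mathfrak{F}}=\langle xb_i\rangle\oplus xb_iC$, and since $xb_ic=0$ for all $c$, actually $xb_iR_{\mathfrak{F}}=\langle xb_i\rangle$, which is one-dimensional, hence $xb_iR_{\mathfrak{F}}\cong b_iR_{\mathfrak{F}}/(b_iC)$. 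Wait — this would make the isomorphism claim false unless I've mishandled a product; so the correct bookkeeping must instead show $x b_i c\ne 0$ in general, i.e.\ the relevant products $cb_ic$ do survive (they are the elements $Vb_jW$ appearing in $e_2R_{\mathfrak F}e_1$). I will recompute carefully: for $x\in C\setminus W_i$, the product $xb_i$ is nonzero (relation (2) only kills $W_ib_i$), and $xb_i\cdot W\subset Vb_jW$-type terms are nonzero, so $xb_iR_{\mathfrak{F}}$ genuinely contains a copy of $b_iC\cong C/V_i$.

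With the computations corrected, the heart of the argument is to exhibit the isomorphism $b_iR_{\mathfrak{F}}\xrightarrow{\;x\cdot\;}xb_iR_{\mathfrak{F}}$: surjectivity is automatic, and injectivity follows by comparing graded dimensions (both sides have dimension $1+(n-k)$ by the decomposition in Lemma \ref{decompR}), or more conceptually by checking that $x\cdot(-)$ is injective on each graded piece — on $\langle b_i\rangle$ because $xb_i\ne0$ when $x\notin W_i$, and on $b_iC$ because the composite with the quotient to $e_2R_{\mathfrak{F}}e_1$ sends $b_ic\mapsto xb_ic$ and the relations impose no further vanishing of $Vb_iW$-terms beyond $b_iV_i=0$. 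For the statement about annihilators, I would note that $\rAnn(b_i)\subseteq\rAnn(xb_i)$ is trivial from $xb_i\in C\cdot b_i$, and the reverse inclusion follows because $\rAnn(b_i)=\{r: b_ir=0\}$ is exactly the kernel of $b_iR_{\mathfrak F}^{\mathrm{can}}\to R_{\mathfrak F}$... more precisely, $r\in\rAnn(b_i)$ iff $r\cdot$ kills the generator, and since the module map $x\cdot$ is an isomorphism intertwining the two cyclic module structures, the annihilator of the generator $b_i$ equals the annihilator of $x b_i$; so $\rAnn(xb_i)=\rAnn(b_i)$ as right ideals, and in particular they are isomorphic as right modules. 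The main obstacle I anticipate is the careful bookkeeping of which triple products vanish under relations (1)–(3) — in particular keeping straight the roles of $W_i$ (from relation (2), governing nonvanishing of $xb_i$) versus $V_i$ (from relation (3), governing $b_iV_i=0$) — and making sure the graded-dimension count for $xb_iR_{\mathfrak{F}}$ matches that for $b_iR_{\mathfrak{F}}$ precisely when $x\notin W_i$.
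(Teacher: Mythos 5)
Your final, corrected argument is right and is essentially what the paper intends: the paper gives no separate proof, saying only that the lemma follows directly from Lemma \ref{decompR}, and your left-multiplication map $\lambda_x\colon b_iR_{\mathfrak{F}}\to xb_iR_{\mathfrak{F}}$ together with the count $\dim b_iR_{\mathfrak{F}}=\dim xb_iR_{\mathfrak{F}}=1+(n-k)$ (valid precisely because $x\notin W_i$, so writing $x=v+w$ with $v\in V$ nonzero and $w\in W_i$ gives $xb_i=vb_i\neq0$ and $xb_iC=vb_iW\subset Vb_iW$ of dimension $n-k$) makes this precise, and matching generators then yields $\rAnn(xb_i)=\rAnn(b_i)$. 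In a final write-up, delete the retracted mis-steps in the middle (e.g.\ ``$xb_ic\in CBC\subset e_2R_{\mathfrak{F}}e_2$'' and the provisional ``$xb_ic=0$''), since $CBC$ lies in $e_2R_{\mathfrak{F}}e_1$ and these products survive exactly as the $Vb_iW$-terms, as your own correction states.
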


Let $U_{ij}\subseteq C$ denote the sum of the subspaces $V_i$ and $W_j.$ We put
\[
t_{ij}=\codim U_{ij}=\dim (V_i\cap W_j).
\]
Denote by $t$ the maximum of all $t_{ij}.$
Let us choose a sequence of elements $x_1,\cdots, x_t\in C$ such that $x_1$ does not belong to any proper subspace $U_{ij}$ and  $x_{q+1}$ does not belong to any subspace $U_{ij}+\langle x_1,\cdots, x_q\rangle,$ when $t_{ij}> q.$


For any element $c\in C$ we denote by $T_c\subseteq \{1,\ldots, m\}$ the subset consisting of all $l$ such that $c\in W_l.$
Let us compute the right annihilators of certain elements.

\begin{lemma}\label{L1} There are the following isomorphisms of the right modules:
\[
\begin{array}{ll}
1) & \rAnn (c) \cong e_2 R_{\mathfrak{F}} \oplus  \mathop{\bigoplus}\limits_{j\in T_c} b_j R_{\mathfrak{F}}, \\
2) & \rAnn (b_i) \cong e_1 R_{\mathfrak{F}} \oplus V_i R_{\mathfrak{F}} \oplus \left(\mathop{\bigoplus}\limits_{j=1}^{m} \mathop{\bigoplus}\limits_{s=1}^{t_{ij}} x_sb_j R_{\mathfrak{F}} \right)
\cong e_1 R_{\mathfrak{F}} \oplus V_i R_{\mathfrak{F}} \oplus \mathop{\bigoplus}\limits_{j=1}^{m} (b_j R_{\mathfrak{F}} )^{\oplus t_{ij}}.
\end{array}
\]
\end{lemma}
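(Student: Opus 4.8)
The plan is to compute the right annihilators directly from the multiplication rules encoded in Lemma \ref{decompR}, working inside the fixed decomposition $C = V \oplus W$ and using the relations $I_{\mathfrak F}$. First I would treat part 1). Since $c \in C = e_2 R_{\mathfrak F} e_1$, the product $c\cdot r$ only depends on the components of $r$ in $e_1 R_{\mathfrak F} e_1$ and $e_1 R_{\mathfrak F} e_2$, so I may restrict attention to $r \in e_1 R_{\mathfrak F}$. By relation (1), $c\cdot(b_j W) = 0$ for every $j$, so the entire summand $\bigoplus_j b_j W$ of $e_1 R_{\mathfrak F} e_1$ is annihilated; this together with $c\cdot e_1 = c$ and the relation $c b_j = 0$ versus $c b_j \neq 0$ pins down the $e_1 R_{\mathfrak F} e_1$-part of the annihilator. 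For the $e_1 R_{\mathfrak F} e_2 = B$-part: $c b_j = 0$ in $R_{\mathfrak F}$ precisely when $c \in W_j$, i.e. $j \in T_c$ (using relation (2) and the genericity of the $W_j$), so $\rAnn(c)$ picks up exactly $\bigoplus_{j \in T_c} b_j R_{\mathfrak F}$. Packaging these observations, and invoking Lemma \ref{BR} to see that the sum $e_2 R_{\mathfrak F} + \sum_{j\in T_c} b_j R_{\mathfrak F}$ inside $R_{\mathfrak F}$ is actually a direct sum, gives the stated isomorphism.

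For part 2), the element $b_i \in e_1 R_{\mathfrak F} e_2 = B$ sends $e_2 R_{\mathfrak F}$ into $e_1 R_{\mathfrak F}$, so I work with $r \in e_2 R_{\mathfrak F}$, splitting it along $e_2 R_{\mathfrak F} e_1 = C \oplus \bigoplus_j V b_j W$ and $e_2 R_{\mathfrak F} e_2 = \langle e_2\rangle \oplus \bigoplus_j V b_j$. Relation (1) kills $b_i\cdot (V b_j W)$ and $b_i\cdot(V b_j)$ outright (these are of the form $b_i c b'$), and $b_i e_2 = b_i \neq 0$, so the annihilator meets $e_2 R_{\mathfrak F} e_2$ in exactly $\bigoplus_j V b_j$. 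The interesting part is $b_i\cdot C$: by relation (3), $b_i v = 0$ for $v \in V_i$, so $V_i R_{\mathfrak F} \subseteq \rAnn(b_i)$ contributes, and for a general $c \in C$ the product $b_i c$ is a nonzero element of $e_1 R_{\mathfrak F} e_1$ whose further right-annihilator is governed by Lemma \ref{xb} (which says $\rAnn(x b_j) \cong \rAnn(b_j)$ for $x \notin W_j$). Here is where the elements $x_1, \dots, x_t$ enter: the products $x_s b_j$ for $1 \le s \le t_{ij}$ are chosen so that, modulo $V_i$, they generate exactly the part of $C R_{\mathfrak F}$ (pushed through $b_i$) whose image is annihilated — the codimension count $t_{ij} = \dim(V_i \cap W_j) = \codim U_{ij}$ is precisely the number of independent such elements needed. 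Assembling $e_1 R_{\mathfrak F}$ (annihilated since $b_i \in e_1 R_{\mathfrak F} e_2$, so $b_i\cdot e_1 R_{\mathfrak F} = 0$), $V_i R_{\mathfrak F}$, and $\bigoplus_{j,s} x_s b_j R_{\mathfrak F}$, and checking the sum is direct via Lemmas \ref{BR} and \ref{xb}, yields the first isomorphism; the second follows by applying Lemma \ref{xb} termwise to replace each $x_s b_j R_{\mathfrak F}$ by $b_j R_{\mathfrak F}$.

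The main obstacle I anticipate is the bookkeeping in part 2): one must verify that the three families $e_1 R_{\mathfrak F}$, $V_i R_{\mathfrak F}$, and the $x_s b_j R_{\mathfrak F}$ span the full right annihilator of $b_i$ and no more — in particular that no element of $C R_{\mathfrak F}$ outside the span of $V_i$ and the chosen $x_s b_j$ is annihilated, and that the listed generators are genuinely independent. This is a linear-algebra computation driven by the genericity hypotheses $V \cap W_i = 0$, $W \cap V_i = 0$, and the inductive choice of the $x_s$, combined with Lemma \ref{decompR}'s explicit basis of $e_1 R_{\mathfrak F} e_1$ and $e_2 R_{\mathfrak F} e_1$; the dimension count must come out to exactly $\dim(V_i \cap W_j) = t_{ij}$ for each $j$, which is what forces the upper limit $t_{ij}$ in the sum. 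The directness of all the sums is then a routine consequence of Lemma \ref{BR} (for the $b_j R_{\mathfrak F}$ summands) and of the decomposition in Lemma \ref{decompR} (to separate the $e_k R_{\mathfrak F}$-type summands from the rest).
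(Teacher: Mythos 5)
Your overall route is the paper's: compute $\rAnn(c)$ and $\rAnn(b_i)$ componentwise from the decomposition of Lemma \ref{decompR} and the relations $I_{\mathfrak F}$, then invoke Lemmas \ref{BR} and \ref{xb} to identify the summands. But part 1) contains a false step. You claim that relation (1) gives $c\cdot(b_jW)=0$ for every $j$, so that all of $\bigoplus_j b_jW$ lies in $\rAnn(c)$. Relation (1) only kills words of shape $b\,c\,b'$, whereas $c\cdot(b_jw)$ has shape $c\,b\,c'$; such elements span the summands $Vb_jW$ of $e_2R_{\mathfrak F}e_1$ in Lemma \ref{decompR} and are certainly not zero in general. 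The correct statement is $c\cdot(b_jw)=(cb_j)w$, which vanishes exactly when $cb_j=0$, i.e. when $j\in T_c$ (since $b_jw\neq 0$ for $0\neq w\in W$, as $W\cap V_j=0$). Taken literally, your claim would put $b_jW$ for $j\notin T_c$ into $\rAnn(c)$, making the annihilator strictly larger than the module $e_2R_{\mathfrak F}\oplus\bigoplus_{j\in T_c}b_jR_{\mathfrak F}$ you then assert -- so your intermediate step contradicts your own conclusion. With the corrected observation, $\rAnn(c)\cap e_1R_{\mathfrak F}=\bigoplus_{j\in T_c}(\kk b_j\oplus b_jW)=\bigoplus_{j\in T_c}b_jR_{\mathfrak F}$ and part 1) goes through as in the paper.

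In part 2) your computation of the vector space $\rAnn(b_i)=e_1R_{\mathfrak F}\oplus V_i\oplus\bigoplus_j Vb_j\oplus\bigoplus_j Vb_jW$ is right, but two points are muddled. First, Lemma \ref{xb} concerns $xb_j$ with $x\in C\setminus W_j$, not the ``further right-annihilator of $b_ic$''; its only roles here are to give $x_sb_jR_{\mathfrak F}\cong b_jR_{\mathfrak F}$ (note you must observe $x_s\notin W_j$ for $s\le t_{ij}$, which holds since $W_j\subseteq U_{ij}$), together with Lemma \ref{BR} for directness. Second, what you defer as ``the main obstacle'' is not an open-ended genericity computation: it is exactly what the inductive choice of $x_1,\dots,x_t$ delivers. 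For $s\le t_{ij}$ the $x_s$ are, by construction, linearly independent modulo $U_{ij}=V_i+W_j$, and since $\dim C/U_{ij}=t_{ij}$ their classes form a basis of $C/U_{ij}$; hence $x_1b_j,\dots,x_{t_{ij}}b_j$ complement $V_ib_j$ inside $Vb_j=Cb_j\cong C/W_j$, and appending $W$ (using $Vb_jW\cong Vb_j\otimes W$) does the same for the $Vb_jW$ components. This yields both the equality $\rAnn(b_i)=e_1R_{\mathfrak F}\oplus V_iR_{\mathfrak F}\oplus\bigoplus_{j,s\le t_{ij}}x_sb_jR_{\mathfrak F}$ and its directness, which is the paper's argument. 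So the proposal is salvageable and essentially follows the paper, but as written part 1) is wrong and part 2) leaves the decisive complementation argument unproved while misattributing the needed input to Lemma \ref{xb}.
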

\begin{proof} Equality 1) directly follows from relation (2) of  $I_{\mathfrak{F}}$ and Lemma \ref{BR}.

By Lemma \ref{decompR}, the vector space $CR_{\mathfrak{F}}$ can be decomposed as $C\oplus \bigoplus_{j=1}^m V b_j\oplus \bigoplus_{j=1}^m V b_j W,$  where
$V\subset C$ and $W\subset C$ are of dimension $k$ and $n-k,$ respectively,  such that $V\cap W_i=0$ and $W\cap V_i=0$ for all $i=1,\dots, m.$
Now, the annihilator
$\rAnn (b_i)$ is equal to $e_1 R_{\mathfrak{F}} \oplus V_i \oplus \bigoplus_{j=1}^m V b_j\oplus \bigoplus_{j=1}^m V b_j W.$ Since $\dim (V_i\cap W_j)=t_{ij},$ the codimension
of the subspace $V_ib_j$ in the space $V b_j$ is equal to $t_{ij}.$ By the construction above, the vectors $x_sb_j,$ where $s=1,\dots, t_{ij},$ give a basis for a complementary subspace in $V b_j.$
Taking into account Lemmas \ref{BR} and \ref{xb}, we obtain equalities 2).
\end{proof}

Using the previous lemma, we get the following exact sequences:
\begin{equation}\label{Shortres}
0\to \bigoplus_{j\in T_{c}} b_j R_{\mathfrak{F}}\to P_1\to c R_{\mathfrak{F}}\to 0, \qquad 0\to V_i R_{\mathfrak{F}} \oplus \mathop{\bigoplus}\limits_{j=1}^{m} (b_j R_{\mathfrak{F}})^{\oplus t_{ij}}\to P_2\to b_iR_{\mathfrak{F}}\to 0.
\end{equation}
The first short sequence implies the following proposition.
\begin{proposition}\label{pdcR} Let $c$ be an element of $C.$ If $T_{c}=\emptyset,$ then $c R_{\mathfrak{F}}\cong P_1$ and
 $\pd(c R_{\mathfrak{F}})=0.$ If $T_{c}\ne \emptyset,$ then  $\pd(cR_{\mathfrak{F}})=\max\{\pd(b_j R_{\mathfrak{F}})|\; j\in T_{c}\}+1.$
 \end{proposition}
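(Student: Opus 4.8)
The plan is to read off both claims from the first short exact sequence in \eqref{Shortres}, namely
\[
0\to \bigoplus_{j\in T_{c}} b_j R_{\mathfrak{F}}\to P_1\to c R_{\mathfrak{F}}\to 0 .
\]
First I would dispatch the case $T_c=\emptyset$: then the left-hand term is zero, so the map $P_1\to cR_{\mathfrak{F}}$ is an isomorphism, and since $P_1=e_1R_{\mathfrak{F}}$ is projective, $\pd(cR_{\mathfrak{F}})=0$. (One should also note $cR_{\mathfrak{F}}\ne 0$, which follows from Lemma \ref{decompR} since $c\in C\subset e_2R_{\mathfrak{F}}e_1$ is part of a basis; alternatively $c\notin W_i$ for all $i$ means relation (2) imposes nothing on $c$ itself.)

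For the case $T_c\ne\emptyset$, the idea is the standard dimension-shift/horseshoe argument applied to the above short exact sequence. Since $P_1$ is projective with $\pd P_1=0$, the long exact sequence for $\Ext^{\bullet}(-,N)$ gives $\Ext^{i+1}(cR_{\mathfrak{F}},N)\cong \Ext^{i}\bigl(\bigoplus_{j\in T_c}b_jR_{\mathfrak{F}},N\bigr)$ for all $i\ge 1$, and in degree $0$ one gets the usual six-term tail; consequently $\pd(cR_{\mathfrak{F}})=\pd\bigl(\bigoplus_{j\in T_c}b_jR_{\mathfrak{F}}\bigr)+1$ provided the submodule $\bigoplus_{j\in T_c}b_jR_{\mathfrak{F}}$ is not itself projective and not zero — and it is nonzero because $T_c\ne\emptyset$ and each $b_jR_{\mathfrak{F}}\ne 0$. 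Here I use that a direct sum is never a submodule making the quotient have strictly smaller projective dimension unless there is a splitting; more precisely, I invoke $\pd(M\oplus M')=\max(\pd M,\pd M')$ together with the general fact that for a short exact sequence $0\to K\to P\to L\to 0$ with $P$ projective, either $K$ is projective and $\pd L\le 1$, or $\pd L=\pd K+1$. Finally $\pd\bigl(\bigoplus_{j\in T_c}b_jR_{\mathfrak{F}}\bigr)=\max\{\pd(b_jR_{\mathfrak{F}})\mid j\in T_c\}$ gives the stated formula.

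The one point that needs a little care — and is the main (minor) obstacle — is ruling out that the sub $\bigoplus_{j\in T_c}b_jR_{\mathfrak{F}}$ could be projective, which would make the recursion break down and force $\pd(cR_{\mathfrak{F}})\le 1$ instead of the claimed value. I would argue that $b_jR_{\mathfrak{F}}$ is never projective: from the second sequence in \eqref{Shortres} its minimal projective cover is $P_2$, and the kernel $V_iR_{\mathfrak{F}}\oplus\bigoplus_j(b_jR_{\mathfrak{F}})^{\oplus t_{ij}}$ is nonzero because $V_i\ne 0$ (as $k>0$), so $b_jR_{\mathfrak{F}}$ has a nonzero first syzygy and hence $\pd(b_jR_{\mathfrak{F}})\ge 1$. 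In fact this also shows $\pd(cR_{\mathfrak{F}})\ge 2$ when $T_c\ne\emptyset$, consistent with the formula. Having established that each $b_jR_{\mathfrak{F}}$ has positive projective dimension, the additivity of $\pd$ over finite direct sums and the dimension-shift isomorphisms above combine to give exactly $\pd(cR_{\mathfrak{F}})=\max\{\pd(b_jR_{\mathfrak{F}})\mid j\in T_c\}+1$, completing the proof.
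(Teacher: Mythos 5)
Your proof is correct and follows the paper's intended argument: the paper derives the proposition directly from the first short exact sequence in (\ref{Shortres}) via the standard dimension-shift, exactly as you do. Your extra care in checking that each $b_jR_{\mathfrak{F}}$ is non-projective (via the second sequence and $V_i\neq 0$) is a worthwhile detail that the paper leaves implicit, but it does not change the route.
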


Furthermore, for any subspace $U\subseteq C$ of dimension $d$ denote by $t_{U,j}$ the dimension of the intersection
$U\cap W_j.$ The right module $UR_{\mathfrak{F}}$ can be covered by the projective module $P_1^{d}$  and it is a minimal projective cover.
It is easy to see that for any subspace $U\subseteq C,$ there is the following short exact sequence:
\begin{equation}\label{Ures}
0\lto \mathop{\bigoplus}\limits_{j=1}^{m} (b_j R_{\mathfrak{F}})^{\oplus t_{U,j}}\lto P_1^{d}\lto U R_{\mathfrak{F}}\lto 0.
\end{equation}
In particular, for the right modules $V_i R_{\mathfrak{F}}$ and the module $CR_{\mathfrak{F}}$ we have  short exact sequences:
\begin{equation}\label{Cres}
0\to \mathop{\bigoplus}\limits_{j=1}^{m} (b_j R_{\mathfrak{F}})^{\oplus t_{i,j}}\to P_1^{k}\to V_i R_{\mathfrak{F}}\to 0
\quad\text{and}\quad
0\to \bigoplus_{i=1}^{m} (b_i R_{\mathfrak{F}})^{\oplus (n-k)}\to P_1^{n }\to CR_{\mathfrak{F}}\to 0.
\end{equation}

The following generalization of Proposition \ref{pdcR} holds.
\begin{proposition}\label{prdbR}
Let $U\subseteq C$ be a subspace of dimension $d.$ If $t_{U,j}\ne 0$ for some $j,$ then $\pd(UR_{\mathfrak{F}})=\max\{\pd(b_j R_{\mathfrak{F}})|\; t_{U,j}\ne 0 \}+1;$
otherwise, $UR_{\mathfrak{F}}\cong P_1^d.$ For any $i,$ there is an equality $\pd(b_iR_{\mathfrak{F}})=\pd(V_iR_{\mathfrak{F}})\}+1.$
\end{proposition}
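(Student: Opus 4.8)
The statement has two parts: first, for a subspace $U\subseteq C$ of dimension $d$, compute $\pd(UR_{\mathfrak{F}})$ in terms of the $\pd(b_jR_{\mathfrak{F}})$ for those $j$ with $t_{U,j}\neq 0$; second, relate $\pd(b_iR_{\mathfrak{F}})$ to $\pd(V_iR_{\mathfrak{F}})$. The plan is to feed the short exact sequences established just above, namely (\ref{Ures}) and the second sequence in (\ref{Shortres}), into the standard long-exact-sequence estimates for projective dimension, and then argue that these estimates are sharp because all the modules in sight are built out of the ``atomic'' modules $b_jR_{\mathfrak{F}}$, which are pairwise ``independent'' by Lemma \ref{BR}.

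First I would treat the subspace $U$. If $t_{U,j}=0$ for all $j$, then the kernel term in (\ref{Ures}) vanishes and $UR_{\mathfrak{F}}\cong P_1^d$ is projective, giving $\pd=0$. Otherwise (\ref{Ures}) is a short exact sequence $0\to \bigoplus_j (b_jR_{\mathfrak{F}})^{\oplus t_{U,j}}\to P_1^d\to UR_{\mathfrak{F}}\to 0$ with $P_1^d$ projective and the kernel $K:=\bigoplus_j(b_jR_{\mathfrak{F}})^{\oplus t_{U,j}}$ nonzero and not projective (it contains a nonsplit summand $b_jR_{\mathfrak{F}}$ with $t_{U,j}\neq0$, which by the second sequence in (\ref{Shortres}) has positive projective dimension since its syzygy $V_jR_{\mathfrak{F}}\oplus\bigoplus_l(b_lR_{\mathfrak{F}})^{\oplus t_{jl}}$ is again non-projective). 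Hence $K$ is a first syzygy of $UR_{\mathfrak{F}}$, so $\pd(UR_{\mathfrak{F}})=\pd(K)+1=\max\{\pd(b_jR_{\mathfrak{F}})\mid t_{U,j}\neq0\}+1$, using that $\pd$ of a finite direct sum is the maximum of the summands' projective dimensions. One should note that (\ref{Ures}) is the \emph{minimal} projective cover — $P_1^d\to UR_{\mathfrak{F}}$ has kernel contained in the radical because $K\subseteq \mathop{\mathrm{rad}}(P_1^d)$ (it is spanned by paths through some $b_j$) — which is what guarantees the syzygy is read off correctly; alternatively one invokes the fibre-sequence $\pd$ inequalities together with the fact that $P_1^d$ projective forces $\pd(UR_{\mathfrak{F}})=\pd(K)+1$ whenever $K$ is not projective, and $=0$ when it is.

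For the second part, apply the same reasoning to the sequence $0\to V_iR_{\mathfrak{F}}\oplus\bigoplus_{j=1}^m(b_jR_{\mathfrak{F}})^{\oplus t_{ij}}\to P_2\to b_iR_{\mathfrak{F}}\to 0$ from (\ref{Shortres}). Here $P_2$ is projective, so the middle-kernel term $N:=V_iR_{\mathfrak{F}}\oplus\bigoplus_j(b_jR_{\mathfrak{F}})^{\oplus t_{ij}}$ is a first syzygy of $b_iR_{\mathfrak{F}}$, and $\pd(b_iR_{\mathfrak{F}})=\pd(N)+1$ provided $N$ is not projective. Now $\pd(N)=\max\{\pd(V_iR_{\mathfrak{F}}),\ \pd(b_jR_{\mathfrak{F}})\text{ for }t_{ij}\neq0\}$; but by the first part applied to $U=V_i$ (for which $t_{V_i,j}=\dim(V_i\cap W_j)=t_{ij}$), whenever some $t_{ij}\neq0$ we have $\pd(V_iR_{\mathfrak{F}})=\max\{\pd(b_jR_{\mathfrak{F}})\mid t_{ij}\neq0\}+1$, which strictly dominates each such $\pd(b_jR_{\mathfrak{F}})$; and if all $t_{ij}=0$ then $N=V_iR_{\mathfrak{F}}$ outright. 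Either way $\pd(N)=\pd(V_iR_{\mathfrak{F}})$, hence $\pd(b_iR_{\mathfrak{F}})=\pd(V_iR_{\mathfrak{F}})+1$. (Finiteness of all these projective dimensions — so that ``$+1$'' makes sense and $N$ being non-projective is automatic from $\pd\geq 1$, except in the degenerate all-$t_{ij}=0$ case where $V_iR_{\mathfrak{F}}$ could already be $P_1^k$ — follows because $R_{\mathfrak{F}}=\kk Q_{n,m}/I_{\mathfrak{F}}$ has finite global dimension, which is part of what the surrounding results are establishing; alternatively the two displayed recursions terminate since $\dim V_i = k$ decreases the complexity.)

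\textbf{Main obstacle.} The routine part is the long-exact-sequence bookkeeping; the genuine content is sharpness — ensuring each displayed sequence really computes the syzygy rather than just an upper bound. The key point is that the relevant maps $P_1^d\to UR_{\mathfrak{F}}$ and $P_2\to b_iR_{\mathfrak{F}}$ are \emph{minimal} projective covers (kernels inside the radical), which rests on Lemmas \ref{decompR}, \ref{BR}, \ref{xb} and \ref{L1} — in particular on the direct-sum decompositions there guaranteeing that the various $b_jR_{\mathfrak{F}}$ summands are genuinely independent and non-projective. I expect the write-up to spend most of its words justifying that the kernels in (\ref{Ures}) and (\ref{Shortres}) are the stated direct sums with no hidden splitting, citing Lemma \ref{L1}, and then the projective-dimension conclusion is immediate.
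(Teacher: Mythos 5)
Your proof is correct and follows essentially the same route as the paper: the first claim is read off the short exact sequence (\ref{Ures}) (which the paper has already noted is a minimal projective cover), and the second from the second sequence in (\ref{Shortres}) together with the observation that $\pd(V_iR_{\mathfrak{F}})$, computed by the first part with $U=V_i$, dominates every $\pd(b_jR_{\mathfrak{F}})$ occurring in the syzygy. The only slip is the aside that the syzygy of $b_jR_{\mathfrak{F}}$ is ``again non-projective'' --- when all $t_{jl}=0$ it is the projective module $P_1^k$ --- but non-projectivity of $b_jR_{\mathfrak{F}}$ (and the exactness of the ``$+1$'' in the degenerate case) follows anyway from minimality of the cover $P_2\to b_jR_{\mathfrak{F}}$, which you invoke.
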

\begin{proof} The first statement follows from the short exact sequence (\ref{Ures}). When $t_{U, j}=0$ for all $j,$ we have an isomorphism $UR_{\mathfrak{F}}\cong P_1^d.$
If some $t_{U, j}\ne 0,$ then $\pd(UR_{\mathfrak{F}})=\pd(\oplus_{j=1}^{m} (b_j R_{\mathfrak{F}})^{\oplus t_{U,j}})+1.$ Therefore, we obtain that $\pd(UR_{\mathfrak{F}})=\max\{\pd(b_j R_{\mathfrak{F}})|\; t_{U,j}\ne 0 \}+1.$

The second short sequence of (\ref{Shortres}) implies that $\pd(b_iR_{\mathfrak{F}})=\pd(V_iR_{\mathfrak{F}})+1$ because $\pd(V_i R_{\mathfrak{F}})=\max\{\pd(b_j R_{\mathfrak{F}})|\; t_{ij}\ne 0 \}+1$ is bigger than
$\pd(b_j R_{\mathfrak{F}})$ for every $j$ with $t_{ij}\ne 0.$
\end{proof}

To any algebra $R_{\mathfrak{F}} $ we attach a quiver $\Gamma_{\mathfrak{F}},$ which encodes intersections of the subspaces in the family $\mathfrak{F}$ and allows us to construct  minimal resolutions for modules.
Any such quiver $\Gamma_{\mathfrak{F}}$ has $2m$ vertices $b_1,\cdots, b_m, v_1,\cdots, v_m.$ Numbers of arrows between vertices in this quiver depends on the numbers $t_{ij}=\dim(V_i\cap W_j).$
Namely,  arrows in $\Gamma_{\mathfrak{F}}$ are defined  by the following rule:
\begin{itemize}
\item[(1)] one arrow $b_i\to v_i$ for each $i=1,\dots,m;$
\item[(2)] $t_{ij}$ arrows from $b_i$ to $b_j$ for each pair $i,j;$
\item[(3)] $t_{ij}$ arrows from $v_i$ to $b_j$ for each pair $i,j.$
\end{itemize}

Using the quiver $\Gamma_{\mathfrak{F}},$ we can describe  minimal projective resolutions for the modules $b_iR_{\mathfrak{F}}.$ Denote by $\P_{\Gamma_{\mathfrak{F}}}$ the set of all paths in $\Gamma_{\mathfrak{F}}.$
Let $\P_{b_i}\subset \P_{\Gamma_{\mathfrak{F}}}$ be the subset of all paths $p$ the source of which, $s(p),$ coincides with $b_i.$ For any $i,$ the set $\P_{b_i}$ is divided into  a disjoint union $\bigsqcup_{s\ge0} \P^s_{b_i},$ where $\P_{b_i}^s$ are the sets of all paths of  length $s.$

Iterating the short exact sequences from (\ref{Shortres}) and (\ref{Cres}) and using the combinatorics of the quiver $\Gamma_{\mathfrak{F}},$ we can produce  minimal resolutions of the right modules  $b_i R_{\mathfrak{F}}$
for any $i.$ They have  the following  form:

\[
\cdots\to \bigoplus_{\substack{p\in \P_{b_i}^s \\ t(p)=v_j}} P_1^k \oplus \bigoplus_{\substack{p\in \P_{b_i}^s\\  t(p)=b_j }} P_2\to\cdots \to P_1^k \oplus \bigoplus\limits_{\substack{p\in
\P_{b_i}^1\\ t(p)=b_j}} P_2 \to P_2\to b_iR_{\mathfrak{F}}\to 0.
\]

These resolutions and Proposition  \ref{prdbR} imply the following theorem.

\begin{theorem}\label{fgdim} Let $Q_{n,m}$ and $\mathfrak{F}$ be as above.
 The algebra $R_{\mathfrak{F}}=\kk Q_{n,m}/I_{\mathfrak{F}}$ has finite global dimension if and only if the quiver $\Gamma_{\mathfrak{F}}$ does not have oriented cycles.
Moreover, the global dimension of the algebra $R_{\mathfrak{F}}$ is equal to $l+2,$ where $l$ is the maximal length of a path in $\Gamma_{\mathfrak{F}}.$
\end{theorem}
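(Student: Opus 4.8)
The statement relates a homological invariant (finiteness of global dimension and its precise value) to a combinatorial one (absence of oriented cycles in $\Gamma_{\mathfrak F}$ and the maximal path length $l$). The plan is to compute projective dimensions of all the basic right modules in terms of path lengths in $\Gamma_{\mathfrak F}$, and then assemble these into the global dimension via the short exact sequences \eqref{simplshort}, \eqref{Shortres}, \eqref{Cres} and the explicit resolutions displayed just before the theorem. The key recursion to exploit is Proposition \ref{prdbR}: $\pd(b_i R_{\mathfrak F})=\pd(V_i R_{\mathfrak F})+1$ and $\pd(V_i R_{\mathfrak F})=\max\{\pd(b_j R_{\mathfrak F})\mid t_{ij}\neq 0\}+1$, so that $\pd(b_i R_{\mathfrak F})=\max\{\pd(b_j R_{\mathfrak F})\mid t_{ij}\neq 0\}+2$. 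The quiver $\Gamma_{\mathfrak F}$ is designed precisely so that an arrow $b_i\to v_i\to b_j$ records each such dependency, with a length-$2$ jump corresponding to the $+2$, and the arrow $b_i\to v_i$ together with $v_i\to b_j$ (present iff $t_{ij}\neq0$) encodes the two-step passage.

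\textbf{Step 1: the ``only if'' direction and finiteness.} First I would observe that $\pd(b_i R_{\mathfrak F})<\infty$ for every $i$ if and only if the recursion above terminates, which happens if and only if there is no sequence $i=i_0, i_1, i_2,\dots$ with $t_{i_s i_{s+1}}\neq 0$ for all $s$ that returns to a previously visited vertex — i.e. if and only if the full subquiver of $\Gamma_{\mathfrak F}$ on the $b$-vertices (equivalently, by construction, $\Gamma_{\mathfrak F}$ itself, since the $v_i$ only sit between $b_i$ and the $b_j$'s) has no oriented cycle. If some $\pd(b_iR_{\mathfrak F})=\infty$, then from the second sequence of \eqref{simplshort}, or directly from $\pd(S_2)$, the algebra has infinite global dimension. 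Conversely, if $\Gamma_{\mathfrak F}$ is acyclic, then induction on the (now finite) longest path out of each $b_i$ shows every $\pd(b_iR_{\mathfrak F})$ is finite, hence so are $\pd(cR_{\mathfrak F})$, $\pd(UR_{\mathfrak F})$ for all $U$, $\pd(S_1)$ and $\pd(S_2)$ by \eqref{simplshort}, \eqref{Shortres}, \eqref{Cres}, and therefore $\gldim R_{\mathfrak F}<\infty$.

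\textbf{Step 2: the exact value.} I would next prove by induction that $\pd(b_iR_{\mathfrak F})=2+\max\{\text{length of a path in }\Gamma_{\mathfrak F}\text{ starting at }b_i\text{ that passes through some }v\}$, carefully normalizing path-length bookkeeping so that the bottom case (some $b_i$ with $t_{ij}=0$ for all $j$, so $b_iR_{\mathfrak F}$ has the resolution $0\to\bigoplus V\text{-free}\to P_1^{\,k}$... actually $V_iR_{\mathfrak F}\cong P_1^{k}$ and $\pd(b_iR_{\mathfrak F})=1$) matches: such a $b_i$ emits only the single arrow $b_i\to v_i$, a path of length $1$, and $\pd=1$. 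Tracking the displayed minimal resolution of $b_iR_{\mathfrak F}$ term by term — its $s$-th term being indexed by length-$s$ paths out of $b_i$, terminating exactly when no longer paths exist — gives $\pd(b_iR_{\mathfrak F})=\max\{s : \P^s_{b_i}\neq\emptyset\}$. Then $\gldim R_{\mathfrak F}=\max(\pd S_1,\pd S_2)$, and by \eqref{simplshort} $\pd S_2=\pd(CR_{\mathfrak F})+1$ with $\pd(CR_{\mathfrak F})=\max_i\pd(b_iR_{\mathfrak F})+1$ (from the second sequence of \eqref{Cres}, provided $C$ actually meets some $W_j$ nontrivially, i.e. $n-k>0$, which holds since $0<k<n$), giving $\pd S_2=\max_i\pd(b_iR_{\mathfrak F})+2$, while $\pd S_1=\pd(BR_{\mathfrak F})+1=\max_i\pd(b_iR_{\mathfrak F})+1$ is smaller. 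Setting $l=\max_i\max\{s:\P^s_{b_i}\neq\emptyset\}$ as the longest path in $\Gamma_{\mathfrak F}$ yields $\gldim R_{\mathfrak F}=l+2$.

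\textbf{Main obstacle.} The delicate point is \emph{minimality} of the displayed resolutions: the formulas in Propositions \ref{pdcR} and \ref{prdbR} use $\pd(\bigoplus_j (b_jR_{\mathfrak F})^{\oplus t_{ij}})=\max_j\pd(b_jR_{\mathfrak F})$, which is automatic, but the claim $\gldim = l+2$ (as opposed to merely $\le l+2$) requires that no cancellation occurs when passing from the syzygy $\bigoplus(b_jR)^{\oplus t_{ij}}$ to its projective cover — i.e. that the maps in the resolution have image inside the radical times the next projective, so that the length of the minimal resolution is exactly the path length, not less. This is where Lemmas \ref{decompR}, \ref{BR}, \ref{xb} and the genericity choices of $V$, $W$, $x_1,\dots,x_t$ (so that the various $V_ib_jW$, $x_sb_j$ really are independent and the sums $b_{j_1}R+\dots+b_{j_l}R$ are direct) do the work; I would verify that each connecting map in the iterated resolution is ``radical'', hence the resolution is minimal, hence its length computes $\pd$ on the nose. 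A secondary check is the edge case analysis ($k$ versus $n-k$, whether some $V_i$ or $W_i$ is trivial), but $0<k<n$ and the standing hypothesis that $\kk$ is infinite make the generic choices of $V,W$ possible, so these are routine.
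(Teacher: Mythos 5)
Your proposal is correct and follows essentially the same route as the paper: use the recursion $\pd(b_iR_{\mathfrak F})=\max\{\pd(b_jR_{\mathfrak F})\mid t_{ij}\ne 0\}+2$ from Proposition \ref{prdbR} to identify $\pd(b_iR_{\mathfrak F})$ with the longest path in $\Gamma_{\mathfrak F}$ starting at $b_i$ (infinite exactly when there is an oriented cycle), and then deduce $\pd S_1=\max_i\pd(b_iR_{\mathfrak F})+1$ and $\gldim R_{\mathfrak F}=\pd S_2=\max_i\pd(b_iR_{\mathfrak F})+2=l+2$ from the sequences (\ref{simplshort}) and (\ref{Cres}). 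Your extra attention to minimality of the iterated resolutions is a reasonable elaboration of what the paper delegates to Propositions \ref{pdcR} and \ref{prdbR} and the displayed minimal resolutions, but it does not change the argument.
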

\begin{proof}
By Proposition  \ref{prdbR}, we have $\pd(b_iR_{\mathfrak{F}})=\pd(V_iR_{\mathfrak{F}})\}+1=\max\{\pd(b_j R_{\mathfrak{F}})|\; t_{i,j}\ne 0 \}+2.$
If there is a cycle in $\Gamma_{\mathfrak{F}},$ then $\pd(b_iR_{\mathfrak{F}})=\pd(b_iR_{\mathfrak{F}})+s$ for some $i$ and $s>0.$ This implies that the projective dimension of the  module
$\pd(b_iR_{\mathfrak{F}})$ is infinite.

On the other hand, if the quiver $\Gamma_{\mathfrak{F}}$ does not have oriented cycles, the projective dimensions of the modules $b_iR_{\mathfrak{F}}$ is equal to the length of the longest path $p$ in $\Gamma_{\mathfrak{F}}$
with the source $s(p)$ equal to $b_i.$ Finally,  short exact sequences (\ref{simplshort}) give us the following equalities for the simple modules:
\[
\pd(S_1)=\max\{\pd(b_j R_{\mathfrak{F}})|\; j \}+1
\quad\text{and}
\quad
\pd(S_2)=\pd(CR_{\mathfrak{F}})+ 1=\max\{\pd(b_j R_{\mathfrak{F}})|\; j \}+2.
\]
Therefore, the global dimension of $R_{\mathfrak{F}}$ is equal to $\pd(S_2)=\max\{\pd(b_j R_{\mathfrak{F}})|\; j \}+2=l+2,$ where $l$ is the maximal length a path in $\Gamma_{\mathfrak{F}}.$
\end{proof}

It is easy to see that the maximal length of a path in $\Gamma_{\mathfrak{F}}$ is an odd number and any odd positive integer up to $2m-1$ can be realized. Thus, we obtain the following corollary.
\begin{corollary}
For any $1\le s\le m,$ there is a family $\mathfrak{F}=\{V_1,\ldots, V_m; W_1,\ldots, W_m\}$ of subspaces of the space $C$ such that the algebra $R_{\mathfrak{F}}=\kk Q_{n,m}/I_{\mathfrak{F}}$ has global dimension equal to $2s+1.$
\end{corollary}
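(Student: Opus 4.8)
The plan is to combine Theorem~\ref{fgdim} with one explicit family of subspaces. By that theorem, whenever $\Gamma_{\mathfrak{F}}$ has no oriented cycle one has $\gldim R_{\mathfrak{F}}=l+2$, where $l$ is the maximal length of a path in $\Gamma_{\mathfrak{F}}$; so it is enough to exhibit, for each $1\le s\le m$, a family $\mathfrak{F}$ with $\Gamma_{\mathfrak{F}}$ acyclic and $l=2s-1$. I would first record the shape of an extremal path. The only arrow ending at a vertex $v_i$ is $b_i\to v_i$, and every $b_i$ emits $b_i\to v_i$; hence in an acyclic $\Gamma_{\mathfrak{F}}$ a longest path cannot begin or end at a vertex $v_i$ except through $b_i\to v_i$, and if it contains a type-(2) arrow $b_i\to b_j$ then $v_i$ does not already lie on it (otherwise $v_i$ would have $b_i$ as its unique predecessor on the path, forcing a repeated vertex), so that arrow can be replaced by the longer detour $b_i\to v_i\to b_j$. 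Therefore a longest path alternates $b\to v\to b\to\cdots\to b\to v$; in particular $l$ is odd, and since an acyclic quiver has no repeated vertex and there are $m$ vertices of each type, $l\le 2m-1$. So the attainable global dimensions are exactly $3,5,\dots,2m+1$, and the task reduces to realizing each one.

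Fix $s$ with $1\le s\le m$ and take $C=\kk^{n}$ with $n:=m+1$ and $k:=1$, so that $0<k<n$. Let $e_1,\dots,e_n$ be the standard basis, set $W_j=\langle e_i : i\ne j\rangle$ for $j=1,\dots,m$, and set $V_i=\langle v_i\rangle$, where $v_i=\sum_{j\ne i+1}e_j$ (the sum over $1\le j\le n$ with $j\ne i+1$) for $1\le i\le s-1$, and $v_i=\sum_{j=1}^{n}e_j$ for $s\le i\le m$. A one-line check gives $V_i\subseteq W_j$ precisely when $i\le s-1$ and $j=i+1$, so $t_{ij}=\dim(V_i\cap W_j)$ equals $1$ for the pairs $(1,2),(2,3),\dots,(s-1,s)$ and $0$ for all other pairs; in particular $t_{ii}=0$. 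Since $\kk$ is infinite, a generic line $V$ and a generic hyperplane $W$ satisfy $V\oplus W=C$, $V\cap W_i=0$ and $W\cap V_i=0$ for all $i$ (these are open dense conditions), so $R_{\mathfrak{F}}=\kk Q_{n,m}/I_{\mathfrak{F}}$ is indeed in the situation of this section.

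For this $\mathfrak{F}$ the quiver $\Gamma_{\mathfrak{F}}$ has the arrows $b_i\to v_i$ for $1\le i\le m$ together with single arrows $b_i\to b_{i+1}$ and $v_i\to b_{i+1}$ for $1\le i\le s-1$. Ordering the vertices $b_1<v_1<b_2<v_2<\dots<b_m<v_m$, every arrow increases this order, so $\Gamma_{\mathfrak{F}}$ is acyclic; its longest path is $b_1\to v_1\to b_2\to v_2\to\dots\to b_s\to v_s$, of length $2s-1$, since any other path is a terminal segment of this one or is obtained from such a segment by replacing a detour $b_i\to v_i\to b_{i+1}$ with the shortcut $b_i\to b_{i+1}$, hence is shorter. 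Theorem~\ref{fgdim} then gives $\gldim R_{\mathfrak{F}}=(2s-1)+2=2s+1$, as required. The only step that needs care is the middle paragraph, namely verifying that these concrete subspaces realize exactly the intersection numbers $t_{ij}$ and still admit the auxiliary pair $V,W$ demanded in the definition of $R_{\mathfrak{F}}$; once that is in place, the rest is just bookkeeping on top of Theorem~\ref{fgdim}.
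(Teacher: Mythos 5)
Your route is the same as the paper's: deduce the corollary from Theorem \ref{fgdim} by producing a family $\mathfrak{F}$ whose quiver $\Gamma_{\mathfrak{F}}$ is acyclic with longest path of length $2s-1$. The paper gives no explicit construction (it only remarks that every odd length up to $2m-1$ can be realized), so your concrete family, the verification of the numbers $t_{ij}$, and the parity/maximality analysis of longest paths (correct, though not actually needed for the corollary) are a reasonable way to fill this in.

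The one genuine defect is a quantifier slip. In Section \ref{Newalg} the quiver $Q_{n,m}$ and the integer $0<k<n$ are fixed at the outset, and the corollary asserts the existence of $\mathfrak{F}$ inside the given space $C$ with $\dim V_i=k$ and $\codim W_i=k$ for those fixed $n$ and $k$. By declaring $n:=m+1$ and $k:=1$ you change the ambient quiver, so as written you prove only the weaker statement in which $n$ and $k$ may also be chosen (i.e.\ the case of $Q_{m+1,m}$ with one-dimensional $V_i$). The repair is immediate and uses nothing beyond your own idea: for arbitrary $n>k\ge 1$ one has $\dim V_i+\dim W_j=n$, so generic members meet trivially; since $\kk$ is infinite, choose vectors $u_1,\dots,u_{s-1}$ and take $V_i$ a generic $k$-dimensional subspace containing $u_i$ for $i\le s-1$, $W_{i+1}$ a generic $(n-k)$-dimensional subspace containing $u_i$, and all remaining $V_i$, $W_j$ generic. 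Then $t_{i,i+1}\ne 0$ precisely for $i\le s-1$ and all other $t_{ij}=0$, so $\Gamma_{\mathfrak{F}}$ is the same quiver as in your special case, and Theorem \ref{fgdim} gives global dimension $2s+1$ for the given $n$, $m$, $k$. With that adjustment the argument is complete.
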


For subspaces $V_i$ and $W_j$ in general position, we have $V_i\cap W_j=0$ for all $1\le i, j\le m.$ In this case, the maximal length of  paths in $\Gamma_{\mathfrak{F}}$ is equal to $1,$ and we obtain the following corollary.

\begin{corollary}
Suppose $V_i\cap W_j=0$ for any $i,j.$ Then $\pd(V_i R_{\mathfrak{F}})=0$ and $\pd(b_i R_{\mathfrak{F}})=1$ for each $i,$ and the minimal projective resolutions of the simple modules have the following forms
\[
0\to P_1^{mk}\to P_2^{m}\to P_1\to S_1\to 0,\qquad
0\to P_1^{mk(n-k)}\to P_2^{m(n-k)}\to P_1^{ n}\to P_2\to S_2\to 0.
\]
In this case, the global dimension of the algebra $R_{\mathfrak{F}}$ is equal to $3.$
\end{corollary}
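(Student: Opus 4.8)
The plan is to specialize all the computations of Subsection~\ref{Newalg} to the hypothesis $V_i\cap W_j=0$, i.e.\ $t_{ij}=\dim(V_i\cap W_j)=0$ for every pair $i,j$, which makes all the intermediate syzygies appearing in Propositions~\ref{pdcR} and~\ref{prdbR} collapse. First I would record that $V_iR_{\mathfrak{F}}\cong P_1^{k}$ for each $i$: since $t_{V_i,j}=\dim(V_i\cap W_j)=0$ for all $j$, the short exact sequence (\ref{Ures}) with $U=V_i$ (so $d=\dim V_i=k$) degenerates to an isomorphism $P_1^{k}\cong V_iR_{\mathfrak{F}}$, hence $\pd(V_iR_{\mathfrak{F}})=0$. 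Substituting $t_{ij}=0$ into the second short exact sequence of (\ref{Shortres}) gives
\[
0\to V_iR_{\mathfrak{F}}\to P_2\to b_iR_{\mathfrak{F}}\to 0,
\]
and since $V_iR_{\mathfrak{F}}\cong P_1^{k}$ is projective while $P_2\to b_iR_{\mathfrak{F}}$ is a projective cover (its kernel $V_i\subset C$ consists of arrows, hence lies in $\rad P_2$), this is already the minimal projective resolution of $b_iR_{\mathfrak{F}}$; in particular $\pd(b_iR_{\mathfrak{F}})=1$. This is also the content of the last assertion of Proposition~\ref{prdbR}.

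Next I would splice. For $S_1$, Lemmas~\ref{decompR} and~\ref{BR} give $BR_{\mathfrak{F}}\cong\bigoplus_{i=1}^{m}b_iR_{\mathfrak{F}}$; taking the direct sum of the $m$ resolutions $0\to P_1^{k}\to P_2\to b_iR_{\mathfrak{F}}\to 0$ and splicing into the first sequence of (\ref{simplshort}) yields
\[
0\to P_1^{mk}\to P_2^{m}\to P_1\to S_1\to 0 .
\]
For $S_2$, I would combine the second sequence of (\ref{Cres}), namely $0\to\bigoplus_{i=1}^{m}(b_iR_{\mathfrak{F}})^{\oplus(n-k)}\to P_1^{n}\to CR_{\mathfrak{F}}\to 0$, with the resolution $0\to P_1^{mk(n-k)}\to P_2^{m(n-k)}\to\bigoplus_{i=1}^{m}(b_iR_{\mathfrak{F}})^{\oplus(n-k)}\to 0$ obtained by summing $(n-k)$ copies of the resolution of each $b_iR_{\mathfrak{F}}$, and splice the result into the second sequence of (\ref{simplshort}), obtaining
\[
0\to P_1^{mk(n-k)}\to P_2^{m(n-k)}\to P_1^{n}\to P_2\to S_2\to 0 .
\]
Each of the three constituent short exact sequences used is a minimal projective cover (the connecting maps have image consisting of paths of length $\ge 1$, hence lying in $\rad R_{\mathfrak{F}}$), so the spliced complexes are genuinely minimal and no summands cancel.

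Consequently $\pd(S_1)=2$ and $\pd(S_2)=3$, so $\gldim R_{\mathfrak{F}}=\max\{\pd S_1,\pd S_2\}=3$; alternatively the value $3$ is immediate from Theorem~\ref{fgdim}, since when all $t_{ij}=0$ the quiver $\Gamma_{\mathfrak{F}}$ has only the arrows $b_i\to v_i$, has no oriented cycle, and its longest path has length $l=1$, whence $\gldim R_{\mathfrak{F}}=l+2=3$. The only load-bearing (but routine) points are the bookkeeping of multiplicities through the two splices and the verification that the maps between projectives really have image in $\rad R_{\mathfrak{F}}$, so that the resolutions are minimal with no cancellation; under the standing hypothesis $V_i\cap W_j=0$ this is guaranteed because $B,C\subset\rad R_{\mathfrak{F}}$ and each $V_iR_{\mathfrak{F}}$ is honestly free of rank $k$.
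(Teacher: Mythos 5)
Your argument is correct and matches the paper's intended derivation: the corollary is stated there as an immediate specialization of the sequences (\ref{simplshort}), (\ref{Shortres}), (\ref{Ures}), (\ref{Cres}) and of Proposition \ref{prdbR} and Theorem \ref{fgdim} to the case $t_{ij}=0$, which is exactly what you carry out. Your splicing, multiplicity bookkeeping, and minimality check (images in the radical, $b_iR_{\mathfrak{F}}$ not projective) fill in the routine details correctly, so there is nothing to object to.
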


\begin{example} {\rm
 Let $n=m\ge 2$ and $k=1,$ i.e. $\dim V_i=1.$ Let us  choose some elements $a_i\in C,$ for which $V_i=\langle a_i\rangle.$
Define $W_i\subset C$ as the subspaces of codimension $1$ generated by the elements $\{ a_n, \dots, a_{i+1}, a_i-a_1,\dots, a_2-a_1\}.$
 The algebras $R_{\mathfrak{F}}$ obtained in this way are exactly those constructed in \cite{KK}.
 The global dimension of these algebras is equal to $2m+1.$
 }
\end{example}

We now describe the algebras $R_{\mathfrak{F}}$ as iterated twisted tensor products of certain already known algebras.
We assume that $R_{\mathfrak{F}}$ has finite global dimension, i.e. it is smooth.

First, consider the case $m=1,$ i.e. the case where the family $\mathfrak{F}$ consists of two subspaces $V_1$ and $W_1.$
The algebra $R_{\mathfrak{F}}$ has finite global dimension if and only if $V_1\cap W_1=0.$  Denote by $K(V_1)$ and $K(W_1)$ the Kronecker subalgebras of $R_{\mathfrak{F}},$
generated by the subspaces $V_1$ and $W_1,$ respectively.
A simple direct calculation gives us that any smooth algebra $R_{\mathfrak{F}}$ for $m=1$ can be represented as a twisted tensor product of the form
$K(V_1)\otimes^{\mathbf{v}}_S K_{1}^{\op}\otimes^{\mathbf{v}}_S K(W_1).$ Thus, in this case, the algebra $R_{\mathfrak{F}}$ is a generalized Green algebra of the form $G_{\langle 0, k, 1, (n-k) \rangle}$
as in (\ref{gGreen}).

Now, consider the general case. Since $\Gamma_{\mathfrak{F}}$ does not have oriented cycles, it is a directed quiver. Hence, we may suppose, changing the numbering if necessary, that $V_m\cap W_i=0$ for all $i=1, \dots, m.$
Consider the family $\mathfrak{G},$ which consists of the subspaces $V_i, W_i$ with $i< m,$  and take the algebra $R_{\mathfrak{G}}.$
There is a natural embedding of the Kronecker algebra $K(V_m)$ into  $R_{\mathfrak{G}}$ that is given by the embedding  $V_m\subset C.$ Thus,  the algebra $R_{\mathfrak{G}}$ can be considered as a $K(V_m)$\!--ring.

\begin{proposition}\label{RleftK}
The algebra $R_{\mathfrak{G}}$ is  a projective left $K(V_m)$\!--module.
\end{proposition}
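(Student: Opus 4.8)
The plan is to use the idempotents $e_1,e_2$, which lie in both $K(V_m)$ and $R_{\mathfrak{G}}$. Right multiplication by $e_1$ and $e_2$ commutes with the left $K(V_m)$\!--action, so $R_{\mathfrak{G}}=R_{\mathfrak{G}}e_1\oplus R_{\mathfrak{G}}e_2$ is a direct sum decomposition of left $K(V_m)$\!--modules, and it suffices to show that each summand is projective. Recall that $K(V_m)$ is the path algebra of a Kronecker quiver with arrow space $V_m$ and no relations; as a $\kk$\!--vector space $K(V_m)=\langle e_1\rangle\oplus\langle e_2\rangle\oplus V_m$, its indecomposable projective left modules are $K(V_m)e_1=\langle e_1\rangle\oplus V_m$ and $K(V_m)e_2=\langle e_2\rangle$, and a left $K(V_m)$\!--module $M$ — equivalently, the pair of spaces $e_1M,\,e_2M$ together with the structure map $\mu_M\colon e_1M\otimes_{\kk}V_m\to e_2M$, $x\otimes v\mapsto v\cdot x$ (left multiplication by $v\in V_m$ carries $e_1M$ into $e_2M$) — is projective as soon as $\mu_M$ is injective: a splitting $e_2M=\Im\mu_M\oplus N$ identifies such an $M$ with $\bigl(K(V_m)e_1\bigr)^{\oplus\dim e_1M}\oplus\bigl(K(V_m)e_2\bigr)^{\oplus\dim N}$. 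Thus everything reduces to checking injectivity of $\mu_M$ for $M=R_{\mathfrak{G}}e_1$ and for $M=R_{\mathfrak{G}}e_2$.

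To compute these maps I would invoke the vector space decomposition of $R_{\mathfrak{G}}$ from Lemma \ref{decompR}, applied to the family $\mathfrak{G}$, so that all internal sums run over $j=1,\dots,m-1$. For $M=R_{\mathfrak{G}}e_1$ this gives $e_1M=\langle e_1\rangle\oplus\bigoplus_{j}b_jW$ and $e_2M=C\oplus\bigoplus_{j}Vb_jW$, with $\mu_M$ sending $e_1\otimes v\mapsto v\in C$ and $b_jw\otimes v\mapsto vb_jw$; for $M=R_{\mathfrak{G}}e_2$ it gives $e_1M=\bigoplus_{j}\langle b_j\rangle$ and $e_2M=\langle e_2\rangle\oplus\bigoplus_{j}Vb_j$, with $\mu_M$ sending $b_j\otimes v\mapsto vb_j$. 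In each case $\mu_M$ is block diagonal for the displayed direct sums, so it is enough to show that each non-trivial block — $v\mapsto vb_j$ from $V_m$ to $Vb_j$, and $v\otimes b_jw\mapsto vb_jw$ from $V_m\otimes b_jW$ to $Vb_jW$ — is injective.

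This is where the hypotheses come in. We have relabelled the $b$\!--arrows so that $V_m\cap W_j=0$ for every $j<m$; since $\dim V_m=k$ is exactly the codimension of $W_j$ in $C$, the composite $V_m\hookrightarrow C\twoheadrightarrow C/W_j$ is then an isomorphism, and as the relation $wb_j=0$ in $R_{\mathfrak{G}}$ holds precisely for $w\in W_j$, this shows $v\mapsto vb_j$ is an isomorphism of $V_m$ onto $Vb_j$. Similarly $b_jv'=0$ holds precisely for $v'\in V_j$, and $W\cap V_j=0$ with $\dim W=n-k=\dim(C/V_j)$, so $w\mapsto b_jw$ is an isomorphism $W\xrightarrow{\sim}b_jW$; since $vb_jw$ depends only on $v$ modulo $W_j$ and on $w$ modulo $V_j$, the map $v\otimes b_jw\mapsto vb_jw$ becomes an isomorphism $V_m\otimes b_jW\xrightarrow{\sim}Vb_jW$. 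Consequently $\mu_{R_{\mathfrak{G}}e_1}$ is the direct sum of the inclusion $V_m\hookrightarrow C$ and the isomorphisms $V_m\otimes b_jW\xrightarrow{\sim}Vb_jW$, and $\mu_{R_{\mathfrak{G}}e_2}$ is a direct sum of isomorphisms $V_m\otimes\langle b_j\rangle\xrightarrow{\sim}Vb_j$; both are injective, so $R_{\mathfrak{G}}e_1$ and $R_{\mathfrak{G}}e_2$, and therefore $R_{\mathfrak{G}}$, are projective left $K(V_m)$\!--modules. (One even reads off $R_{\mathfrak{G}}e_1\cong\bigl(K(V_m)e_1\bigr)^{\oplus(1+(m-1)(n-k))}\oplus\bigl(K(V_m)e_2\bigr)^{\oplus(n-k)}$ and $R_{\mathfrak{G}}e_2\cong\bigl(K(V_m)e_1\bigr)^{\oplus(m-1)}\oplus K(V_m)e_2$, but only projectivity is needed.)

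I expect the main obstacle to be bookkeeping rather than anything conceptual: one must be sure that, under the relations of $R_{\mathfrak{G}}$, the summands $Vb_j$ and $Vb_jW$ really do behave like $C/W_j$ and $(C/W_j)\otimes_{\kk}(C/V_j)$ — that they have the dimensions asserted by Lemma \ref{decompR} and that the "pure" elements $vb_j$ and $vb_jw$ are nonzero and span them — for this is precisely what turns the genericity conditions $V_m\cap W_j=0$ and $W\cap V_j=0$ into the isomorphisms above.
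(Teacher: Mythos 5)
Your proof is correct and follows essentially the same route as the paper: both rest on the vector-space decomposition of Lemma \ref{decompR} (with a complement $W$ transverse to all $V_i$) and on identifying the indecomposable projective left $K(V_m)$\!--modules $K(V_m)e_1$ and $K(V_m)e_2$; you merely repackage the paper's explicit grouping of summands into the standard criterion that a representation of the Kronecker-type algebra is projective iff its structure map $V_m\otimes e_1M\to e_2M$ is injective, and your resulting multiplicities agree with the paper's. The only care needed is the point you yourself flag (that $V_mb_j=Vb_j$ and $V_mb_jW=Vb_jW$ with the expected dimensions), which follows from $V_m\oplus W_j=C$ and Lemma \ref{decompR} exactly as you indicate.
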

\begin{proof}
The indecomposable left projective $K(V_m)$\!--modules are $K(V_m)e_1=V_m\oplus\langle e_1 \rangle$ and $K(V_m)e_2=\langle e_2\rangle.$
By Lemma \ref{decompR}, we can represent $R_{\mathfrak{G}}$ as the direct sum of the following spaces
$(\bigoplus_{j=1}^{m-1} V_m b_j W\oplus \bigoplus_{j=1}^{m-1} b_j W ),$
$ (\bigoplus_{j}^{m-1} V_m b_j\oplus \bigoplus_{j}^{m-1} b_j),$
$(V_m\oplus \langle e_1\rangle),$  and $\langle e_2\rangle \oplus W$ for some subspace $W\subset C$ of dimension $n-k,$ for which $W\cap V_i=0$ for all $i.$
Now, the first three spaces are projective left $K(V_m)$\!--modules of type $(K(V_m)e_1)^r,$ while the last one is
isomorphic to the projective module $(K(V_m)e_2)^{(n-k+1)}.$
\end{proof}

The subspace $W_m\subset C$ determines an augmentation $\pi: R_{\mathfrak{G}}\to K(V_m)$  by setting $\pi(W_m)=0.$
We also consider the algebra $K(V_m)\otimes^{\mathbf{v}}_S K_{1}^{\op}.$ It is a $K(V_m)$\!--ring and has a natural augmentation $K(V_m)\otimes^{\mathbf{v}}_S K_{1}^{\op}\to K(V_m).$
\begin{theorem}\label{TwProd} Suppose the quiver $\Gamma_{\mathfrak{F}}$ does not have oriented cycles.
Then the algebra $R_{\mathfrak{F}}$ is isomorphic to the twisted tensor product of the algebras $(K(V_m)\otimes^{\mathbf{v}}_S K_{1}^{\op})$ and $R_{\mathfrak{G}}$ over $K(V_m)$
via the twisting map $\mathbf{v}$ defined by formula (\ref{vtwist}), i.e. we have
$
R_{\mathfrak{F}}\cong (K(V_m)\otimes^{\mathbf{v}}_S K_{1}^{\op})\otimes_{K(V_m)}^{\mathbf{v}} R_{\mathfrak{G}}.
$
\end{theorem}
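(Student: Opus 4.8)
The plan is to verify directly that the claimed isomorphism holds by matching generators, relations, and the multiplication induced by the twisting map $\mathbf{v}$. First I would set up notation: write $A = K(V_m)\otimes^{\mathbf{v}}_S K_1^{\op}$, which is a finite-dimensional $K(V_m)$-ring with semisimple part $S = \kk\times\kk$, generated over $S$ by the arrows of $V_m$ (from $\bone$ to $\btwo$) together with one arrow $b_m$ (from $\btwo$ to $\bone$) coming from the $K_1^{\op}$ factor, subject to $b_m v = 0$ for $v\in V_m$ (this is the defining feature of the $\mathbf v$-twist: $(1\otimes b_m)(v\otimes 1) = \mathbf v(b_m\otimes v) = 0$ since $v\in I_{K(V_m)}$ and $b_m\in I_{K_1^{\op}}$). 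The augmentation $A\to K(V_m)$ sends $b_m\mapsto 0$, with augmentation ideal $I_A = b_m\cdot A$. On the other side, $R_{\mathfrak{G}}$ is a $K(V_m)$-ring via the embedding $V_m\subset C$ (the arrows of $V_m$ sitting inside the $c$-arrows), and by Proposition~\ref{RleftK} it is a projective — in particular flat — left $K(V_m)$-module, so the twisted tensor product $A\otimes^{\mathbf v}_{K(V_m)} R_{\mathfrak{G}}$ is well-defined once one checks that $\mathbf v$ is a legitimate twisting map in the sense of (\ref{fixsides})–(\ref{twist}); this is exactly the content of Construction~\ref{Mtwist} applied to the two $K(V_m)$-rings $A$ and $R_{\mathfrak{G}}$, both of which have $K(V_m)$-augmentations.

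Next I would construct the comparison map. As a vector space $A\otimes^{\mathbf v}_{K(V_m)} R_{\mathfrak{G}} = A\otimes_{K(V_m)} R_{\mathfrak{G}}$, and using the projective decomposition of $R_{\mathfrak{G}}$ as a left $K(V_m)$-module from the proof of Proposition~\ref{RleftK} together with Lemma~\ref{decompR}, one can write down an explicit $\kk$-basis of the tensor product indexed by: the identity, the arrows $b_m$ and $b_1,\dots,b_{m-1}$, the spaces $Vb_j$, the spaces $Vb_jW$ (for $j<m$), the arrows in $C$, and the new products $V_m b_m W$ coming from multiplying a $V_m$-arrow, then $b_m$, then a $W$-arrow. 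Comparing this with the basis of $R_{\mathfrak{F}}$ given by Lemma~\ref{decompR} (now for the full family $\mathfrak F$, whose $V_m$-part and $W_m$-part reintroduce exactly $V_m b_m$, $b_m W$, $V_m b_m W$), one sees the dimensions of the four corner spaces $e_iR e_j$ agree. I would then define $\Phi\colon A\otimes^{\mathbf v}_{K(V_m)} R_{\mathfrak{G}}\to R_{\mathfrak{F}}$ on generators by $i_A$ on $A$ (sending the $V_m$-arrows and $b_m$ to their namesakes in $R_{\mathfrak F}$) and by $i_{R_{\mathfrak{G}}}$ on $R_{\mathfrak{G}}$ (sending $b_1,\dots,b_{m-1}$ and $c\in C$ to themselves), and check it is $K(V_m)$-bilinear hence well-defined on the tensor product.

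The main thing to verify — and the step I expect to be the genuine obstacle — is that $\Phi$ is a \emph{ring} homomorphism, i.e.\ that the multiplication $\mu_{\mathbf v}$ on $A\otimes_{K(V_m)} R_{\mathfrak{G}}$ is carried to the multiplication on $R_{\mathfrak F}$. Concretely, $\mu_{\mathbf v}$ is governed by $\mathbf v\colon R_{\mathfrak G}\otimes_{K(V_m)} A\to A\otimes_{K(V_m)} R_{\mathfrak G}$, and by the formula (\ref{vtwist}) together with Remark~\ref{twistedIdeal} we have $\mathbf v(r\otimes b_m\cdot a') = 0$ whenever $r$ lies in the augmentation ideal of $R_{\mathfrak G}$ over $K(V_m)$, i.e.\ whenever $r$ involves any arrow; only the $V_m$-part of $r$ survives and gets moved past $b_m$ trivially. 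Translated through $\Phi$, this says precisely: in $R_{\mathfrak F}$ one has $c\cdot b_m = 0$ for every $c\in C$ that is not already a $V_m$-arrow (in particular for $c\in W_m$), and $b_m\cdot b_j = 0$, which is exactly what relations (1) and (2)–(3) of $I_{\mathfrak F}$ say once one observes that $W_m$ together with the $W_j$'s and the relation $bcb'=0$ force every such product to vanish. Conversely, the relations of $R_{\mathfrak G}$ (relations (1)–(3) restricted to indices $<m$) plus the relations $b_m v = 0$ ($v\in V_m$) and $b_m W_m = 0$, $W_m b_m = 0$ — which are built into $A$ and into the augmentation $\pi(W_m)=0$ — generate all of $I_{\mathfrak F}$. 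So I would check that $\Phi$ kills no basis element and that the relations on the source map onto a generating set of $I_{\mathfrak F}$, concluding that $\Phi$ is a well-defined ring isomorphism; since both sides carry the zero differential, this is also an isomorphism of DG algebras, and combined with Theorem~\ref{DGfinpr} it exhibits $R_{\mathfrak F}$ as a smooth twisted tensor product. The bookkeeping in the relation-matching — in particular disentangling the three-fold products $V_m b_m W$ and checking that $\mathbf v$ introduces no spurious terms — is where the real work lies; everything else is dimension-counting via Lemma~\ref{decompR}.
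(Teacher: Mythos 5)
Your proposal is correct and follows essentially the same route as the paper: use Proposition \ref{RleftK} and Lemma \ref{decompR} to match the twisted tensor product with $R_{\mathfrak{F}}$ as a vector space via the obvious map on generators, then verify that the relations of $I_{\mathfrak{F}}$ involving $b_m$ are exactly what the twisting map $\mathbf{v}$ and the augmentation $\pi(W_m)=0$ impose. One caution on wording: it is not true in $R_{\mathfrak{F}}$ that $c\cdot b_m=0$ for every $c\in C$ outside $V_m$ — the correct statement (which your preceding sentence in fact gives) is that $c\,b_m$ equals $v\,b_m$ where $v$ is the $V_m$-component of $c$ in the decomposition $C=V_m\oplus W_m$, i.e.\ only the $\Ker\pi$-part is killed.
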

\begin{proof}
For brevity, Denote the algebra $(K(V_m)\otimes^{\mathbf{v}}_S K_{1}^{\op})$ by $K(V_m; 1).$
We have an isomorphism $ K(V_m; 1)\cong K(V_m)\oplus (V_m b_m\oplus \langle b_m\rangle)$ of right $K(V_m)$\!--modules, and $(V_m b_m\oplus \langle b_m\rangle)$ is isomorphic to $S_2^{k+1},$
where $S_2$ is the simple right $K(V_m)$\!--module associated with $e_2.$

Consider the tensor product $ K(V_m; 1)\otimes_{K(V_m)}^{\mathbf{v}} R_{\mathfrak{G}}$ as a vector space.
It is isomorphic to the direct sum of $R_{\mathfrak{G}}$ and the space $(V_m b_m\oplus \langle b_m\rangle)\otimes_{K(V_m)} R_{\mathfrak{G}}.$
Using the decomposition of $R_{\mathfrak{G}}$ as a projective left $K(V_m)$\!--module, described in the proof of Proposition \ref{RleftK}, we obtain that
\[
(V_m b_m\oplus \langle b_m\rangle)\otimes_{K(V_m)} R_{\mathfrak{G}}\cong (V_m b_m\oplus \langle b_m\rangle)\otimes_{\kk} (\langle e_2 \rangle\oplus W)=(V_m b_m\oplus \langle b_m\rangle)\oplus (V_m b_m W\oplus  b_m W),
\]
because the tensor product $S_2\otimes_{K(V_m)} K(V_m)e_1$ is zero.

Applying the decomposition from Lemma \ref{decompR}, we can establish a natural isomorphism from $R_{\mathfrak{F}}$ to  the tensor product $K(V_m; 1)\otimes_{K(V_m)}^{\mathbf{v}} R_{\mathfrak{G}}$ that is an isomorphism of  vector spaces.
Thus, we have to show that the relations $I_{\mathfrak{F}}$  also hold for the twisted tensor product $K(V_m; 1)\otimes_{K(V_m)}^{\mathbf{v}} R_{\mathfrak{G}}.$
Since the isomorphism is compartible with the embeddings of the algebras $R_{\mathfrak{G}}$ and $K(V_m; 1),$
it is sufficient to check only those relations $I_{\mathfrak{F}}$ in which $b_m$ is involved.
We already know that $b_m V_m=0$ just like in the algebra $K(V_m; 1).$
The relations $b_i C b_m=0$ for $i<m$ and  $W_m b_m=0$  hold, because $W_m$ and $b_i C$ belong to the kernel of the augmentation $ \pi: R_{\mathfrak{G}}\to K(V_m).$ Now we should check the relations
$b_m C b_i=0$ for any $i\le m.$ We know that  $b_m V_m=0$ and  $W_i b_i=0$ for all $i.$ By assumption, we have $C=V_m\oplus W_i$ for any $i\le m.$ Hence, for any $i\le m,$
$b_m C b_i=0.$
\end{proof}

\begin{corollary} Any algebra $R_{\mathfrak{F}}$ can be obtained from algebras of type $K_1$ by successively applying  twisted tensor product operations.
\end{corollary}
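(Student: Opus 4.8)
The plan is to argue by induction on the number $m$ of subspace pairs in the family $\mathfrak{F}=\{V_1,\dots,V_m;W_1,\dots,W_m\}$, with Theorem~\ref{TwProd} providing the inductive step. Since we work under the standing hypothesis that $R_{\mathfrak{F}}$ has finite global dimension, Theorem~\ref{fgdim} tells us that the quiver $\Gamma_{\mathfrak{F}}$ has no oriented cycles; the quiver attached to any sub-family obtained by deleting some of the pairs $(V_i,W_i)$ is a full subquiver of $\Gamma_{\mathfrak{F}}$ and is therefore also acyclic, so the hypothesis of Theorem~\ref{TwProd} remains available throughout the induction (and each such sub-algebra again has finite global dimension by Theorem~\ref{fgdim}). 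I will also use two elementary facts without further comment: $K_1^{\op}\cong K_1$, and the fact that iterating $K_p\cong K_1\otimes^{\mathbf{v}}_S K_{p-1}$ expresses every Kronecker algebra $K_p$ as an iterated twisted tensor product of copies of $K_1$ over $S=\kk\times\kk$.

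For the base case $m=1$ I would appeal to the explicit computation recorded just before Proposition~\ref{RleftK}: finiteness of global dimension forces $V_1\cap W_1=0$, and then $R_{\mathfrak{F}}\cong K(V_1)\otimes^{\mathbf{v}}_S K_1^{\op}\otimes^{\mathbf{v}}_S K(W_1)$. Because $K(V_1)\cong K_k$ and $K(W_1)\cong K_{n-k}$, and each of these (as well as $K_1^{\op}\cong K_1$) is an iterated twisted tensor product of $K_1$'s, the base case follows at once.

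For the inductive step, suppose $m>1$ and that the claim holds for all families with strictly fewer than $m$ pairs. As in the proof of Theorem~\ref{TwProd}, renumber the pairs so that $V_m\cap W_i=0$ for every $i$ (possible because $\Gamma_{\mathfrak{F}}$ is directed), let $\mathfrak{G}=\{V_1,\dots,V_{m-1};W_1,\dots,W_{m-1}\}$, and write $K(V_m;1)=K(V_m)\otimes^{\mathbf{v}}_S K_1^{\op}$. Theorem~\ref{TwProd} then gives $R_{\mathfrak{F}}\cong K(V_m;1)\otimes^{\mathbf{v}}_{K(V_m)} R_{\mathfrak{G}}$. The algebra $R_{\mathfrak{G}}$ has $m-1$ pairs and finite global dimension, so by the inductive hypothesis it is an iterated twisted tensor product of $K_1$'s; and $K(V_m;1)$ is a twisted tensor product over $S$ of $K(V_m)\cong K_k$ — itself an iterated twisted tensor product of $K_1$'s — with $K_1^{\op}\cong K_1$. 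Composing these constructions with the final twisted tensor product over $K(V_m)$ presents $R_{\mathfrak{F}}$ in the required form, completing the induction. The argument is essentially a formal assembly, so I do not anticipate a genuine obstacle; the one point to be careful about is keeping track of the base subalgebra over which each twisted tensor product is taken — sometimes $S$, sometimes $K(V_m)$ — so the notion ``obtained from $K_1$'s by successive twisted tensor products'' must be read with an arbitrary already-built subalgebra allowed as base at each stage, which is exactly the output produced above.
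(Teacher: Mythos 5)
Your proof is correct and follows the paper's intended argument: the corollary is stated as an immediate consequence of Theorem~\ref{TwProd} (inductive step over $K(V_m)$), the explicit $m=1$ decomposition $K(V_1)\otimes^{\mathbf{v}}_S K_1^{\op}\otimes^{\mathbf{v}}_S K(W_1)$, and the splitting of Kronecker algebras into iterated twisted products of $K_1$'s. Your added care about acyclicity of $\Gamma_{\mathfrak{G}}$ and about the varying base subalgebra is consistent with the paper and introduces no divergence.
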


\begin{remark}
{\rm
Note that for $m=1,$ the algebra
$R_{\mathfrak{F}}=K(V_1)\otimes^{\mathbf{v}}_S K_{1}^{\op}\otimes^{\mathbf{v}}_S K(W_1)$  can also be represented as a twisted tensor product
$(K(V_1)\otimes^{\mathbf{v}}_S K_{1}^{\op})\otimes_{K(V_1)}^{\mathbf{v}} K(C),$ where the augmentation $K(C)\to K(V_1)$ is given by specifying $W_1\subset C.$
}
\end{remark}

\begin{remark}
{\rm
Theorems \ref{TwProd} and \ref{DGfinpr} provide another proof that the algebra $R_{\mathfrak{F}}$ has finite global dimension when the quiver $\Gamma_{\mathfrak{F}}$ does not have cycles.
}
\end{remark}

\section{Smooth algebras and Grothendieck group}\label{GroGr}

\subsection{Grothendieck group and generalized Green DG algebras}

Let $\dR=(R, \dr)$ be a finite-dimensional DG algebra. Consider the Grothendieck group $K_0(\prf\dR).$
When $\dR$ is smooth, the abelian group $K_0(\prf\dR)$ is free of finite rank and, hence, it is isomorphic to $\ZZ^N$ for some $N\ge 0$ \cite[Cor. 2.21]{Or20}.
For any perfect DG modules $\mE, \mF,$ we denote by $\chi_{\dR}(\mE, \mF)$ the alternating sum
\begin{equation}\label{biform}
\chi_{\dR}(\mE, \mF)=\sum_l (-1)^l \dim_{\kk}\Hom_{\prf\dR}(\mE, \mF[l]),
\end{equation}
which defines the so-called Euler bilinear form on the abelian group $K_0(\prf\dR).$

Let us consider the case where
the semisimple part $S$ of the underlying algebra $R$ is isomorphic to $S_N\cong{\kk\times\cdots\times \kk}.$
Denote by $\{e_1,\ldots, e_N\}$ the complete set of primitive idempotents of the algebra $R.$
Moreover,  assume also that the DG algebra $\dR$ is  $S$\!--split.
This means that there are morphisms of DG algebras  $\epsilon: S\to \dR$ and  $\pi: \dR\to S$ such that $\pi\circ \epsilon=\id_S.$
In particular, $\dr(e_i)=0$ for any $i=1,\ldots, N$ and $d(\rd)\subseteq\rd,$ where $J$ is the radical.

If $\dR$ is smooth, then $K_0(\prf\dR)\cong\ZZ^N,$ and the classes of the simple modules $\mS_i$ form a basis in $K_0(\prf\dR).$ Denote by $\mP_{i}=e_i\dR, i=1,\dots, N$ the semi-projective DG modules.
Their classes also form a basis of $K_0(\prf\dR),$ and we have $\chi_{\dR} (\mP_i, \mS_j)=\delta_{ij}.$ Let $\Chi_{\dR}$ be
the matrix of the bilinear form $\chi_{\dR}$ in the basis $\mP_{i}.$ A direct calculation gives us the following equalities:
\begin{multline}\label{matrel}
(\Chi_{\dR})_{ij}=\chi_{\dR}(\mP_i, \mP_j)=\sum_l (-1)^l \dim_{\kk}\Hom_{\prf\dR}(\mP_i, \mP_j[l])
=\sum_l (-1)^l \dim_{\kk} H^l(e_j\dR e_i)=\\
=\sum_l (-1)^l \dim_{\kk} e_j\dR^l e_i.
\end{multline}
where $\dR^l$ is the degree $l$ component of $\dR.$

\begin{proposition}\label{chiprod}
Let $\dA, \dB$ be  two smooth  finite-dimensional $S$\!--split DG algebras as above, where $S\cong{\kk\times\cdots\times \kk}.$
Let $\dC^{\nabla}=\dA\otimes_S^{\nabla, \tau}\dB$ be a DG twisted tensor product over $S.$ Then, for any $\tau$ and $\nabla,$ the DG algebra $\dC^{\nabla}$ is also $S$\!--split and smooth,   and
 $\Chi_{\dC^{\nabla}}=\Chi_{\dB}\cdot\Chi_{\dA}.$
\end{proposition}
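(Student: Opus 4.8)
The plan is to establish the three assertions in order. For the $S$\!--splitting, I would equip $\dC^{\nabla}$ with the augmentation $\pi:=\pi_B\circ p_B\colon\dC^{\nabla}\to\dB\to S$, where $p_B$ is the projection induced by $\pi_A$ — a morphism of DG algebras by the very definition of the DG twisted tensor product — and $\pi_B$ is the given augmentation of $\dB$; the section is $\epsilon\colon S\to\dC^{\nabla}$, $s\mapsto\epsilon_A(s)\otimes 1=1\otimes\epsilon_B(s)$. Since $d_{\dC^{\nabla}}(a\otimes 1)=\da(a)\otimes 1$ and $\dA$ is $S$\!--split, $\epsilon$ commutes with differentials, and $\pi\circ\epsilon=\pi_B\circ\epsilon_B=\id_S$ because $p_B\circ i_B=\id_{\dB}$. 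In particular the elements $e_i\otimes 1$ are orthogonal idempotents of $\dC^{\nabla}$ summing to $1$; and, once we know $\Ker p_B=\mI_{\dA}\otimes_S\dB$ is nilpotent (see below), they form a complete set of primitive idempotents of $\dC^{\nabla}$.

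For smoothness, I would invoke Theorem~\ref{DGfinpr} with $\dR=S$. The hypotheses are checked as follows: $\dA$ and $\dB$ are regular because they are smooth; $\dB$ is semi-flat as a left DG $S$\!--module because $S$ is semisimple (indeed $\dB$ is a finite direct sum of shifts of the $S e_k$, hence semi-free over $S$); and the ideal $\mI_{\dA}\otimes_S\dB\subset\dC^{\nabla}$ lies in the external radical $(\mJ_{\dC^{\nabla}})_{+}$. For the last point, note that $\mI_{\dA}=\Ker\pi_A$ contains $J_{\dA}$ since $\dA/\mI_{\dA}\cong S$ is semisimple, and in the present setting $S$ is the semisimple part of $\dA$, so $\mI_{\dA}=J_{\dA}$ is nilpotent; since the twisting map sends $\dB\otimes_S\mI_{\dA}$ into $\mI_{\dA}\otimes_S\dB$ (Remark~\ref{twistedIdeal}), one gets $(\mI_{\dA}\otimes_S\dB)^{n}\subseteq\mI_{\dA}^{n}\otimes_S\dB$ in $\dC^{\nabla}$, which vanishes for $n\gg 0$; hence $\mI_{\dA}\otimes_S\dB$ is a nilpotent two-sided ideal and is contained in $J_{\dC^{\nabla}}\subseteq(\mJ_{\dC^{\nabla}})_{+}$. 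Theorem~\ref{DGfinpr} then yields that $\dC^{\nabla}$ is regular; and as $\dB$ is $S$\!--split with $S$ a product of copies of $\kk$, the semisimple DG algebra $\dB/(\mJ_{\dB})_{+}$ has underlying algebra $B/J_B\cong S$, which is separable over $\kk$, so $\dC^{\nabla}$ is in fact smooth. Then $K_0(\prf\dC^{\nabla})$ is free by \cite[Cor. 2.21]{Or20}, and is identified with $\ZZ^{N}$ via the primitive idempotents $e_i\otimes 1$, so the matrix $\Chi_{\dC^{\nabla}}$ is defined.

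For the matrix identity I would argue by computing graded dimensions through formula~(\ref{matrel}): $(\Chi_{\dR})_{ij}=\sum_{l}(-1)^{l}\dim_{\kk}e_j\dR^{l}e_i$. This quantity depends only on $\dR$ as a graded $S$\!--bimodule, hence is insensitive to the twisting map $\tau$ and to $\nabla$; and as such a bimodule $\dC^{\nabla}$ is just $\dA\otimes_S\dB$, with $e_j(a\otimes b)e_i=e_ja\otimes be_i$. Decomposing over the idempotents of $S\cong\prod_k\kk e_k$ gives
\[
e_j(\dC^{\nabla})^{l}e_i\;=\;\bigoplus_{p+q=l}\;\bigoplus_{k}\;\bigl(e_j\dA^{p}e_k\bigr)\otimes_{\kk}\bigl(e_k\dB^{q}e_i\bigr),
\]
so, multiplying by $(-1)^{l}=(-1)^{p}(-1)^{q}$ and summing over $l$, the resulting double sum factors:
\[
(\Chi_{\dC^{\nabla}})_{ij}=\sum_{k}\Bigl(\sum_{p}(-1)^{p}\dim_{\kk}e_j\dA^{p}e_k\Bigr)\Bigl(\sum_{q}(-1)^{q}\dim_{\kk}e_k\dB^{q}e_i\Bigr)=\sum_{k}(\Chi_{\dA})_{kj}(\Chi_{\dB})_{ik},
\]
which is precisely the $(i,j)$ entry of $\Chi_{\dB}\cdot\Chi_{\dA}$.

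I expect the only real subtlety to be bookkeeping: making the observation that the Euler form is a function of the graded $S$\!--bimodule alone — so that the whole computation collapses to the elementary $\otimes_S$\!--decomposition and is manifestly independent of $\tau$ and $\nabla$ — and keeping the index conventions of~(\ref{matrel}) straight so that the factors come out in the order $\Chi_{\dB}\cdot\Chi_{\dA}$ and not the reverse. The one point genuinely requiring an argument is the containment $\mI_{\dA}\otimes_S\dB\subseteq(\mJ_{\dC^{\nabla}})_{+}$, which rests on nilpotency of the augmentation ideal $\mI_{\dA}$ together with the twisting map preserving $\mI_{\dA}\otimes_S\dB$.
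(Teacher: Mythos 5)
Your proposal is correct and follows essentially the same route as the paper's proof: smoothness and $S$\!--splitting via Theorem~\ref{DGfinpr} (using Remark~\ref{twistedIdeal} to see that $\mI_{\dA}\otimes_S\dB=J_{\dA}\otimes_S\dB$ is a nilpotent ideal inside the external radical, with separability of $S\cong\kk\times\cdots\times\kk$ giving smoothness), followed by the computation of $(\Chi_{\dC^{\nabla}})_{ij}$ through formula~(\ref{matrel}) and the idempotent decomposition of $\dA\otimes_S\dB$. Your extra remarks (explicit nilpotency estimate $(\mI_{\dA}\otimes_S\dB)^n\subseteq\mI_{\dA}^n\otimes_S\dB$ and the independence of the graded $S$\!--bimodule count from $\tau$ and $\nabla$) only make explicit what the paper leaves implicit.
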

\begin{proof}
As $\dA$ is $S$\!--split, we have that $\mJ_{\dA -}=\mJ_{\dA+}=\rd_{A}.$ By Remark \ref{twistedIdeal}, the twisting map $\tau$ sends $B\otimes_S \rd_{A}$ to $\rd_{A}\otimes_S B.$
This means that the two-sided ideal $\rd_{A}\otimes_S B\subset \dC^{\nabla}$ is nilpotent and, hence, it is contained in the external radical $(\mJ_{\dC^{\nabla}})_{+}.$
Thus, by Theorem \ref{DGfinpr}, the DG algebra $\dC^{\nabla}$ is also smooth. Moreover, the morphisms of DG algebras $i_A: \dA\to \dC^{\nabla}$ and $p_B: \dC^{\nabla}\to \dB$ induce morphisms
$\epsilon_C: S\to \dC^{\nabla}$ and  $\pi_C: \dC^{\nabla}\to S$ such that $\pi_C\circ \epsilon_C =\id_S.$ Therefore, the DG algebra $\dC^{\nabla}$ is $S$\!--split too.
Let us apply equality (\ref{matrel}) to the DG algebra $\dC^{\nabla}:$
\begin{multline*}
(\Chi_{\dC^{\nabla}})_{ij}=
\sum_l (-1)^l \dim_{\kk} e_j(\dC^{\nabla})^l e_i=\sum_l (-1)^l \dim_{\kk}\left(\bigoplus_{s+t=l} \bigoplus_{r=1}^{N} e_j\dA^s e_r\otimes_{\kk} e_r \dB^t e_i\right)=\\
=\sum_{s,t} (-1)^{s+t} \dim_{\kk}\left(\bigoplus_{r=1}^{N} e_j\dA^s e_r\otimes_{\kk} e_r \dB^t e_i\right)=\sum_{s,t} (-1)^{s+t}\sum_{r=1}^N \dim_{\kk}(e_j\dA^s e_r)\cdot\dim_{\kk} (e_r \dB^t e_i).
\end{multline*}
Changing the summation order leads to the following equations:

\[
(\Chi_{\dC^{\nabla}})_{ij}=\sum_{r=1}^N \left(\sum_{s}(-1)^s \dim_{\kk} e_j\dA^s e_r\right)\cdot \left(\sum_{t}(-1)^t \dim_{\kk} e_r\dB^t e_i\right)=
\sum_{r=1}^N (\Chi_{\dB})_{ir}\cdot (\Chi_{\dA})_{rj}.
\]

Thus, we obtain the required matrix equality $\Chi_{\dC^{\nabla}}=\Chi_{\dB}\cdot\Chi_{\dA}.$
\end{proof}

In the previous section (see Proposition \ref{GreenDG}), we introduced generalized Green DG algebras. Recall that we considered  DG algebras $K_{ij}[d], 1\le i\ne j\le N$  that have only one arrow  from
$i$ to $j$ of degree $d$ in the Jacobson radical, i.e. $J=e_jJe_i\cong\kk.$ Generalized Green DG algebras were defined as iterated twisted tensor products over $S=S_N$  of such DG algebras  with the twisting map given by formula (\ref{vtwist}) from Construction \ref{Mtwist}.

The matrix $\Chi_{K_{ij}[d]}$ is equal to ${\mathtt E}_{ij}^{\epsilon},$ where ${\mathtt E}_{ij}$ is the elementary matrix and $\epsilon=(-1)^d.$ Since the elementary matrices generate the group
$\SL(n,\ZZ),$ we obtain the following corollary.

\begin{corollary}\label{anymatr} For any matrix $\Chi\in\SL(n,\ZZ),$  there is a smooth DG algebra $\dR$ such that
$\Chi_{\dR}=\Chi.$ Moreover, for any $\Chi\in\SL(n,\ZZ),$  there is a generalized Green DG algebra $\dR$ with
$\Chi_{\dR}=\Chi.$
\end{corollary}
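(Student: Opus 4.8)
The plan is to combine the multiplicativity of the Euler-form matrix under DG twisted tensor product (Proposition \ref{chiprod}) with the classical fact that $\SL(n,\ZZ)$ is generated by elementary matrices, taking the DG algebras $K_{ij}[d]$ as elementary building blocks. All the $K_{ij}[d]$ that occur will be taken with one and the same semisimple part $S=S_n=\kk\times\cdots\times\kk$.

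First I would dispose of the trivial case $n=1$: there $\SL(1,\ZZ)=\{(1)\}$ and one may take $\dR=\kk=S_1$ (the empty iterated twisted tensor product), since $K_0(\prf\kk)\cong\ZZ$ and $\chi_{\kk}(\kk,\kk)=1$. So assume $n\ge2$, fix $S=S_n$ with primitive idempotents $e_1,\dots,e_n$, and record the matrices of the building blocks. For $i\ne j$ and $d\in\{0,1\}$ the DG algebra $K_{ij}[d]$ is finite-dimensional, $S$\!--split and smooth (smoothness being the case of a single factor in Proposition \ref{GreenDG}); its only nonzero graded components are $(K_{ij}[d])^0=S$ and $(K_{ij}[d])^d=e_jJe_i\cong\kk$, so formula (\ref{matrel}) shows that $\Chi_{K_{ij}[d]}$ is the matrix with $1$'s on the diagonal and $(-1)^d$ in position $(i,j)$; that is, $\Chi_{K_{ij}[d]}=\mathtt E_{ij}$ when $d$ is even and $\Chi_{K_{ij}[d]}=\mathtt E_{ij}^{-1}$ when $d$ is odd.

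Now, given an arbitrary $\Chi\in\SL(n,\ZZ)$, I would use that the elementary matrices generate $\SL(n,\ZZ)$ to write $\Chi=M_1M_2\cdots M_r$ with each $M_k$ equal to some $\mathtt E_{i_kj_k}$ or $\mathtt E_{i_kj_k}^{-1}$. Choosing for each $k$ a parity $d_k\in\{0,1\}$ so that $\Chi_{K_{i_kj_k}[d_k]}=M_k$, I would form the iterated twisted tensor product over $S$, with the twisting map $\mathbf{v}$ of (\ref{vtwist}), of the algebras $K_{i_kj_k}[d_k]$, nesting them in the order dictated by the identity $\Chi_{\dA\otimes_S^{\nabla,\tau}\dB}=\Chi_\dB\cdot\Chi_\dA$ of Proposition \ref{chiprod} so that the matrices come out in the order $M_1M_2\cdots M_r$. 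Proposition \ref{GreenDG} shows the resulting DG algebra $\dR$ is smooth, and by construction it is a generalized Green DG algebra on $n$ vertices; applying Proposition \ref{chiprod} at each step---legitimate because a DG twisted tensor product of smooth $S$\!--split DG algebras is again smooth and $S$\!--split---gives $\Chi_\dR=M_1M_2\cdots M_r=\Chi$, which proves both assertions simultaneously.

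The computation is essentially routine once Propositions \ref{chiprod} and \ref{GreenDG} and the matrix $\Chi_{K_{ij}[d]}$ are in hand; the only point that really needs care is the bookkeeping of the order of the tensor factors, since the twisted tensor product is not symmetric and Proposition \ref{chiprod} places the matrix of the left factor on the right of the product, so the factors must be arranged in the reverse of the desired matrix order. As the surrounding text does, I take for granted the classical fact that $\SL(n,\ZZ)$ is generated by elementary matrices.
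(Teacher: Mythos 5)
Your proposal is correct and follows essentially the same route as the paper: compute $\Chi_{K_{ij}[d]}=\mathtt{E}_{ij}^{(-1)^d}$ from (\ref{matrel}), use the generation of $\SL(n,\ZZ)$ by elementary matrices, and iterate Proposition \ref{chiprod} (with Proposition \ref{GreenDG} for smoothness) to realize any product of elementary matrices by a generalized Green DG algebra. Your extra care about the factor ordering forced by $\Chi_{\dC^{\nabla}}=\Chi_{\dB}\cdot\Chi_{\dA}$ and the trivial case $n=1$ are fine refinements of the same argument.
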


\subsection{Grothendieck groups of algebras $R_{\mathfrak{F}}$}
Let us now consider the algebras $R_{\mathfrak{F}}$ constructed in  Section \ref{Newalg}. We fix a family ${\mathfrak{F}}$ such that the algebra
$R_{\mathfrak{F}}$ is smooth.
The Grothendieck group $K_0(\prf R_{\mathfrak{F}})$ is isomorphic to $\ZZ^2$, and the classes of the projective modules $P_1, P_2$ form a basis. Denote by $\Chi_{R_{\mathfrak{F}}}$
the matrix of the bilinear form  $\chi_{R_{\mathfrak{F}}}$ in this basis.
Since the algebra $R_{\mathfrak{F}}$ is smooth, the classes of the simple modules $S_1, S_2$ also form a basis of the Grothendieck group, and the matrix of $\chi_{R_{\mathfrak{F}}}$ in this basis is
equal to $(\Chi_{R_{\mathfrak{F}}}^{-1})^t.$ A direct calculation gives us the following matrices:
\[
\Chi_{R_{\mathfrak{F}}}=
\begin{pmatrix}
m(n-k) +1 & n+ mk(n-k)\\
m & mk+1
\end{pmatrix}
\qquad
(\Chi_{R_{\mathfrak{F}}}^{-1})^t=
\begin{pmatrix}
mk +1 & -m\\
-n- mk(n-k) & m(n-k)+1
\end{pmatrix}.
\]

Let us denote by $q_{{\mathfrak{F}}}(\mE)$ the Euler quadratic form $\chi_{R_{\mathfrak{F}}}(\mE, \mE).$ It is an integral binary quadratic form that is equal to
\[
q_{{\mathfrak{F}}}(x, y)=(m(n-k)+1) x^2 + (mk(n-k)+m+n)xy+ (mk+1)y^2
\]
in the basis of the classes of the projective modules.

\begin{proposition}
Let $q_{{\mathfrak{F}}}$ be the Euler quadratic form for the  algebra $R_{\mathfrak{F}}.$ Then
\begin{itemize}
\item[1)]
The discriminant $D(q_{{\mathfrak{F}}})$ is equal to $F^2-4,$ where $F=mk(n-k)+n-m.$
\item[2)]
If $n=2, k=1$ the form $q_{{\mathfrak{F}}}$ is  positive semidefinite for any $m\ge 1,$ and it is equivalent to the form
$q'_{{\mathfrak{F}}}=(m+1)x^2.$

\item[3)]
If $n\ge 3,$ the form $q_{\mathfrak{F}}$ is indefinite for any $n> k\ge 1$ and $m\ge 1.$
In this case, it is equivalent to the form
$
q'_{{\mathfrak{F}}}=(mk+1)x^2 + (F-2k)xy-(k(n-k)-1)y^2,
$
where $F=mk(n-k)+n-m.$

\item[4)]
The quadratic form $q_{{\mathfrak{F}}}$ does not represent $1$ for any $m\ge 1, n>k\ge 1.$
\end{itemize}

\end{proposition}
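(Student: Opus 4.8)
The plan is to reduce every assertion to a computation with the binary form $q_{\mathfrak F}=Ax^2+Bxy+Cy^2$, where $A=m(n-k)+1$, $B=mk(n-k)+m+n$, $C=mk+1$, keeping in mind the two identities $B-2m=F$ and $AC-m(B-m)=\det\Chi_{R_{\mathfrak F}}=1$, the latter being read off from the two displayed matrices. For (1) one uses $AC=1+m(B-m)$ to get $4AC=4+4mB-4m^2$, whence the discriminant is $B^2-4AC=(B-2m)^2-4=F^2-4$. For (2), putting $n=2$, $k=1$ gives $A=C=m+1$ and $B=2(m+1)$, so $q_{\mathfrak F}=(m+1)(x+y)^2$: this is manifestly positive semidefinite, the substitution $(x,y)\mapsto(x-y,y)\in\SL(2,\ZZ)$ carries it to $(m+1)x^2=q'_{\mathfrak F}$, and since $m+1\ge 2$ the value $(m+1)z^2$ is never equal to $1$, which also settles the $n=2$ instance of (4).

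For (3) I would first observe that $k(n-k)\ge n-1\ge 2$ for $n\ge 3$, so $F\ge m(n-1)+n-m=m(n-2)+n\ge 2n-2\ge 4$; then $D=F^2-4$ lies strictly between $(F-1)^2$ and $F^2$, hence it is positive and not a perfect square and $q_{\mathfrak F}$ is indefinite. For the asserted equivalence I would change basis from the classes of the projectives $P_1,P_2$ to the classes of the simples $S_1,S_2$; since $\det\Chi_{R_{\mathfrak F}}=1$ this is a basis change over $\ZZ$, and in the new basis the form becomes $(mk+1)x^2-(F+2m)xy+(m(n-k)+1)y^2$ (its matrix is the displayed $(\Chi_{R_{\mathfrak F}}^{-1})^t$). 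Applying next the integral substitution $x\mapsto x+(n-k)y$ — integral because $F+m-k=(n-k)(mk+1)$ — turns the middle coefficient into $2(mk+1)(n-k)-(F+2m)=F-2k$ and the last coefficient into $1-k(n-k)$, producing exactly $q'_{\mathfrak F}$.

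For (4) with $n\ge 3$ I would use the classical fact that a primitive indefinite binary form of nonsquare discriminant $D$ represents $1$ over $\ZZ$ precisely when it is properly equivalent to the principal form of discriminant $D$, i.e. precisely when its cycle of reduced forms is the principal cycle. The coefficients show $q_{(n,k,m)}(x,y)=q_{(n,n-k,m)}(y,x)$, so we may assume $k\le n-k$ and hence $n-k\ge 2$. Then the inequalities $0<F-2k<\sqrt D$, $\ 2(mk+1)+(F-2k)>\sqrt D$ and $2(mk+1)-(F-2k)<\sqrt D$ all hold (the positivity $F-2k>0$ and the last inequality are exactly where $n-k\ge 2$ enters), so $q'_{\mathfrak F}=(mk+1)x^2+(F-2k)xy-(k(n-k)-1)y^2$ is itself a reduced form, with leading coefficient $mk+1\ge 2$. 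A short computation with the reduction operator shows that for $D=F^2-4$ with $F\ge 4$ the principal cycle has length two, consisting of the reduced forms $x^2+(F-2)xy+(2-F)y^2$ and $(2-F)x^2+(F-2)xy+y^2$. Since $q'_{\mathfrak F}$ is reduced but equals neither of these (its leading coefficient is $\ge 2$, not $1$ and not $2-F<0$) and distinct reduced cycles are inequivalent, $q'_{\mathfrak F}$, and hence $q_{\mathfrak F}$ by (3), does not represent $1$. The one genuinely nonelementary ingredient, and the main obstacle, is this last step: it requires the reduction theory of indefinite binary quadratic forms, and the symmetry reduction to $n-k\ge 2$ is essential — for $n-k=1$ the form $q'_{\mathfrak F}$ need not even be reduced.
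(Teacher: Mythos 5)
Your proof is correct and follows essentially the same route as the paper: parts 1)--3) are done by direct computation (the paper dismisses them as ``direct calculations''), and part 4) uses exactly the paper's strategy, namely Gauss reduction theory for indefinite binary forms of nonsquare discriminant and the comparison of the cycle of $q'_{\mathfrak F}$ with the principal cycle $\{(1,F-2,2-F),\,(2-F,F-2,1)\}$. The one point where you go beyond the paper is the symmetry reduction $q_{(n,k,m)}(x,y)=q_{(n,n-k,m)}(y,x)$, which lets you assume $n-k\ge 2$ before verifying that $q'_{\mathfrak F}=(mk+1,\,F-2k,\,1-k(n-k))$ is reduced, and this step is not cosmetic: the paper asserts reducedness of $q'_{\mathfrak F}$ for all admissible parameters, but for $k=n-1$ this fails --- e.g.\ $(n,k,m)=(3,2,2)$ gives the form $(5,1,-1)$ of discriminant $21$, where $2|a|=10>\sqrt{21}+1$ violates the paper's own reducedness condition, and for $k=n-1,\ m=1$ the middle coefficient $F-2k=(m-1)(k-1)$ is even $0$. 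Your swap of $k$ and $n-k$ is legitimate (representing $1$ is a $\mathrm{GL}_2(\ZZ)$-invariant and $F$, hence $D$, is unchanged), so it repairs this small gap while keeping the same overall argument. Your intermediate identities all check out: $AC-m(B-m)=\det\Chi_{R_{\mathfrak F}}=1$ giving $D=(B-2m)^2-4=F^2-4$; the passage to the basis of simples is a change of basis by $\Chi_{R_{\mathfrak F}}\in\SL(2,\ZZ)$, so the equivalences are proper, as the cycle argument requires; and the unipotent substitution $x\mapsto x+(n-k)y$ indeed turns $(mk+1,\,-(F+2m),\,m(n-k)+1)$ into $q'_{\mathfrak F}$.
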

\begin{proof}
Statements from parts 1), 2), 3) are direct calculations.

Let us discuss part 4). The case $n=2$ is evident, so we assume that $n\ge 3.$ For brevity we will denote the quadratic form $ax^2+bxy+cy^2$ by $(a,b,c).$
An indefinite form $(a, b, c)$ of
discriminant $D > 0$ is called reduced if
$0 < b <\sqrt{D}$ and
$\sqrt{D} - b < 2 | a |<\sqrt{D} + b.$
It is well known that any indefinite form is equivalent to a reduced form
of the same discriminant and the number of reduced forms of a given discriminant
is finite (see, e.g., \cite[Prop. 3.3]{Bu}).
Two reduced forms $(a, b, c)$ and $(a', b', c')$ are called adjacent
if $a'=c$ and $b + b' \equiv 0 (\mod\; 2c).$ It can be shown that there is a unique reduced
form adjacent to the right of any given reduced form.
Thus, the set of reduced forms of a given discriminant
can be partitioned into cycles of adjacent forms (see, \cite[Prop. 3.4]{Bu}).
Finally, two reduced forms are equivalent if and only if they
are in the same cycle (see, \cite[Th. 3.5]{Bu}).
In our case, the principal form, i.e. a reduced form of type $(1,b,c),$ with discriminant $D=F^2-4$ looks like $(1, F-2, 2-F).$
The principal cycle of adjacent forms consists of two forms $(1, F-2, 2-F)$ and $ (2-F, F-2, 1).$
Consider the quadratic form $q'_{{\mathfrak{F}}}=(mk+1, F-2k, 1-k(n-k))$ from 3). It is reduced and it is equivalent to the  form $q_{{\mathfrak{F}}},$
but it does not belong to the principal cycle. Therefore, our form $q_{{\mathfrak{F}}}$ does not represent $1$ for any $m\ge 1, n>k\ge 1.$
\end{proof}

\begin{corollary}\label{noexc}
For any $ m\ge 1$ and $n>k\ge 1,$ the triangulated category $\prf R_{\mathfrak{F}}$ does not have exceptional objects.
\end{corollary}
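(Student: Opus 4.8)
The plan is to deduce this immediately from part 4) of the preceding proposition, using the standard fact that an exceptional object has self-Euler-pairing equal to $1$. Recall that an object $E$ of a $\kk$\!--linear triangulated category is exceptional when $\Hom(E,E)\cong\kk$ and $\Hom(E,E[l])=0$ for all $l\ne 0$. The algebra $R_{\mathfrak{F}}$ is finite-dimensional, hence proper, so for any $X,Y\in\prf R_{\mathfrak{F}}$ the graded space $\bigoplus_{l}\Hom(X,Y[l])$ is finite-dimensional; in particular the Euler form $\chi_{R_{\mathfrak{F}}}$ of (\ref{biform}) is well defined, and if $E$ is exceptional then $\chi_{R_{\mathfrak{F}}}(E,E)=\sum_l(-1)^l\dim_{\kk}\Hom(E,E[l])=1$.

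Next I would transfer this to the Grothendieck group. The form $\chi_{R_{\mathfrak{F}}}$ is additive in each variable over exact triangles, so it factors through $K_0(\prf R_{\mathfrak{F}})$, which (as $R_{\mathfrak{F}}$ is smooth) is free with basis $[P_1],[P_2]$. Writing $[E]=x[P_1]+y[P_2]$ with $x,y\in\ZZ$, the equality above reads $q_{\mathfrak{F}}(x,y)=1$, where $q_{\mathfrak{F}}$ is the integral binary quadratic form computed in the proposition. But part 4) of that proposition states exactly that $q_{\mathfrak{F}}$ does not represent $1$ for any $m\ge 1$ and $n>k\ge 1$. This is a contradiction, so no exceptional object can exist.

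There is essentially no obstacle: the whole substance has been pushed into the arithmetic of the form $q_{\mathfrak{F}}$, which is already handled. The only points requiring a word of care are that properness of $R_{\mathfrak{F}}$ makes the defining alternating sum for $\chi_{R_{\mathfrak{F}}}(E,E)$ a finite sum of finite dimensions, and that this numerical invariant depends only on the class $[E]\in K_0(\prf R_{\mathfrak{F}})$ — both immediate.
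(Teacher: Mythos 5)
Your argument is exactly the one the paper intends: the corollary is stated as an immediate consequence of part 4) of the preceding proposition, since an exceptional object $E$ would satisfy $\chi_{R_{\mathfrak{F}}}(E,E)=1$, i.e. the form $q_{\mathfrak{F}}$ would represent $1$ on the class $[E]\in K_0(\prf R_{\mathfrak{F}})$. Your additional remarks on properness and on $\chi_{R_{\mathfrak{F}}}$ factoring through the Grothendieck group are correct and complete the deduction in the same way.
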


\subsection{Final remarks}

Consider the DG algebra $\dR=\kk[\varepsilon]/\varepsilon^2$ with $\deg \varepsilon=\delta.$ It can be shown that $\dR$ has many different smooth realization with  Grothendieck groups of  rank $2.$
Suppose $F:\prf \dR\hookrightarrow\prf\dE$ is a full embedding, where $\dE$ is a smooth proper DG algebra, and  the right adjoint $G:\prf\dE\to \D_{\fd}(\dR)$
is a Verdier quotient. The objects $F(\dR)$ and  $K\in \prf\dE$ such that $G(K)\cong\kk$ give two elements $r, k\in K_0(\prf\dE)$ with $\chi_{\dE}(r,r)=1+(-1)^{\delta}$ and $ \chi_{\dE}(r,k)=1,$ which generate $\ZZ^2\subseteq K_0(\prf\dE).$ Hence, the rank of $K_0(\prf\dE)$ is at least $2.$
For any $p, q\ge 0,$ the DG algebras
\[
\dE_{[p,q;\delta]}=K_{p}^{\op}\otimes^{\mathbf{v}}_S K_{1}\otimes^{\mathbf{v}}_S  K_{1}^{\op}[\delta]\otimes^{\mathbf{v}}_S K_{q}
\]
provide different examples of  smooth categories $\T=\prf \dE_{[p,q; \delta]}$ with $K_0(\T)\cong\ZZ^2,$ for which there is a full embedding
$\prf \dR\hookrightarrow\T.$ It works because
$\dEnd_{\dE_{[p,q;\delta]}}(P_2)\cong \dR.$

Let us consider two of them: $\dE_{[0,0; \delta]}$ and $\dE_{[0,1; \delta]}.$ Informally, we feel that the first DG algebra is simpler than the second one. But are there invariants allowing this to be strictly determined?
Following \cite{Ro}, we can define the {\sf dimension} of a triangulated category $\T$ as the minimal integer $d\ge 0,$ for which there is a strong generator $E\in \T$
with $\langle E\rangle_{d+1}=\T.$ It can be shown that $\dim \D_{\fd}(\dR)=\dim (\prf \dE_{[0,0; \delta]})=1.$ The dimension of the category $\prf \dE_{[0,1; \delta]}$ is not so easy to calculate, but using the methods of \cite[Sec. 6]{El}, one can show that it is also equal to 1.
We conjecture that the dimension of the category $\prf R_{\mathfrak{F}}$  is equal to 1, at least in the case $n=2.$

To see a difference between $\dE_{[0,0; \delta]}$ and $\dE_{[0,1; \delta]},$ we can try to consider dimension spectra.
Recall that the  {\sf dimension spectrum} $\sigma(\T)\subset\ZZ$ consists of
all $d\ge 0,$ for which there is an object $E\in \T$
with $\langle E\rangle_{d+1}=\T$ and $\langle E\rangle_{d}\ne\T$ (see \cite{Or09}).
It can be shown that $\sigma(\prf\dE_{[0,0; \delta]})=\{1,2\}.$ On the other hand, it is not so difficult to check that $\sigma(\prf\dE_{[0,1; \delta]})$ contains the integer 3.


The final remark is that the dimension of the category $\prf R_{\mathfrak{F}}$ does not exceed  3 for any $\mathfrak{F},$ since the nilpotency index is equal to 4, but at the same time, for any arbitrarily large odd number, there exists $\mathfrak{F}$ such that the spectrum of the algebra $R_{\mathfrak{F}}$ contains this number, since the spectrum contains the global dimension.


\end{document}